\setlist[enumerate, 1]{1\textsuperscript{o}}
\newtheorem{thm}{Theorem}[section]
\newtheorem{lem}[thm]{Lemma}
\newtheorem{prop}[thm]{Proposition}
\newtheorem{coro}[thm]{Corollary}
\theoremstyle{definition} 
\newtheorem{defi}[thm]{Definition}
\newtheorem{definition-lemma}[thm]{Definition-Lemma}
\theoremstyle{remark}
\newtheorem{rmk}[thm]{Remark}
\newtheorem*{ack}{Acknowledgments}
\numberwithin{equation}{section}
\newcommand{\C}{\mathbb{C}}
\newcommand{\R}{\mathbb{R}}
\newcommand{\Z}{\mathbb{Z}}
\newcommand{\Q}{\mathbb{Q}}
\newcommand{\m}{\mathfrak{m}}
\def\P{\mathbb{P}}
\DeclareRobustCommand{\O}{\mathcal{O}}
\DeclareMathOperator{\ord}{ord}
\DeclareMathOperator{\lct}{lct}
\newcommand{\B}{\mathbf B}
\def\Bs{\operatorname{Bs}}
\def\Supp{\operatorname{Supp}}
\newcommand{\floor}[1]{\left\lfloor #1 \right\rfloor}
\let\oldframe\frame
\renewcommand\frame[1][allowframebreaks]{\oldframe[#1]}
\title[On diminished multiplier ideal and the termination of flips]
{On diminished multiplier ideal and the termination of flips}
\begin{document}

\author{Donghyeon Kim}
\address{Department of mathematics, Yonsei University, 50 Yonsei-Ro, Seodaemun-Gu, Seoul 03722, Korea}
\email{whatisthat@yonsei.ac.kr}

\date{\today}
\subjclass[2020]{14E30, 14F18}
\keywords{Diminished multiplier ideal, Termination of flips, Minimal model program}

\begin{abstract}
In this paper, we develop a theory of diminished multiplier ideals on singular varieties which was introduced by Hacon, and developed by Lehmann. We prove a result regarding the termination of certain type of flips with scaling of an ample divisor if the Cartier index is bounded, and if $\kappa_{\sigma}(K_X+\Delta)\ge \dim X-1$ holds. The proof uses a theory of diminished multiplier ideal.
\end{abstract}

\maketitle

\section{Introduction}
The goal of this paper is twofold: first, to extend the notion of the diminished multiplier ideal to the singular case, and second, to prove a result on the termination of flips. The former development underpins our proof of the latter.

Hacon introduced the notion of diminished multiplier ideal $\mathcal{J}_{\sigma}(\|D\|)$ and later, Lehmann developed a theory of $\mathcal{J}_{\sigma}(\|D\|)$ on smooth varieties (cf. \cite{Hac04}, \cite{Leh14}). Asymptotic multiplier ideals $\mathcal{J}(\|D\|)$ are defined for big or effective divisors, and can be considered as a ``continuous" extension of $\mathcal{J}(\|D\|)$ for big divisors to pseudo-effective divisor. The first goal of this paper is to further develop a theory of diminished multiplier ideal $\mathcal{J}_{\sigma}(X,\Delta;\|D\|)$ associated to any klt pair $(X,\Delta)$ and any pseudo-effective $\Q$-divisor $D$ on $X$. For the definition of asymptotic multiplier ideals $\mathcal{J}(X,\Delta;\|D\|)$, see Section \ref{Section 4}, and for the definition of diminished multiplier ideals $\mathcal{J}_{\sigma}(X,\Delta;\|D\|)$, see \thref{realdef}. The three main theorems in this direction are presented below:

\begin{thm}\thlabel{imain1}
Let $(X,\Delta)$ be a projective klt pair, and $D$ a big $\Q$-divisor. Then we have
$$ \mathcal{J}_{\sigma}(X,\Delta;\|D\|)=\mathcal{J}(X,\Delta;\|D\|).$$
\end{thm}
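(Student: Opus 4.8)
The plan is to reduce the statement, via the resolution description of both ideals, to an elementary fact about round-ups of monotone sequences. Since $D$ is big, $|pD|\ne\emptyset$ for $p$ large, the asymptotic multiplier ideal $\mathcal{J}(X,\Delta;\|D\|)=\mathcal{J}(X,\Delta;\tfrac1p|pD|)$ stabilizes for $p$ large and divisible, and only finitely many divisorial valuations enter the asymptotic base data; I would fix a log resolution $f\colon Y\to X$ of $(X,\Delta)$ that resolves $\mathfrak{b}_p:=\mathfrak{b}(|pD|)$ for one such large $p$ and is high enough that it computes $\mathcal{J}_\sigma(X,\Delta;\|D\|)$ as in \thref{realdef}. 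On $Y$ one then has
\[
\mathcal{J}(X,\Delta;\|D\|)=f_*\mathcal{O}_Y\!\big(\lceil K_{Y/X}(X,\Delta)-\tfrac1pF_p\rceil\big),\qquad \mathcal{J}_\sigma(X,\Delta;\|D\|)=f_*\mathcal{O}_Y\!\big(\lceil K_{Y/X}(X,\Delta)-N_\sigma(f^*D)\rceil\big),
\]
where $\mathfrak{b}_p\mathcal{O}_Y=\mathcal{O}_Y(-F_p)$, $K_{Y/X}(X,\Delta)=K_Y-f^*(K_X+\Delta)$ has all coefficients $>-1$ by kltness, and $N_\sigma(f^*D)=\sum_E\sigma_E(\|D\|)\,E$.

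Next I would compare the two divisors being rounded up. For every prime divisor $E$ on $Y$ the quantities $\tfrac1m\operatorname{ord}_E(\mathfrak{b}_m)$ are non-increasing along divisibility — because $\mathfrak{b}_m\mathfrak{b}_n\subseteq\mathfrak{b}_{m+n}$ forces $\operatorname{ord}_E(\mathfrak{b}_{km})\le k\operatorname{ord}_E(\mathfrak{b}_m)$ — and converge to $\sigma_E(\|D\|)$; here bigness of $D$ is used both for the identification of this linear-series limit with the Nakayama invariant $\sigma_E$ and to keep the supports of the $F_m$ inside a fixed finite set of divisors for $m\gg0$. In particular $N_\sigma(f^*D)\le\tfrac1mF_m$ for every $m$, and $\tfrac1mF_m\downarrow N_\sigma(f^*D)$. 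From $N_\sigma(f^*D)\le\tfrac1pF_p$ one gets $\lceil K_{Y/X}(X,\Delta)-N_\sigma(f^*D)\rceil\ge\lceil K_{Y/X}(X,\Delta)-\tfrac1pF_p\rceil$, so pushing forward gives the inclusion $\mathcal{J}_\sigma(X,\Delta;\|D\|)\supseteq\mathcal{J}(X,\Delta;\tfrac1p|pD|)=\mathcal{J}(X,\Delta;\|D\|)$ for free.

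For the reverse inclusion, observe that the $\mathbb{Q}$-divisors $K_{Y/X}(X,\Delta)-\tfrac1mF_m$ form a non-decreasing sequence converging to $K_{Y/X}(X,\Delta)-N_\sigma(f^*D)$; and if real numbers $x_m\uparrow L$ then $\lceil x_m\rceil=\lceil L\rceil$ for all $m\gg0$ (true whether or not $L$ is an integer — this is precisely where rounding \emph{up} rather than down is essential). Hence $\lceil K_{Y/X}(X,\Delta)-\tfrac1mF_m\rceil=\lceil K_{Y/X}(X,\Delta)-N_\sigma(f^*D)\rceil$ for $m$ large enough along the divisibility chain, and applying $f_*\mathcal{O}_Y$ yields $\mathcal{J}(X,\Delta;\|D\|)=\mathcal{J}_\sigma(X,\Delta;\|D\|)$. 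The main obstacle I expect is the one swept into the first paragraph: arranging a single high model $f\colon Y\to X$ on which the asymptotic multiplier ideal of $D$ is genuinely computed and on which $\tfrac1mF_m\to N_\sigma(f^*D)$ with controlled support. This fails for a merely pseudoeffective class, and the inputs that make it work for big $D$ — finiteness of the relevant asymptotic base loci $\mathbf{B}_-(D)$ and of the $N_\sigma$-data, and the coincidence of the linear-series and numerical $\sigma$-invariants for big classes (Nakayama) — are the technical heart; I would cite or develop these as lemmas (some may already be available from Section~\ref{Section 4}) rather than reprove them here.
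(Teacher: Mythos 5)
Your proposal follows a genuinely different route from the paper. The paper's proof is valuation-theoretic: it relies on the membership criteria established in Corollary~\ref{mag} and Proposition~\ref{mag2} (namely, $\mathfrak{q}\subseteq\mathcal{J}(X,\Delta;\|D\|)$ iff $\nu(\mathfrak{q})>\nu(\|D\|)-A_{X,\Delta}(\nu)$ for every $\nu\in\mathrm{Val}^*_X$, and the analogous criterion for $\mathcal{J}_-$), together with Lemma~\ref{mag?}(b) ($\nu(\|D\|)=\sigma_\nu(D)$ for big $D$) and Lemma~\ref{KR}. These criteria are themselves consequences of the Jonsson--Musta\c{t}\u{a} type theorem \ref{JM} developed in Section~3, and they allow one to compare $\mathcal{J}$ and $\mathcal{J}_\sigma$ without ever passing to a single resolution that carries all the asymptotic data. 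Your proposal instead works on a fixed log resolution $f\colon Y\to X$ and compares the two multiplier ideals coefficient-by-coefficient via a ceiling-stabilization argument, which is closer in spirit to Lehmann's original argument on smooth varieties.

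The issue is that your argument takes as a starting point the identity
$$\mathcal{J}_\sigma(X,\Delta;\|D\|)=f_*\mathcal{O}_Y\bigl(\lceil K_{Y/X}(X,\Delta)-N_\sigma(f^*D)\rceil\bigr),$$
attributing it to Definition~\ref{realdef}. But that definition gives $\mathcal{J}_\sigma$ as $\bigcup_{\varepsilon>0}\bigcap_{\varepsilon'>0}\mathcal{J}(X,\Delta;\|(1+\varepsilon)D+\varepsilon'A\|)$ --- a union of intersections over perturbed divisors, not a resolution formula. To pass from that definition to the displayed formula on a single fixed $Y$, one must show that all the ideals $\mathcal{J}(X,\Delta;\|(1+\varepsilon)D+\varepsilon'A\|)$ for small $\varepsilon,\varepsilon'$ can be computed on $Y$, and that the relevant fixed parts converge to $N_\sigma(f^*D)$ in a way controlled enough for the ceiling to stabilize to the right value. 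This is not a technicality to cite: for an arbitrary big $\mathbb{Q}$-divisor $D$ there is no single resolution of all the base ideals $\mathfrak{b}(|mD|)$, $\mathfrak{b}(|m((1+\varepsilon)D+\varepsilon'A)|)$, so the snc hypothesis underlying the pushforward formula for each individual $\mathcal{J}(X,\Delta;\tfrac{1}{m}|mD|)$ fails on a fixed $Y$. In fact establishing the displayed identity is essentially a restatement of Theorem~\ref{imain1} in resolution language, so asserting it hides nearly all of the work. You flag this as ``the technical heart'' that you would ``cite or develop as lemmas'' from Section~\ref{Section 4}, but Section~\ref{Section 4} supplies the valuation-theoretic tools precisely so as to avoid having to prove a resolution formula of this kind; the lemmas you would need are not there. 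The round-up observation itself (that $x_m\uparrow L$ forces $\lceil x_m\rceil=\lceil L\rceil$ eventually) and the monotonicity of $\tfrac1m\mathrm{ord}_E(\mathfrak{b}_m)$ along divisibility are both correct and would be genuinely useful once the resolution formula is in place --- but until it is, the argument is incomplete.
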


\thref{imain1} is a generalization of Corollary 6.12 in \cite{Leh14}.

\begin{thm}\thlabel{imain2}
Let $(X,\Delta)$ be a projective klt pair, and $D$ a $\Q$-Cartier $\Q$-divisor on $X$. Then we have that
$$ \B_-(D)=\bigcup_m Z(\mathcal{J}_{\sigma}(X,\Delta;\|mD\|))_{\mathrm{red}}.$$
For the definition of $\B_-$, see \thref{base locus}.
\end{thm}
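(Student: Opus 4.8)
The plan is to reduce the identity to the already understood case of a \emph{big} divisor. Fix an ample $\Q$-Cartier $\Q$-divisor $A$ on $X$. By \thref{base locus} (using that adding an ample divisor does not enlarge the stable base locus, so that $\B_-(D)$ can be computed with the single ample $A$) one has $\B_-(D)=\bigcup_{\epsilon\in\Q_{>0}}\B(D+\epsilon A)$, and we may assume $D$ is pseudo-effective, since otherwise $\B_-(D)=X$, every $\mathcal{J}_\sigma(X,\Delta;\|mD\|)$ is the zero ideal, and both sides equal $X$. Then $m(D+\epsilon A)$ is big and $\Q$-Cartier for every $m\ge 1$ and $\epsilon\in\Q_{>0}$. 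I will use two inputs. First, the asymptotic multiplier ideal description of the stable base locus of a big $\Q$-Cartier $\Q$-divisor $B$ on the klt pair $(X,\Delta)$, namely $Z(\mathcal{J}(X,\Delta;\|B\|))_{\mathrm{red}}\subseteq\B(B)$ and $\B(B)=\bigcup_{k\ge 1}Z(\mathcal{J}(X,\Delta;\|kB\|))_{\mathrm{red}}$ (Section \ref{Section 4}; \cite{Leh14} in the smooth case). Second, monotonicity of asymptotic multiplier ideals under adding an ample class: if $B$ is big and $A'$ is ample, then $\mathcal{J}(X,\Delta;\|B\|)\subseteq\mathcal{J}(X,\Delta;\|B+A'\|)$, because for $p$ sufficiently divisible $pA'$ is basepoint free, so $\mathfrak b(|p(B+A')|)\supseteq\mathfrak b(|pB|)$, and one compares the associated multiplier ideals. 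From the second input, for fixed $m$ the family $\{\mathcal{J}(X,\Delta;\|mD+\delta A\|)\}_{\delta\in\Q_{>0}}$ is non-increasing as $\delta\downarrow 0$, so by \thref{realdef} its stable value $\mathcal{J}_\sigma(X,\Delta;\|mD\|)$ lies in every member: $\mathcal{J}_\sigma(X,\Delta;\|mD\|)\subseteq\mathcal{J}(X,\Delta;\|mD+\delta A\|)$ for all $\delta>0$, with equality for $0<\delta\ll 1$.

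For the inclusion ``$\subseteq$'', I would take $x\notin\B_-(D)$ and fix $m\ge 1$. Choosing $\delta\in\Q_{>0}$ with $\mathcal{J}_\sigma(X,\Delta;\|mD\|)=\mathcal{J}(X,\Delta;\|mD+\delta A\|)$ and setting $B:=mD+\delta A=m(D+\tfrac{\delta}{m}A)$, which is big, the first input gives $Z(\mathcal{J}_\sigma(X,\Delta;\|mD\|))_{\mathrm{red}}=Z(\mathcal{J}(X,\Delta;\|B\|))_{\mathrm{red}}\subseteq\B(B)=\B(D+\tfrac{\delta}{m}A)\subseteq\B_-(D)$. Hence $x\notin Z(\mathcal{J}_\sigma(X,\Delta;\|mD\|))_{\mathrm{red}}$, and since $m$ was arbitrary, $x\notin\bigcup_m Z(\mathcal{J}_\sigma(X,\Delta;\|mD\|))_{\mathrm{red}}$.

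For the inclusion ``$\supseteq$'', I would suppose $x\notin Z(\mathcal{J}_\sigma(X,\Delta;\|mD\|))_{\mathrm{red}}$ for every $m\ge 1$ and fix $\epsilon\in\Q_{>0}$. As $D+\epsilon A$ is big, the first input gives $\B(D+\epsilon A)=\bigcup_{k\ge 1}Z(\mathcal{J}(X,\Delta;\|k(D+\epsilon A)\|))_{\mathrm{red}}$. For each $k$, the containment recorded above, applied with $m=k$ and $\delta=k\epsilon$, reads $\mathcal{J}_\sigma(X,\Delta;\|kD\|)\subseteq\mathcal{J}(X,\Delta;\|kD+k\epsilon A\|)=\mathcal{J}(X,\Delta;\|k(D+\epsilon A)\|)$, hence $Z(\mathcal{J}(X,\Delta;\|k(D+\epsilon A)\|))_{\mathrm{red}}\subseteq Z(\mathcal{J}_\sigma(X,\Delta;\|kD\|))_{\mathrm{red}}$, which does not contain $x$. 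Therefore $x\notin\B(D+\epsilon A)$; as $\epsilon\in\Q_{>0}$ was arbitrary, $x\notin\bigcup_{\epsilon\in\Q_{>0}}\B(D+\epsilon A)=\B_-(D)$.

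The only step that is not a formal manipulation is the big-divisor input on the singular pair $(X,\Delta)$. Its easy half, $Z(\mathcal{J}(X,\Delta;\|B\|))_{\mathrm{red}}\subseteq\B(B)$, holds because at a point outside $\B(B)$ some base ideal $\mathfrak b(|pB|)$ is the unit ideal and $(X,\Delta)$ is klt, which forces the asymptotic multiplier ideal to be trivial there; the reverse inclusion is the standard global generation argument via Nadel vanishing for the pair $(X,\Delta)$, applied to $\mathcal{O}_X(K_X+\Delta+\lceil kB\rceil+(\dim X+1)H)\otimes\mathcal{J}(X,\Delta;\|kB\|)$ with $H$ very ample, after clearing denominators so that every divisor involved is Cartier. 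I expect the bookkeeping with $\Q$-Cartier indices and the passage to sufficiently divisible Cartier multiples to be the main technical nuisance here, rather than a conceptual obstacle.
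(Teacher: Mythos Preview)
Your strategy is the paper's strategy---reduce to big divisors and use the known description of the non-nef locus via asymptotic multiplier ideals there---but two of your stated inputs are off, and each causes a real gap.

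First, what you call $\mathcal{J}_\sigma$ is actually $\mathcal{J}_-$. By \thref{realdef} one has $\mathcal{J}_\sigma(X,\Delta;\|mD\|)=\bigcup_{\varepsilon>0}\mathcal{J}_-(X,\Delta;\|(1+\varepsilon)mD\|)$, whereas the ``stable value of $\{\mathcal{J}(X,\Delta;\|mD+\delta A\|)\}_{\delta>0}$'' that you describe is $\mathcal{J}_-(X,\Delta;\|mD\|)$. One always has $\mathcal{J}_\sigma\subseteq\mathcal{J}_-$ (combine \thref{mag2} with the homogeneity $\sigma_\nu((1+\varepsilon)mD)=(1+\varepsilon)\sigma_\nu(mD)$), so your containment $\mathcal{J}_\sigma(\|kD\|)\subseteq\mathcal{J}(\|kD+\delta A\|)$ used in the second inclusion survives. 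But the \emph{equality} $\mathcal{J}_\sigma(\|mD\|)=\mathcal{J}(\|mD+\delta A\|)$ you invoke in the first inclusion is not available. The fix is the remark following \thref{realdef}: $\mathcal{J}_\sigma(X,\Delta;\|mD\|)=\mathcal{J}(X,\Delta;\|(1+\varepsilon)mD+\varepsilon'A\|)$ for $0<\varepsilon'\ll\varepsilon\ll1$; then your argument runs with $B:=(1+\varepsilon)mD+\varepsilon'A$ in place of $mD+\delta A$.

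Second, your ``first input'' for big $B$ should read $\B_-(B)=\bigcup_k Z(\mathcal{J}(X,\Delta;\|kB\|))_{\mathrm{red}}$, not $\B(B)$; this is exactly \cite[Theorem~1.2 and Corollary~5.2]{CD13}, which the paper quotes. In general $\B_-(B)\subsetneq\B(B)$ even for big $B$, and your own Nadel-vanishing sketch only produces sections of $K_X+\Delta+kB+(\dim X{+}1)H$, which places $x$ outside $\B_-(B)$, not $\B(B)$. This breaks your second inclusion as written: from $x\notin Z(\mathcal{J}(\|k(D+\epsilon A)\|))$ for all $k$ you only obtain $x\notin\B_-(D+\epsilon A)$. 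To finish you then need $\B_-(D)=\bigcup_\epsilon\B_-(D+\epsilon A)$, which is precisely \thref{diminished}. Once both corrections are made, your argument collapses to the paper's: compute $\B_-(D)=\bigcup_m\B_-(D+\tfrac1mA)=\bigcup_{m'}Z(\mathcal{J}_-(X,\Delta;\|m'D\|))_{\mathrm{red}}$ and then use the sandwich $\mathcal{J}_\sigma(\|(m{+}1)D\|)\subseteq\mathcal{J}_-(\|(m{+}1)D\|)\subseteq\mathcal{J}_\sigma(\|mD\|)$ to pass from $\mathcal{J}_-$ to $\mathcal{J}_\sigma$.
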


\thref{imain2} is a generalization of Lemma 6.4 in \cite{Leh14}.

The following can be considered as a generalization of Nadel vanishing theorem.

\begin{thm}\thlabel{imain3}
Let $(X,\Delta)$ be a projective klt pair such that $\Delta$ is $\Q$-Cartier, and suppose that $D$ is a pseudo-effective $\Q$-divisor on $X$. Then we have
$$ H^i(X,\O_X(L)\otimes \mathcal{J}_{\sigma}(X,\Delta;\|D\|))=0\text{ for }i>\dim X-\kappa_{\sigma}(D),$$
where $L$ is a Cartier divisor on $X$ such that $L\sim_{\Q}K_X+\Delta+D$ holds. For the definition of $\kappa_{\sigma}$, see \thref{Kodaira}.
\end{thm}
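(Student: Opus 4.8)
The plan is to reduce the vanishing statement for diminished multiplier ideals to the corresponding Nadel-type vanishing for the *asymptotic* multiplier ideal $\mathcal{J}(X,\Delta;\|D+A\|)$ of a perturbed big divisor, and then pass to a limit. Recall from \thref{imain1} that on the big locus the diminished and asymptotic multiplier ideals agree, and that by the definition in \thref{realdef} one should have $\mathcal{J}_{\sigma}(X,\Delta;\|D\|) = \mathcal{J}(X,\Delta;\|D+\varepsilon A\|)$ (or a limit of such) for an ample $A$ and all sufficiently small rational $\varepsilon>0$ — more precisely the ideals $\mathcal{J}(X,\Delta;\|D+\varepsilon A\|)$ stabilize as $\varepsilon \to 0^+$ to $\mathcal{J}_{\sigma}(X,\Delta;\|D\|)$. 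So it suffices to prove the vanishing $H^i(X,\O_X(L_\varepsilon)\otimes \mathcal{J}(X,\Delta;\|D+\varepsilon A\|))=0$ for $i>\dim X-\kappa_\sigma(D)$ uniformly in small $\varepsilon$, where $L_\varepsilon$ is Cartier with $L_\varepsilon \sim_{\Q} K_X+\Delta+D+\varepsilon A$; since the ideal does not depend on $\varepsilon$ in the limit, we then let $\varepsilon \to 0$ using that cohomology of a fixed sheaf is what we want, after absorbing the $\varepsilon A$ ambiguity by a further adjustment.

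**The core computation** goes as follows. Fix a sufficiently divisible $m$ so that $m(D+\varepsilon A)$ has a divisor in its linear system computing the asymptotic multiplier ideal, and take a log resolution $\pi\colon Y\to X$ of $(X,\Delta)$ together with the relevant base ideal. On $Y$ we write $K_Y + \Delta_Y = \pi^*(K_X+\Delta) + E$ with $\Delta_Y, E$ effective with no common components (klt), and $\mathfrak{b}(|m(D+\varepsilon A)|) \cdot \O_Y = \O_Y(-F_m)$. Then $\mathcal{J}(X,\Delta;\|D+\varepsilon A\|) = \pi_*\O_Y(E - \lfloor \tfrac{1}{m}F_m \rfloor)$ and, by Kawamata–Viehweg vanishing relative to $\pi$ together with the usual local vanishing, $R^j\pi_*\O_Y(\lceil \pi^*(K_X+\Delta+D+\varepsilon A) + E - \tfrac1m F_m \rceil - K_Y) \to$ pushforward is concentrated in degree $0$. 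The point is to control the higher direct images and then apply a global vanishing on $Y$: since $\kappa_\sigma(D) \ge \kappa_\sigma$ controls the numerical dimension of $\pi^*D$ (numerical dimension is a birational invariant, and $\kappa_\sigma(\pi^*(D+\varepsilon A))$ is close to $\kappa_\sigma(\pi^* D)$), one invokes the theorem of Kawamata (or the version for pseudo-effective divisors of numerical dimension $\nu$, cf.\ the work on abundance) giving $H^i(Y, \O_Y(K_Y + N + P)) = 0$ for $i > \dim Y - \nu$ when $N$ is nef with $\nu(N)=\nu$ and $P$ is effective; here $N = \pi^*(D+\varepsilon A)$ is not nef, so instead I would use the Nadel-type statement for the *mobile* part, or bypass this by using that $\kappa_\sigma \ge \dim X - 1$ forces $i$ to be just $\dim X$, i.e.\ only top cohomology, where a degeneration/duality argument is available.

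**A cleaner route**, which I would actually pursue, is the one parallel to Lehmann's Theorem in \cite{Leh14}: express $\mathcal{J}_\sigma(X,\Delta;\|D\|)$ on a fixed model $Y$ as $\pi_*\O_Y(K_{Y} - \lceil \pi^*(K_X+\Delta) - N_\sigma(\pi^*D) \rceil)$-type formula, where $N_\sigma$ is the divisorial Zariski decomposition (negative part) of $\pi^* D$. Then $\pi^* D - N_\sigma(\pi^*D) = P_\sigma(\pi^*D)$ is the positive part, which is movable and has numerical dimension equal to $\kappa_\sigma(D)$. Now $L - (K_X+\Delta) \equiv D$, so on $Y$ we are looking at $H^i(Y, \O_Y(\text{round-up of } \pi^*L + E - \text{something}))$, and the twisting divisor is numerically $P_\sigma(\pi^* D)$ plus an effective exceptional/boundary error term. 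Apply the Nadel vanishing theorem for nef-in-the-movable-sense divisors of numerical dimension $\nu = \kappa_\sigma(D)$: this is exactly the statement $H^i = 0$ for $i > \dim Y - \nu$. Descending via the Leray spectral sequence (using that the higher $R^j\pi_*$ vanish by local vanishing for the rounded divisor) yields the claim on $X$.

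**The main obstacle** I anticipate is making the movable/numerical-dimension vanishing on $Y$ precise: the positive part $P_\sigma(\pi^* D)$ is only a \emph{movable} $\R$-divisor, not semiample or even nef in general, so one cannot directly cite Kawamata–Viehweg. The right tool is a vanishing theorem of the form "if $P$ is pseudo-effective with $\kappa_\sigma(P) = \nu$ and $P = \lim$ of movable classes, then $H^i(X, \O_X(K_X + P + (\text{ample}))) = 0$ for $i > \dim X - \nu$" — which itself is typically proven by perturbing $P$ by a small ample, reducing to the big case, and using that $\mathcal{J}_\sigma$ is insensitive to this perturbation (our \thref{imain1}). So there is a genuine circularity to break: one must set up the perturbation argument so that the vanishing for the perturbed *big* divisor (where classical Nadel vanishing, in the form $i > \dim X - \kappa(D+\varepsilon A) = \dim X - \dim X$, or the Kawamata–Viehweg–Nadel statement for big divisors, applies) is available \emph{before} invoking \thref{imain1}, and then the stabilization of $\mathcal{J}(\|D+\varepsilon A\|)$ as $\varepsilon\to 0$ transfers it to $\mathcal{J}_\sigma$. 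Handling the rounding of $\R$-divisors and checking that the error terms stay effective throughout the perturbation is the routine-but-delicate part; the conceptual crux is this perturbation-and-limit bookkeeping together with citing the correct numerical-dimension Nadel vanishing (in the spirit of Nakayama's or Kawamata's results on numerically trivial-in-codimension-one fibrations).
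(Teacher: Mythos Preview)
Your proposal does not reach a proof, and the gap is precisely the one you flag yourself in the last paragraph. The perturbation-to-big idea collapses at the point where you try to apply Nadel vanishing with the \emph{original} Cartier divisor $L$: writing $\mathcal{J}_{\sigma}(X,\Delta;\|D\|)=\mathcal{J}(X,\Delta;\|(1+\varepsilon)D+\varepsilon'A\|)$ as in \thref{rmk}, the divisor
\[
L-\bigl(K_X+\Delta+(1+\varepsilon)D+\varepsilon'A\bigr)\;\sim_{\Q}\;-\varepsilon D-\varepsilon'A
\]
is \emph{never} nef (it is anti-ample in the $A$-direction), so \thref{Nadel} does not apply. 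Changing $L$ to some $L_\varepsilon\sim_{\Q}K_X+\Delta+D+\varepsilon A$ does give vanishing for all $i>0$, but for a different sheaf $\O_X(L_\varepsilon)$, and there is no mechanism to pass from $H^i(\O_X(L_\varepsilon)\otimes\mathcal{J})$ to $H^i(\O_X(L)\otimes\mathcal{J})$ as $\varepsilon\to 0$. Your ``cleaner route'' via $P_\sigma(\pi^*D)$ requires a vanishing theorem for merely movable $\R$-divisors of given numerical dimension, which is not available and, as you observe, would essentially be equivalent to what you are trying to prove.

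The idea you are missing is a \emph{hyperplane induction}. The paper argues by induction on $\dim X-\kappa_\sigma(D)$: the base case $D$ big is exactly \thref{imain1} plus \thref{Nadel}. For the inductive step one chooses a very general $H'\in|H|$ with $H$ very ample, uses the restriction theorem $\mathcal{J}_\sigma(X,\Delta;\|D\|)|_{H'}=\mathcal{J}_\sigma(H',\Delta|_{H'};\|D|_{H'}\|)$ (cf.\ \cite[Theorem 9.5.16]{Laz04}) and adjunction to get a short exact sequence
\[
0\to \O_X(L)\otimes\mathcal{J}_\sigma\to \O_X(L+H)\otimes\mathcal{J}_\sigma\to \O_{H'}((L+H)|_{H'})\otimes\mathcal{J}_\sigma|_{H'}\to 0,
\]
applies Nadel (big case) to the middle term since $L+H-(K_X+\Delta+D)$ is ample, and applies the inductive hypothesis to the right-hand term using $\kappa_\sigma(D|_{H'})\ge\kappa_\sigma(D)$ (Nakayama). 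The long exact sequence then gives the vanishing for $\O_X(L)\otimes\mathcal{J}_\sigma$. This cleanly sidesteps any need for a movable-divisor vanishing theorem.
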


We now apply the preceding results to the problem of flip termination, a longstanding topic in birational geometry. A common strategy in studying termination is to introduce an appropriate invariant that remains well-behaved along the Minimal Model Program (MMP). For instance, Shokurov introduced the \emph{difficulty function} in \cite{Sho85} and used it to prove the termination of $3$-fold terminal flips. His idea gave rise to the notion of \emph{special termination} (cf. \cite{Fuj07}), and in \cite{Sho04}, he further showed that the \emph{ACC condition} and the lower semi-continuity of \emph{minimal log discrepancy} (mld) together imply the termination of flips.

In our setting, under the assumptions that $\kappa_{\sigma}(K_X+\Delta)\ge \dim X-1$ and that the index of $K_X+\Delta$ is bounded along the MMP, we propose another invariant, namely
$$ h^1(X,\O_X(m(K_X+\Delta)))$$
for some positive integer $m$. Building on Theorems~\ref{imain2} and \ref{imain3}, we use this cohomological invariant to prove the following result on the termination of flips.



\begin{thm}\thlabel{ithm}
Let $(X,\Delta)$ be a $\Q$-factorial projective klt pair, and suppose that
$$ (X_0,\Delta_0):=(X,\Delta)\overset{f_1}{\dashrightarrow} (X_1,\Delta_1)\overset{f_2}{\dashrightarrow} \cdots$$
is a $(K_X+\Delta)$-MMP with scaling of an ample divisor. If
\begin{itemize}
    \item[(a)] all of $f_i$ are flips of type $(1,\dim X-2)$,
    \item[(b)] there is a positive integer $m$ such that $m(K_{X_i}+\Delta_i)$ are Cartier divisors for all $i$, and
    \item[(c)] $\kappa_{\sigma}(K_X+\Delta)\ge \dim X-1$ holds,
\end{itemize}
then the MMP terminates.
\end{thm}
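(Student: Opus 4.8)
The plan is to turn the hypotheses into a cohomological invariant of the pairs $(X_i,\Delta_i)$ that cannot stay in equilibrium along an infinite MMP. Since the MMP produces no Mori fibre space, $K_X+\Delta$ (hence each $K_{X_i}+\Delta_i$) is pseudo-effective; $\Q$-factoriality, kltness and the Cartier index are preserved, and $\kappa_\sigma$ is invariant under the steps of a $(K_X+\Delta)$-MMP, so $\kappa_\sigma(K_{X_i}+\Delta_i)\ge n-1$ for $n:=\dim X$. Replacing $m$ by a multiple (condition (b) is preserved) we may assume $m(K_{X_i}+\Delta_i)\cdot C\le -2$ for every curve $C$ contracted by a flipping contraction in the MMP. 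Put $D_i:=m(K_{X_i}+\Delta_i)$, a Cartier divisor; since $\Delta_i$ is $\Q$-Cartier, \thref{imain2} and \thref{imain3} apply on each $X_i$. Let $\mathcal J_i:=\mathcal J_\sigma(X_i,\Delta_i;\|D_i\|)$ and $W_i:=Z(\mathcal J_i)$. Because $\|D_i\|$ and $\|(m-1)(K_{X_i}+\Delta_i)\|$ coincide, \thref{imain3} with $L=D_i$, $D=(m-1)(K_{X_i}+\Delta_i)$ and $\kappa_\sigma(D)\ge n-1$ yields $H^j(X_i,\O_{X_i}(D_i)\otimes\mathcal J_i)=0$ for $j\ge 2$; by \thref{imain2}, $(W_i)_{\mathrm{red}}=\B_-(K_{X_i}+\Delta_i)$, which is nonempty (the MMP has not stopped) and contains the flipping locus of $f_{i+1}$. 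From $0\to\O_{X_i}(D_i)\otimes\mathcal J_i\to\O_{X_i}(D_i)\to\mathcal Q_i\to 0$ with $\mathcal Q_i$ supported on $W_i$ we get $H^j(X_i,\O_{X_i}(D_i))\cong H^j(W_i,\mathcal Q_i)$ for $j\ge 2$.

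Next I analyse a single flip $X_i\xrightarrow{\phi_i}Z_i\xleftarrow{\phi_i^+}X_{i+1}$ of type $(1,n-2)$. Both $\phi_i,\phi_i^+$ are small (they contract loci of dimensions $1$ and $n-2$, hence of codimension $\ge 2$), so $\phi_{i*}\O_{X_i}(D_i)=\phi^+_{i*}\O_{X_{i+1}}(D_{i+1})=:\mathcal F_i$ on $Z_i$. On the flipped side $D_{i+1}-(K_{X_{i+1}}+\Delta_{i+1})=(m-1)(K_{X_{i+1}}+\Delta_{i+1})$ is $\phi_i^+$-ample, so relative Kawamata--Viehweg vanishing gives $R^q\phi^+_{i*}\O_{X_{i+1}}(D_{i+1})=0$ for $q\ge 1$, whence $H^j(X_{i+1},\O_{X_{i+1}}(D_{i+1}))\cong H^j(Z_i,\mathcal F_i)$ for all $j$. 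On the flipping side the fibres of $\phi_i$ have dimension $\le 1$ and $\phi_i$ contracts its exceptional set to finitely many points, so in the Leray spectral sequence for $\phi_i$ and $\O_{X_i}(D_i)$ the only nonzero groups are $H^p(Z_i,\mathcal F_i)$ and $H^0(Z_i,R^1\phi_{i*}\O_{X_i}(D_i))$, with a single differential $d_2\colon H^0(Z_i,R^1\phi_{i*}\O_{X_i}(D_i))\to H^2(Z_i,\mathcal F_i)$. Combining, one gets
$$h^1(X_i,\O_{X_i}(D_i))=h^1(X_{i+1},\O_{X_{i+1}}(D_{i+1}))+\dim\ker d_2,\qquad h^2(X_i,\O_{X_i}(D_i))=h^2(X_{i+1},\O_{X_{i+1}}(D_{i+1}))-\dim\operatorname{im}d_2,$$
with $\dim\ker d_2+\dim\operatorname{im}d_2=h^0(Z_i,R^1\phi_{i*}\O_{X_i}(D_i))$. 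Thus $h^1(X_i,\O_{X_i}(D_i))$ is non-increasing, $h^2(X_i,\O_{X_i}(D_i))$ is non-decreasing, and the difference satisfies
$$\big(h^1-h^2\big)(X_i,\O_{X_i}(D_i))-\big(h^1-h^2\big)(X_{i+1},\O_{X_{i+1}}(D_{i+1}))=h^0\!\big(Z_i,R^1\phi_{i*}\O_{X_i}(D_i)\big).$$

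I then show the right-hand side is at least $1$, i.e. $R^1\phi_{i*}\O_{X_i}(D_i)\ne 0$. If $C$ is a curve contracted by $\phi_i$ then $\O_{X_i}(D_i)|_C$ has degree $\le -2$ on every component, so $\chi(C,\O_{X_i}(D_i)|_C)<0$ and hence $H^1(C,\O_{X_i}(D_i)|_C)\ne 0$; by the theorem on formal functions — whose transition maps are surjective here because the fibres are one-dimensional (the relevant $H^2$ on a curve vanishes) — the stalk of $R^1\phi_{i*}\O_{X_i}(D_i)$ at $\phi_i(C)$ is nonzero. Consequently $\big(h^1-h^2\big)(X_i,\O_{X_i}(D_i))$ \emph{strictly decreases} at every step of the MMP.

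It remains to bound $h^2(X_i,\O_{X_i}(D_i))$ from above, for then $\big(h^1-h^2\big)(X_i,\O_{X_i}(D_i))\ge -\,(\text{that bound})$ while it strictly decreases, which is impossible unless the MMP stops. (Equivalently: $h^1$ is non-increasing hence eventually constant, forcing $\dim\ker d_2=0$, hence $h^2$ to grow whenever $R^1\phi_{i*}\ne 0$.) By Step one, $h^2(X_i,\O_{X_i}(D_i))=h^2(W_i,\mathcal Q_i)$ with $\operatorname{Supp}\mathcal Q_i=\B_-(K_{X_i}+\Delta_i)$, so this vanishes automatically once $n\le 3$ (where $\dim\B_-\le 1$), recovering a termination statement in that range; for $n\ge 4$ one must control the varieties $W_i=\B_-(K_{X_i}+\Delta_i)$ along the MMP. \emph{This is the main obstacle.} The plan here is to track how $\B_-$ transforms under a type $(1,n-2)$ flip — it passes to its strict transform, possibly enlarged inside the $(n-2)$-dimensional flipped locus — and to use the bounded Cartier index (b), together with $\kappa_\sigma(K_{X_i}+\Delta_i)\ge n-1$, to show that only boundedly many distinct configurations $(W_i,\mathcal Q_i)$ can occur, so that $h^2(W_i,\mathcal Q_i)$ stabilises; combined with the strict descent of $h^1-h^2$ this forces termination.
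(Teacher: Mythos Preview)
Your overall architecture is the paper's: use \thref{imain3} to control $H^{\ge 2}$ of a fixed pluricanonical bundle, analyse the Leray spectral sequence for the flipping contraction to make $h^1$ drop, and get a contradiction from a contracted rational curve. The difference is organisational---the paper shows $d_i:=h^1$ is non-increasing, then at a stationary index derives $R^1\varphi_*\O(D)=0$ and contradicts it on a rational curve; you instead package this as ``$h^1-h^2$ strictly decreases''. Either works once $h^2$ is under control.

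The genuine gap is your ``main obstacle'': you believe that for $n\ge 4$ one must track how $\B_-(K_{X_i}+\Delta_i)$ evolves and bound $h^2(W_i,\mathcal Q_i)$ by some finiteness-of-configurations argument. In fact hypothesis (a) already forces $\dim \B_-(K_{X_i}+\Delta_i)\le 1$ for \emph{every} $i$ and every $n$. The point (this is \thref{diminished2}) is that in an MMP with scaling, every irreducible component of $\B_-(K_{X_i}+\Delta_i)$ is eventually contained in the exceptional locus of some later step; since all steps are flips of type $(1,n-2)$, those exceptional loci are $1$-dimensional, so no component of $\B_-$ can have dimension $\ge 2$. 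With this observation your $W_i$ (which by \thref{imain2} satisfies $(W_i)_{\mathrm{red}}\subseteq \B_-(K_{X_i}+\Delta_i)$, not equality as you wrote) is at most a curve, hence $h^2(W_i,\mathcal Q_i)=0$ and $H^2(X_i,\O_{X_i}(D_i))=0$ outright. Your spectral-sequence computation then gives $d_2=0$, so $h^1$ strictly drops by $h^0(Z_i,R^1\phi_{i*}\O_{X_i}(D_i))\ge 1$ at every step, and you are done; no ``bounded configurations'' argument is needed.

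Two minor points. First, your claim ``$\|D_i\|$ and $\|(m-1)(K_{X_i}+\Delta_i)\|$ coincide'' is not literally correct for the multiplier ideals; just use $\mathcal J_\sigma(X_i,\Delta_i;\|(m-1)(K_{X_i}+\Delta_i)\|)$ as in \thref{imain3} and \thref{corocoro}---you only need its zero locus to sit inside $\B_-$, which \thref{imain2} gives. Second, your $R^1\phi_{i*}\ne 0$ argument via $\chi(C,\cdot)<0$ and formal functions is morally right but delicate on a possibly non-reduced fibre; it is cleaner (and this is what the paper does) to take a \emph{rational} curve $C$ in the fibre, pull back along the normalisation $\nu:\P^1\to C$, and note that $\nu^*\O_{X_i}(D_i)\cong\O_{\P^1}(-2\ell)$ with $\ell\ge 1$ (here the factor $2$ in $D_i=2m(K_{X_i}+\Delta_i)$, or your replacement of $m$ by $2m$, is exactly what guarantees $H^1\ne 0$).
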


Note that the $\Q$-factoriality of $X$ is required only to use \thref{imain3}. Moreover, \cite[Lemma 4.4.1]{Sho96} induces a new proof of the termination of $\Q$-factorial $3$-fold klt flips with scaling of an ample divisor if $\kappa_{\sigma}(K_X+\Delta)\ge 2$. Furthermore, if $\kappa_{\sigma}(K_X+\Delta)=\dim X$ (i.e., $K_X+\Delta$ is big), then the termination of flips with scaling of an ample divisor automatically follows from \cite[Corollary 1.4.2]{BCH+10}.

The result may be useful for investigating another related problems on termination of flips. For instance, it is known by \cite{LX23} that a certain termination of flips is enough to guarantee the termination of a larger class of flips. Moreover, we also expect that \thref{ithm} provide some hint of the proof of the termination of flips (cf. \cite[Theorem 5-1-15]{KMM87}).

A sketch of the proof is given below. Firstly, for a fixed positive integer $m$ such that all $m(K_{X_i}+\Delta_i)$ are Cartier (such an $m$ exists by $(b)$), we show that
$$d_i:=h^1(X_i,\O_{X_i}(2m(K_{X_i}+\Delta_i)))$$
is non-increasing. Then since $d_i\ge 0$ for any $i$, we can find $i$ such that $d_{i-1}=d_i$ holds. Moreover, if $d_{i-1}=d_i$ holds, then by $(a)$ and $(c)$, we obtain a vanishing of higher direct image of the flipping contraction. We can translate the vanishing into the vanishing of cohomology of fibre of flipping contraction, and it leads to a contradiction if the MMP does not terminate. Note that to use the condition $(c)$, we develop a theory of diminished multiplier ideals for singular varieties.

The remaining of the paper is organized as follows. We begin Section 2 by introducing notions and proving lemmas which are used in the paper. In Section 3, we prove a version of \cite[Theorem 7.3]{JM12}. Section 4 is devoted to developing a theory of diminished multiplier ideal for singular varieties, and proving three main theorems on the notion. In Section 5, we prove \thref{ithm}.

\begin{ack}
The author thanks his advisor Sung Rak Choi for his comments, questions, and discussions. He is grateful for his encouragement and support. The author is grateful to Christopher Hacon for his helpful comments on earlier versions of this paper. The authors are grateful to referees for careful reading and suggestions on improving the paper.
\end{ack}

\section{Preliminaries}
The aim of this section is to introduce the notions and lemmas which will be used in later sections. Let us collect the basic notions.

\begin{itemize}
    \item All varieties are integral schemes of finite type over the complex number field $\C$.
    \item For a normal variety $X$, and a boundary $\Q$-Weil divisor $\Delta$ on $X$, we say $(X,\Delta)$ is a \emph{pair} if $K_X+\Delta$ is a $\Q$-Cartier divisor.
    \item Let $(X,\Delta)$ be a pair, and $E$ a prime divisor over $X$. Suppose $f:X'\to X$ is a log resolution of $(X,\Delta)$ in which $E$ is an $f$-exceptional divisor, and $\Delta_{X'}$ is a divisor on $X'$ such that
    $$ K_{X'}+\Delta_{X'}=f^*(K_X+\Delta)$$
    holds. The \emph{log discrepancy} $A_{X,\Delta}(E)$ is defined by
    $$ A_{X,\Delta}(E):=1+\mathrm{mult}_E(-\Delta_{X'}).$$
    Moreover, $(X,\Delta)$ is \emph{klt} if for any prime divisor $E$ over $X$, $A_{X,\Delta}(E)>0$ holds.
    \item For a noetherian scheme $X$, and a closed subscheme $Z\subseteq X$, let us denote by $\mathcal{I}_Z$ the corresponding ideal sheaf of $Z$. Moreover, for an ideal sheaf $\mathcal{I}$ of $X$, $Z(\mathcal{I})$ denotes the corresponding closed subscheme of $X$.
    \item For a normal projective variety $X$, and a Cartier divisor $D$ on $X$, $\mathrm{Bs}(|D|)$ denotes the \emph{base locus} of $|D|$.
    \item Let $X$ be a variety, and $S\subseteq X$ a subset of $X$. A closed subvariety $Z$ of $X$ is an \emph{irreducible component} of $S$ if there is no other closed subvariety $Z'$ of $X$ such that $Z'\subseteq S$, and $Z\subsetneq Z'$.
    \item Let $X$ be a variety. Then $K(X)$ denotes its function field. Moreover, let $\eta\in X$ be a (not necessarily closed) point. Then $K(\eta)$ denotes the function field of $\overline{\{\eta\}}$
\end{itemize}

Let us recall the definition of \emph{numerical Kodaira dimension}.

\begin{defi}[{cf. \cite[2.5 Definition in Chapter V]{Nak04}}] \thlabel{Kodaira}
Let $X$ be a normal projective variety, and $D$ a pseudo-effective $\Q$-divisor on $X$. For an ample divisor $A$ on $X$, we define
$$ \kappa_{\sigma}(D):=\min\left\{i\in \Z\Bigg|\limsup_{n\to \infty}\frac{\dim H^0(X,\O_X(mD+A))}{m^i}<\infty\right\}.$$
The definition does not depend on the choice of $A$.
\end{defi}

For related results on the numerical Kodaira dimension, see \cite[Chapter V]{Nak04}.

\medskip

We recall the definition of negative contractions and flips.

\begin{defi}
Let $X$ be a normal projective variety, and $f:X\dashrightarrow X_0$ a birational map. Let $D$ be a $\Q$-Cartier divisor on $X$. We say that $f$ is a \emph{$D$-negative contraction} if there is a common resolution
$$ 
\begin{tikzcd}
& X' \ar["g"']{ld}\ar["h"]{rd}& \\
X \ar["f"',dashed]{rr}& & X_0
\end{tikzcd}
$$
such that $g^*D=h^*f_*D+E$ for some effective $h$-exceptional divisor $E$.
\end{defi}

\begin{defi}
Let $X$ be a normal projective variety, and $f:X\dashrightarrow X^+$ a birational map. Let $D$ be a $\Q$-Cartier divisor on $X$. We say that $f$ is a \emph{$D$-flip} if there is a diagram
$$
\begin{tikzcd}
X\ar["f",dashed]{rr}\ar["\varphi"']{rd}& &X^+ \ar["\varphi^+"]{ld}\\
& Z&
\end{tikzcd}
$$
such that the Picard number of $\varphi,\varphi^+$ satisfy $\rho(X/Z)=\rho(X^+/Z)=1$, $D$ is anti $\varphi$-ample, and $f_*D$ is $\varphi^+$-ample. We call the diagram above the \emph{flipping diagram} of $f$.
\end{defi}

Note that any $D$-flip is also a $D$-negative contraction by the Negativity lemma (cf. \cite[Lemma 3.39]{KM98}). Let us prove the following lemma:

\begin{lem}\thlabel{forapp}
Let $X$ be a normal projective variety, $D$ a pseudo-effective $\Q$-divisor, and $f:X\dashrightarrow X_0$ a $D$-negative birational contraction. Then $\kappa_{\sigma}(D)=\kappa_{\sigma}(f_*D)$.
\end{lem}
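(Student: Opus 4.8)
The plan is to compare the two numerical Kodaira dimensions via a common resolution and the defining formula in \thref{Kodaira}. Let $g\colon X'\to X$ and $h\colon X'\to X_0$ be a common resolution as in the definition of a $D$-negative contraction, so that $g^*D = h^*(f_*D) + E$ with $E\ge 0$ and $h$-exceptional. First I would record the standard reductions: $\kappa_\sigma$ is a pseudo-effective numerical invariant, it is insensitive to pullback along a birational morphism (i.e. $\kappa_\sigma(g^*D)=\kappa_\sigma(D)$ and $\kappa_\sigma(h^*(f_*D))=\kappa_\sigma(f_*D)$, using that $H^0$ and ampleness of an auxiliary divisor are preserved by pulling back and pushing forward along birational morphisms of normal varieties), and it is monotone in the following weak sense: if $D_1 = D_2 + F$ with $F\ge 0$, then $\kappa_\sigma(D_1)\ge \kappa_\sigma(D_2)$, because $H^0(X',\O(mD_2+A))\hookrightarrow H^0(X',\O(mD_1+A))$ for every $m$ and every ample $A$. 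If these facts are not isolated as lemmas earlier in the paper I would state and prove them in a line each; they are all immediate from the definition or from \cite[Chapter V]{Nak04}.

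Granting these, the inequality $\kappa_\sigma(D) = \kappa_\sigma(g^*D) = \kappa_\sigma(h^*(f_*D)+E) \ge \kappa_\sigma(h^*(f_*D)) = \kappa_\sigma(f_*D)$ is immediate. For the reverse inequality the key point is that the extra term $E$ is $h$-exceptional, so it contributes nothing to global sections relative to $h^*(f_*D)$: more precisely, for a Cartier (or $\Q$-Cartier, after clearing denominators) ample divisor $A_0$ on $X_0$ and $A := h^*A_0$ on $X'$, one has
$$ H^0\big(X',\O_{X'}(\lfloor m g^*D\rfloor + A)\big) \;=\; H^0\big(X',\O_{X'}(\lfloor m h^*(f_*D)\rfloor + A)\big) $$
for all $m$, since $h_*\O_{X'}(\lfloor mE\rfloor')=\O_{X_0}$ for an $h$-exceptional effective divisor and the projection formula lets us absorb $E$. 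Here I must be slightly careful about rounding when $D$ is only a $\Q$-divisor and about the fact that $A$ is merely $h$-nef-and-big rather than ample, but by \thref{Kodaira} the definition of $\kappa_\sigma$ is independent of the auxiliary ample divisor, so I may instead add a genuine ample $A'$ on $X'$ and compare $h^*(f_*D)+E$ with $h^*(f_*D)$ directly using monotonicity in both directions — upper bound from $E\ge 0$, and a matching lower bound from the fact that $E$, being $h$-exceptional, satisfies $h^0(X', \lfloor m h^*(f_*D)\rfloor + A' ) = h^0(X', \lfloor m h^*(f_*D)\rfloor + \lfloor mE\rfloor + A')$ up to a bounded correction, which does not affect the growth order.

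The main obstacle, and the step deserving the most care, is precisely this last comparison: showing that adding the $h$-exceptional effective divisor $E$ does not increase the order of growth of $h^0(X', \lfloor m h^*(f_*D)\rfloor + mE + A)$ in $m$. The clean way is to choose the auxiliary ample divisor on $X_0$, pull it back, and use the Leray spectral sequence / projection formula $h_*\O_{X'}(\lfloor m h^*(f_*D)\rfloor + \lfloor mE\rfloor + A) = \O_{X_0}(m'(f_*D)+A_0)$ (for $m$ divisible enough that $m h^*(f_*D)$ is integral) together with $h_*\O_{X'}(\lfloor mE\rfloor)=\O_{X_0}$, which holds because $E$ is effective and $h$-exceptional on a normal variety. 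This forces equality of the relevant $H^0$'s on the nose, hence equality of the $\limsup$ conditions in \thref{Kodaira}, hence $\kappa_\sigma(f_*D)\ge \kappa_\sigma(g^*D)=\kappa_\sigma(D)$. Combining with the first inequality gives $\kappa_\sigma(D)=\kappa_\sigma(f_*D)$, as claimed. $\qed$
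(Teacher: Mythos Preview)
Your approach is essentially the same as the paper's: take a common resolution with $g^*D=h^*(f_*D)+E$, $E\ge 0$ $h$-exceptional, and use birational invariance of $\kappa_\sigma$ together with the fact that an effective exceptional divisor does not affect $\kappa_\sigma$. The paper simply cites \cite[2.2]{LP20} for these two properties and writes the chain $\kappa_\sigma(D)=\kappa_\sigma(g^*D)=\kappa_\sigma(h^*f_*D+E)=\kappa_\sigma(f_*D)$ in one line, whereas you unpack the argument; your sketch is correct in outline, though the step where you replace the ample test divisor by the big-and-nef pullback $h^*A_0$ is exactly the point that needs the input from Nakayama (namely that $\kappa_\sigma$ may be computed using any big divisor in place of an ample one, cf.\ \cite[Chapter V, Proposition 2.7]{Nak04}), so it is cleaner to cite that rather than the hand-wavy ``bounded correction'' with a genuine ample $A'$.
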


\begin{proof}
Let
$$ 
\begin{tikzcd}
& X' \ar["g"']{ld}\ar["h"]{rd}& \\
X \ar["f"',dashed]{rr}& & X_0
\end{tikzcd}
$$
be a resolution of indeterminacies such that $g^*D=h^*f_*D+E$ for some effective $h$-exceptional divisor $E$. Then by remarks in \cite[2.2]{LP20}, we obtain that
$$
\kappa_{\sigma}(D)=\kappa_{\sigma}(g^*D)=\kappa_{\sigma}(h^*f_*D+E)=\kappa_{\sigma}(f_*D).
$$
\end{proof}

Let us define the \emph{type} of a flip.

\begin{defi}
Let $X$ be a normal variety, $D$ a Cartier divisor on $X$, $f:X\dashrightarrow X^+$ a $D$-flip, and
$$ 
\begin{tikzcd}
X\ar[dashed]{rr}\ar["\varphi"']{rd}& &X^+ \ar["\varphi^+"]{ld}\\
& Z&
\end{tikzcd}
$$
the flipping diagram of $f$. We say that $f$ is of type $(d,d')$ if $d=\dim \mathrm{Exc}(\varphi)$, and $d'=\dim \mathrm{Exc}(\varphi^+)$.
\end{defi}

Note that if a flip is of type $(1,d')$ for some $d'$, then $d'=\dim X-2$ by \cite[Lemma 5-1-17]{KMM87}.

\medskip

The notion of \emph{diminished base locus} plays an important role in proving \thref{ithm}. We first recall the definition of diminished base locus here.

\begin{defi} \thlabel{base locus}
Let $X$ be a normal projective variety, and $D$ a pseudo-effective $\Q$-divisor on $X$.
\begin{itemize}
    \item[(a)] The \emph{stable base locus} of $D$ is
    $$ \B(D)=\bigcap_{m\in \Z_{>0}}\Bs(|mD|).$$
    \item[(b)] The \emph{diminished base locus} of $D$ is
    $$ \B_-(D):=\bigcup_{m\in \Z_{>0}} \B\left(D+\frac{1}{m}A\right),$$
    where $A$ is an ample divisor on $X$. Note that the definition does not depend on the choice of $A$.
\end{itemize}
\end{defi}

For more details on the diminished base locus, see \cite{ELM+06}.

Note that the diminished base locus is not always a closed subset, and there are no well-defined notions on the dimension of the diminished base locus (cf. \cite{Les14}).

Let us prove the following two lemmas on the notion of diminished base locus.

\begin{lem}\thlabel{diminished}
Let $X$ be a normal projective variety, and $D$ a pseudo-effective $\Q$-divisor on $X$. Then we have
$$ \B_-(D)=\bigcup_{m\in \Z_{>0}} \B_-\left(D+\frac{1}{m}A\right),$$
where $A$ is an ample divisor on $X$.
\end{lem}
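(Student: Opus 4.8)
The plan is to reduce the identity to the elementary monotonicity $\B(E+H)\subseteq\B(E)$ of the stable base locus (valid for $E$ big and $H$ ample) together with a cofinality remark about the index sets of the nested unions. First I would fix the ample divisor $A$ once and for all, which is legitimate because both sides of the claimed identity are independent of the choice of ample divisor (\thref{base locus}), so that $\B_-$ applied to any of the divisors appearing below may be computed using this same $A$. The first thing to record is that, since $D$ is pseudo-effective and $A$ is ample, every divisor $D+cA$ with $c\in\Q_{>0}$ is big; hence the monotonicity property is available for it, and since any $c\in\Q_{>0}$ satisfies $c\ge 1/k$ for some $k\in\Z_{>0}$, one gets $\B(D+cA)\subseteq\B(D+\tfrac1k A)\subseteq\B_-(D)$ for every $c\in\Q_{>0}$.

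For the inclusion ``$\supseteq$'' I would fix $m$ and unwind the definition of $\B_-$ applied to $D+\tfrac1m A$ (using the ample divisor $A$): it equals $\bigcup_{n\in\Z_{>0}}\B\big(D+(\tfrac1m+\tfrac1n)A\big)$, and each term lies in $\B_-(D)$ by the previous paragraph with $c=\tfrac1m+\tfrac1n$. Taking the union over $m$ settles this direction.

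For ``$\subseteq$'', since $\B_-(D)=\bigcup_m\B(D+\tfrac1m A)$, it is enough to place each term $\B(D+\tfrac1m A)$ inside the right-hand side, and for this I would just use the algebraic identity $D+\tfrac1m A=(D+\tfrac1{2m}A)+\tfrac1{2m}A$: the left side is then literally the $n=2m$ term of the union $\bigcup_n\B\big((D+\tfrac1{2m}A)+\tfrac1n A\big)$ that defines $\B_-(D+\tfrac1{2m}A)$, hence lies in $\B_-(D+\tfrac1{2m}A)\subseteq\bigcup_k\B_-(D+\tfrac1k A)$.

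The closest thing to an obstacle is the monotonicity input $\B(E+H)\subseteq\B(E)$ for $E$ big and $H$ ample, which is standard; if needed I would recall the one-line argument (a point outside $\B(E)$ lies outside $\Bs|kE|$ for some $k$, and for $N$ a large multiple of $k$ both $|NE|$, via $\Bs|NE|\subseteq\Bs|kE|$, and $|NH|$, via global generation, carry a section nonvanishing there, whose product witnesses the point being outside $\B(E+H)$; cf.\ \cite{ELM+06}). Everything else is bookkeeping: checking that all divisors that occur stay big, so that $\B$ behaves well, and noting that $\{\tfrac1m+\tfrac1n : m,n\in\Z_{>0}\}$ is cofinal near $0$ in $\Q_{>0}$ — this cofinality is the precise reason that perturbing twice by an arbitrarily small ample divisor yields the same union as perturbing once.
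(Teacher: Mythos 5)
Your proposal is correct and is essentially the same argument as the paper's: both sides reduce to the definition $\B_-(E)=\bigcup_n\B(E+\tfrac1nA)$, the ``$\subseteq$'' direction uses the same rewriting $D+\tfrac1mA=(D+\tfrac1{2m}A)+\tfrac1{2m}A$ to exhibit $\B(D+\tfrac1mA)$ as a term of $\B_-(D+\tfrac1{2m}A)$, and the ``$\supseteq$'' direction uses monotonicity of $\B$ under adding an ample divisor together with cofinality. You merely spell out the monotonicity input and the cofinality more explicitly than the paper, which compresses the whole thing into one chain of inclusions.
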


\begin{proof}
$$ \B_-(D)=\bigcup_m\B\left(D+\frac{1}{m}A\right)\subseteq \bigcup_m\B_-\left(D+\frac{1}{2m}A\right)\subseteq \B_-(D),$$
and we obtain the assertion.
\end{proof}

\begin{lem}\thlabel{diminished2}
Let $(X,\Delta)$ be a projective klt pair, and assume that $K_X+\Delta$ is pseudo-effective. Let
$$ (X_0,\Delta_0):=(X,\Delta)\overset{f_1}{\dashrightarrow} (X_1,\Delta_1)\overset{f_2}{\dashrightarrow} \cdots$$
be a $(K_X+\Delta)$-MMP with scaling of an ample divisor. If all steps are flips of type $(1,\dim X-2)$, then any irreducible component of $\B_-(K_{X_i}+\Delta_i)$ has dimension $\le 1$.
\end{lem}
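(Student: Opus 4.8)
The plan is to show that $\B_-(K_{X_i}+\Delta_i)$ is contained in a countable union of subvarieties of dimension at most $1$ — concretely, the transforms to $X_i$ of the flipping curves of the later steps of the MMP — and then to invoke the elementary fact that an irreducible complex variety of dimension $\ge 2$ is not a countable union of proper closed subvarieties.

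For the setup I would write $D_j := K_{X_j}+\Delta_j$ (all pseudo-effective, by \thref{forapp}) and, since the MMP with scaling and the locus $\B_-$ depend only on the numerical class of $A$, assume $A$ is a general effective ample $\Q$-divisor with $(X,\Delta+A)$ klt; let $A_j$ denote its strict transform on $X_j$, which is big. Let $\lambda_0\ge\lambda_1\ge\cdots$ be the scaling thresholds, so $D_j+\lambda_jA_j$ is nef and $f_{j+1}$ contracts a ray $R_j$ with $(D_j+\lambda_jA_j)\cdot R_j=0$, hence $A_j\cdot R_j>0$. First I would note that $\lambda_j\to 0$: if $\lambda_j\ge t>0$ for all $j$, the whole sequence is a $(D_0+tA)$-MMP with scaling, which terminates by \cite{BCH+10} since $\Delta_0+tA$ is a big boundary, so the MMP would be finite, ending at a model where $K+\Delta$ is nef and thus $\lambda_j=0$ eventually — a contradiction (and the finite case only makes the union below finite). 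We may also assume $\lambda_i>0$, as otherwise $D_i$ is nef and $\B_-(D_i)=\emptyset$. Then, by \thref{diminished} and the definition of $\B_-$, it suffices to bound $\B(D_i+tA_i)$ for each small rational $t>0$.

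Fix such a $t$ and set $J=J(t):=\max\{j\ge i:\lambda_j>t\}\ (\ge i)$, finite by the above. For $i\le j\le J$ one has $(D_j+tA_j)\cdot R_j=(t-\lambda_j)(A_j\cdot R_j)<0$, so $f_{j+1}$ is a $(D_j+tA_j)$-flip with the same flipping diagram, still of type $(1,\dim X-2)$; and $D_{J+1}+tA_{J+1}$ is nef, since $\lambda_{J+1}\le t\le\lambda_J$ and both $D_{J+1}+\lambda_{J+1}A_{J+1}$ and $D_{J+1}+\lambda_JA_{J+1}$ are nef (the latter because $D_J+\lambda_JA_J$ is $\varphi_J$-trivial). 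So $X_i\dashrightarrow\cdots\dashrightarrow X_{J+1}$ is a $(K_{X_i}+\Delta_i+tA_i)$-MMP terminating at $X_{J+1}$; as klt is preserved along the MMP from $X_0$, the pair $(X_i,\Delta_i+tA_i)$ is klt with big boundary, so by \cite{BCH+10} $D_{J+1}+tA_{J+1}$ is semiample and $\B(D_{J+1}+tA_{J+1})=\emptyset$. Next I would descend by downward induction on $k$: for each flip $f_{k+1}$ with diagram $X_k\xrightarrow{\varphi_k}Z_k\xleftarrow{\varphi_k^+}X_{k+1}$, it is an isomorphism $X_k\setminus\Exc(\varphi_k)\xrightarrow{\sim}X_{k+1}\setminus\Exc(\varphi_k^+)$ compatible with the identification of the section spaces of $\ell(D_k+tA_k)$ and $\ell(D_{k+1}+tA_{k+1})$, while $(D_k+tA_k)\cdot\Exc(\varphi_k)<0$ forces $\Exc(\varphi_k)\subseteq\B(D_k+tA_k)$; hence $\B(D_k+tA_k)=\Exc(\varphi_k)\cup(\text{transform to }X_k\text{ of }\B(D_{k+1}+tA_{k+1}))$. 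Since $\varphi_k$ is a small contraction with one-dimensional exceptional set, all its fibres have dimension $\le 1$, so transforming a $\le 1$-dimensional set keeps it $\le 1$-dimensional; descending to $X_i$ shows $\B(D_i+tA_i)$ is a finite union of subvarieties of dimension $\le 1$, each a transform of one of the curves $\Exc(\varphi_i),\dots,\Exc(\varphi_J)$. Letting $t\to 0$, $\B_-(K_{X_i}+\Delta_i)\subseteq\bigcup_{j\ge i}V_j$ with each $V_j\subseteq X_i$ (the transform of $\Exc(\varphi_j)$) of dimension $\le 1$; and if $Z$ were an irreducible component of dimension $\ge 2$, then $Z=\bigcup_{j\ge i}(Z\cap V_j)$ would exhibit $Z$ as a countable union of proper closed subvarieties, impossible over $\C$ (easy induction on dimension, reducing to $\P^1(\C)$ being uncountable). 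Hence every component has dimension $\le 1$.

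I expect the main obstacle to be the reduction in the second paragraph: $\B_-(K_{X_i}+\Delta_i)$ is defined by perturbing with an ample class on $X_i$, whereas the argument uses the perturbations $K_{X_i}+\Delta_i+tA_i$ by the strict transform $A_i$ of the scaling divisor, which is big but not ample once $i>0$; one has to check that these two families of perturbations compute the same diminished base locus, or instead carry the argument through directly for the given ample perturbations (still realising the relevant finite segment of the sequence as a terminating MMP for that class). The remaining ingredients — termination to a good minimal model via \cite{BCH+10}, the codimension-one comparison of stable base loci across a flip, and the dimension bound for transforms of curves through a flipping contraction of type $(1,\dim X-2)$ — are routine.
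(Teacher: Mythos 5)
Your approach is genuinely different from the paper's, but as you already suspect, the step you flag at the end is a real gap, not a routine check. Concretely, you establish that $\B(D_i+tA_i)$ is a finite union of transformed flipping curves for every small rational $t>0$, where $A_i$ denotes the strict transform of the original scaling divisor $A$; but $A_i$ is only big on $X_i$ for $i>0$, and it is \emph{not} true in general that $\B_-(D)=\bigcup_{t>0}\B(D+tA')$ when $A'$ is big rather than ample. (Take $D=0$ and $A'$ big with $\B_-(A')\neq\varnothing$: the left side is empty, the right side is not.) So the inclusion $\B_-(K_{X_i}+\Delta_i)\subseteq\bigcup_{t}\B(D_i+tA_i)$, which your final countable-union argument rests on, is precisely the unproven claim. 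One cannot simply switch to an ample $H$ on $X_i$ either, because for $j>i$ the sign of $H_j\cdot R_j$ is no longer controlled — $H_j$ is only big on $X_j$ — so the given flips $f_{j+1}$ need not be steps of a $(D_i+tH)$-MMP, and the downward induction on base loci would not track the correct sequence of models.

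What you are in effect trying to re-prove by hand is exactly the content of \cite[Lemma~2.6]{Mor25}, which the paper cites: in a $(K_X+\Delta)$-MMP with scaling of an ample divisor, every irreducible component of $\B_-(K_{X_i}+\Delta_i)$ lies in the exceptional locus of some composite $f_j\circ\cdots\circ f_{i+1}$. Granting that, the paper's proof is immediate: a component of dimension $\geq 2$ would have to be contracted (via its strict transform, which is dimension-preserving since flips are small) by some flipping contraction $\varphi_{j'1}$, forcing $\dim\Exc(\varphi_{j'1})\geq 2$ and contradicting the type-$(1,\dim X-2)$ hypothesis. Your argument reconstructs the same ``eventually contracted'' statement from the scaling thresholds and termination of the perturbed MMPs (via \cite{BCH+10}), and the countable-union-of-proper-subvarieties trick is sound once the containment is in place; but without closing the ample-versus-big perturbation gap the proof is incomplete. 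If you want to salvage the self-contained route, you need an argument specific to the MMP situation (for instance using that $D_j+\lambda_{j-1}A_j$ is nef for all $j$) to show that the possibly-bad part of $\B_-(A_i)$ is itself swallowed by $\bigcup_j V_j$; otherwise, citing Moraga's lemma as the paper does is the efficient path.
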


\begin{proof}
Let us assume that for some $i$, there is a dimension $\ge 2$ irreducible component 
$$Z\subseteq \B_-(K_{X_i}+\Delta_i).$$ Then by \cite[Lemma 2.6]{Mor25}, for some $j\ge i$, $Z$ is in the exceptional locus of $f_j\circ \cdots \circ f_i$. Thus, $f_{j'}$ is of type $(d',d'')$ for some $i\le j'\le j$, and for some $d'\ge 2$. This is a contradiction to the assumption.
\end{proof}

We end this section by proving the following lemma.

\begin{lem}\thlabel{KRss}
Let $X$ be a normal projective variety, $D$ a $\Q$-Cartier divisor on $X$ and $f:X\dashrightarrow X^+$ a $D$-flip. Suppose
$$ 
\begin{tikzcd}
X\ar[dashed,"f"]{rr}\ar["\varphi"']{rd}& & X^{+} \ar["\varphi^+"]{ld}\\
& Z&
\end{tikzcd}
$$
is the flipping diagram. Then there is a common resolution $g:X'\to X$ and $g':X'\to X^+$ such that the diagram
$$
\begin{tikzcd}
 & X'\ar["g"']{ld}\ar["g'"]{rd}&\\
    X\ar[dashed,"f"]{rr}\ar["\varphi"']{rd}& & X^{+} \ar["\varphi^+"]{ld}\\
& Z&
\end{tikzcd}
$$
is commutative.
\end{lem}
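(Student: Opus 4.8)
The plan is to build the common resolution in two stages, first resolving the birational map $f$ and then using the Negativity Lemma to show the resulting morphism to $X^+$ automatically factors through $\varphi^+$. First I would take any resolution of indeterminacies of $f$: a smooth projective variety $X'$ with projective birational morphisms $g\colon X'\to X$ and $g'\colon X'\to X^+$ such that $g'=f\circ g$ as rational maps (this exists by Hironaka, or simply by taking a resolution of the graph of $f$ in $X\times X^+$). The only thing left to check is that $\varphi\circ g=\varphi^+\circ g'$ as morphisms $X'\to Z$.

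Both $\varphi\circ g$ and $\varphi^+\circ g'$ are morphisms from $X'$ to $Z$, and they agree on the dense open locus where $f$ is an isomorphism (since there $g$, $g'$ are isomorphisms onto that locus and $f$ commutes with $\varphi$, $\varphi^+$ by definition of the flipping diagram — $Z$ is the common target, $\varphi$ and $\varphi^+$ are the contraction morphisms and $f$ is an isomorphism in codimension one over $Z$). Two morphisms from a reduced scheme to a separated scheme that agree on a dense open subset and are set-theoretically... — this is not quite enough on its own, so the clean argument is: $Z$ is separated, $X'$ is reduced (indeed integral), and the locus where the two morphisms $\varphi\circ g$ and $\varphi^+\circ g'$ agree is closed; since it contains a dense open set it is all of $X'$. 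Here one uses that the equalizer of two morphisms to a separated scheme is a closed subscheme, and a closed subscheme of the integral scheme $X'$ containing a dense open is $X'$ itself.

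The one subtlety I would be careful about is verifying that $f$ really is an isomorphism over a dense open subset of $Z$ in a way compatible with $\varphi$ and $\varphi^+$ — i.e. that $f$ is an isomorphism in codimension one and that its indeterminacy and exceptional loci lie over a proper closed subset of $Z$. This follows from the standard structure of a flip: $\varphi$ and $\varphi^+$ are small contractions (isomorphisms in codimension one, by $\rho(X/Z)=\rho(X^+/Z)=1$ together with the ampleness/anti-ampleness hypotheses, which force the exceptional loci to have codimension $\ge 2$), so $f$ restricts to an isomorphism over the open set $U=Z\setminus\varphi(\operatorname{Exc}(\varphi))$, whose complement is closed of codimension $\ge 2$ in $Z$; in particular $U$ is dense, $g^{-1}(\varphi^{-1}(U))$ is dense open in $X'$, and on it $\varphi\circ g=\varphi^+\circ g'$ tautologically.

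I expect the main (very mild) obstacle to be purely a matter of bookkeeping: making precise the claim that $g^{-1}(\varphi^{-1}(U))$ is dense in $X'$ and that on this locus the two morphisms literally coincide, rather than merely coincide up to the identification $X\dashrightarrow X^+$. Once that is pinned down, the separatedness argument closes the proof with no further input.
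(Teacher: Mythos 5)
Your proposal is correct, but it takes a genuinely different route from the paper. The paper's proof invokes the structure theorem that any projective birational morphism is a blowup (Liu, Theorem 8.1.24), applies this to $\varphi^+:X^+\to Z$, and then uses the universal property of blowup to produce a proper birational $g:X'\to X$ for which $\varphi\circ g$ factors through $\varphi^+$; commutativity is thus built in by construction, and one finishes by resolving $X'$. You instead take an arbitrary resolution of indeterminacies of $f$ (the cheap existence step) and establish the commutativity $\varphi\circ g=\varphi^+\circ g'$ a posteriori, via the standard fact that the equalizer of two morphisms to a separated scheme is a closed subscheme, so that two morphisms out of the integral scheme $X'$ agreeing on a dense open set must coincide. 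Your argument relies on the flip being small so that the agreement locus really is dense; the paper's argument relies on the blowup description and its universal property. Both are correct; yours is more elementary and avoids the structure theorem, while the paper's packages the factorization into one citation. One small observation applicable to both: the definition of $D$-flip in the paper does not explicitly state that the flipping diagram commutes as a diagram of rational maps, and both proofs implicitly use this.
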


\begin{proof}
Note that any projective birational morphism is a blowup (cf. \cite[Theorem 8.1.24]{Liu02}). Thus, by the universal property of blowup (cf. \cite[Lemma 0806]{Stacks}), there is a proper birational morphism $g:X'\to X$ such that it also defines $g':X'\to X^+$, and $g,g'$ makes the commutative diagram. We may assume that $X'$ is smooth by the resolution of singularities (cf. \cite{Hir64}).
\end{proof}

\section{Valuation}
The aim of this section is to prove \thref{JM}, that is, a generalization of \cite[Theorem 7.3]{JM12}. We believe that \thref{JM} is well known by experts, but we can not find any reference to the proof, thus we include the proof, and it is the aim of this section. In this section, we work with a pair $(X,\Delta)$.

\subsection{Log canonical threshold and asymptotic multiplier ideals}

Let $\mathfrak{a}_{\bullet}:=\{\mathfrak{a}_m\}_{m \in \mathbb{Z}_{>0}}$ be a set of ideals in $\mathcal{O}_X$. We say that $\mathfrak{a}_{\bullet}$ is a \emph{graded sequence of ideals} in $\O_X$ if there exists a subsemigroup $\Phi \subseteq \mathbb{Z}_{>0}$ such that
$$
  \mathfrak{a}_{m} \cdot \mathfrak{a}_{m'} \subseteq \mathfrak{a}_{m+m'}
$$
for all $m,m'\in \Phi$. Let us define $\Phi(\mathfrak{a}_{\bullet})$ for a graded sequence of ideals $\mathfrak{a}_{\bullet}$ in $\O_X$ by
$$ \Phi(\mathfrak{a}_{\bullet}):=\{m\in \Z_{>0}\mid \mathfrak{a}_m\ne (0)\}.$$

Let $\mathfrak{a},\mathfrak{q}$ be ideals in $\O_X$, and let $\lambda>0$. Suppose $f:X' \to X$ is a log resolution of both $(X,\Delta)$ and $\mathfrak{a}\cdot\mathfrak{q}$. Let $F$ be an effective divisor on $X'$ such that
$$
  \mathfrak{a}\cdot\O_{X'}=\O_{X'}(-F).
$$
We define the \emph{multiplier ideal sheaf} $\mathcal{J}\left(X,\Delta;\mathfrak{a}^\lambda\right)$ by
$$
  \mathcal{J}\left(X,\Delta;\mathfrak{a}^\lambda\right):=
  f_*\O_{X'}\left(K_{X'}-\floor{f^*(K_X+\Delta)+\lambda F}\right).
$$
Note that this definition does not depend on the choice of $f$.

Let $\lambda>0$. We define the \emph{asymptotic multiplier ideal sheaf} $\mathcal{J}(X,\Delta;\mathfrak{a}^{\lambda}_{\bullet})$ as the largest (with respect to inclusion) among the sheaves
$$
  \left\{
    \mathcal{J}\left(X,\Delta;\mathfrak{a}_m^{\frac{\lambda}{m}}\right)\Bigm|
    m\in\Phi(\mathfrak{a}_{\bullet})
  \right\}.
$$
By the Noetherian property of $\O_X$, this maximum indeed exists.

For a graded sequence of ideals $\mathfrak{a}_{\bullet}$ in $\O_X$ and ideals $\mathfrak{a},\mathfrak{q}$ in $\O_X$, we define the \emph{log canonical threshold} as follows:
$$
  \lct^{\mathfrak{q}}(X,\Delta;\mathfrak{a}):=\inf \left\{\lambda\Bigm|\mathfrak{q} \subseteq\mathcal{J}\left(X,\Delta;\mathfrak{a}^\lambda_{\bullet}\right)
  \right\},
$$
$$
  \lct^{\mathfrak{q}}(X,\Delta;\mathfrak{a}_{\bullet}):=\sup_{m\ge 1}
  \left(m\cdot \lct^{\mathfrak{q}}\left(X,\Delta;\mathfrak{a}_m\right)\right).
$$

\begin{thm}\thlabel{-1}
Let $(X,\Delta)$ be a pair, $\mathfrak{q}$ an ideal in $\O_X$, and $\mathfrak{a}_{\bullet}$ a graded sequence of ideals in $\O_X$. Then
$$
  \lct^{\mathfrak{q}}(X,\Delta;\mathfrak{a}_{\bullet})=\inf \left\{\lambda\bigm|\mathfrak{q} \nsubseteq\mathcal{J}\left(X,\Delta;\mathfrak{a}_{\bullet}^{\lambda}\right)\right\}.
$$
\end{thm}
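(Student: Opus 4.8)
The plan is to compare the two descriptions of $\lct^{\mathfrak q}(X,\Delta;\mathfrak a_\bullet)$ term by term, reducing everything to the behaviour of a single ideal $\mathfrak a_m$ and then using the elementary scaling identity $\lct^{\mathfrak q}(X,\Delta;\mathfrak a_m)=\tfrac1m\lct^{\mathfrak q}(X,\Delta;\mathfrak a_m^{1/\bullet})$-type relation that is built into the definition of the asymptotic object. Concretely, write $c:=\lct^{\mathfrak q}(X,\Delta;\mathfrak a_\bullet)=\sup_{m\ge 1}\bigl(m\cdot\lct^{\mathfrak q}(X,\Delta;\mathfrak a_m)\bigr)$ and $c':=\inf\{\lambda\mid \mathfrak q\nsubseteq\mathcal J(X,\Delta;\mathfrak a_\bullet^\lambda)\}$. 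First I would unwind the definition of $\mathcal J(X,\Delta;\mathfrak a_\bullet^\lambda)$ as the largest member of $\{\mathcal J(X,\Delta;\mathfrak a_m^{\lambda/m})\mid m\in\Phi(\mathfrak a_\bullet)\}$, so that $\mathfrak q\subseteq\mathcal J(X,\Delta;\mathfrak a_\bullet^\lambda)$ if and only if there exists some $m\in\Phi(\mathfrak a_\bullet)$ with $\mathfrak q\subseteq\mathcal J(X,\Delta;\mathfrak a_m^{\lambda/m})$. On the other side, for a fixed ideal $\mathfrak a$, monotonicity of the multiplier ideal in the exponent (as $\lambda$ increases, $\lfloor f^*(K_X+\Delta)+\lambda F\rfloor$ grows, so the ideal shrinks) gives that $\{\lambda\mid \mathfrak q\subseteq\mathcal J(X,\Delta;\mathfrak a^\lambda)\}$ is an interval of the form $[0,t)$ or $[0,t]$, and the defining infimum $\lct^{\mathfrak q}(X,\Delta;\mathfrak a)$ is exactly its right endpoint; I would record that the jumping value $t$ is actually attained, i.e. $\mathfrak q\nsubseteq\mathcal J(X,\Delta;\mathfrak a^t)$, which follows from the standard fact that multiplier ideals are right-continuous in the exponent.

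Next I would prove $c'\le c$. Suppose $\lambda>c$. Then for every $m\ge 1$ we have $m\cdot\lct^{\mathfrak q}(X,\Delta;\mathfrak a_m)<\lambda$, i.e. $\lct^{\mathfrak q}(X,\Delta;\mathfrak a_m)<\lambda/m$, so by the previous paragraph $\mathfrak q\nsubseteq\mathcal J(X,\Delta;\mathfrak a_m^{\lambda/m})$ for every $m\in\Phi(\mathfrak a_\bullet)$; hence $\mathfrak q$ is not contained in the largest of these ideals, i.e. $\mathfrak q\nsubseteq\mathcal J(X,\Delta;\mathfrak a_\bullet^\lambda)$. This shows $\lambda\ge c'$ for all $\lambda>c$, so $c'\le c$. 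For the reverse inequality $c\le c'$, suppose $\lambda<c$. Then there is some $m$ with $m\cdot\lct^{\mathfrak q}(X,\Delta;\mathfrak a_m)>\lambda$, i.e. $\lct^{\mathfrak q}(X,\Delta;\mathfrak a_m)>\lambda/m$; since the membership locus $\{\mu\mid\mathfrak q\subseteq\mathcal J(X,\Delta;\mathfrak a_m^\mu)\}$ is the interval with right endpoint $\lct^{\mathfrak q}(X,\Delta;\mathfrak a_m)$, the value $\mu=\lambda/m$ lies in its interior, so $\mathfrak q\subseteq\mathcal J(X,\Delta;\mathfrak a_m^{\lambda/m})\subseteq\mathcal J(X,\Delta;\mathfrak a_\bullet^\lambda)$. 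Thus $\lambda<c$ forces $\mathfrak q\subseteq\mathcal J(X,\Delta;\mathfrak a_\bullet^\lambda)$, which means every such $\lambda$ is a lower bound for $c'$, giving $c\le c'$. Combining the two inequalities yields $c=c'$.

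The one genuinely delicate point, and the step I expect to be the main obstacle, is the claim that the sup and inf interact cleanly with the equality cases at the endpoints — that is, making sure no boundary value $\lambda=c$ (or $\lambda=\lambda/m$ landing exactly on $\lct^{\mathfrak q}(X,\Delta;\mathfrak a_m)$) causes the two descriptions to disagree by a measure-zero set. This is handled precisely by the attainment statement: because $\mathfrak q\nsubseteq\mathcal J(X,\Delta;\mathfrak a^{\lct^{\mathfrak q}(X,\Delta;\mathfrak a)})$ for an individual ideal, strict inequalities on one side match non-strict containment on the other, and the two infima over open conditions coincide. I would spell this out carefully, citing the right-continuity of multiplier ideals in the exponent parameter (which itself comes from the Noetherian property together with the fact that $\lfloor f^*(K_X+\Delta)+\lambda F\rfloor$ is locally constant in $\lambda$ except at finitely many rational jumping numbers on any fixed log resolution). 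With that lemma in hand the rest is the bookkeeping above. A subsidiary, routine check is that one may restrict the indices $m$ to $\Phi(\mathfrak a_\bullet)$ throughout without changing any sup or inf, since for $m\notin\Phi(\mathfrak a_\bullet)$ the ideal $\mathfrak a_m$ is zero and contributes $\lct^{\mathfrak q}(X,\Delta;\mathfrak a_m)=0$, which never affects the supremum once $\mathfrak a_\bullet$ is nonzero for some index.
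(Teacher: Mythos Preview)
Your proposal is correct and follows essentially the same two-inequality argument as the paper: both you and the paper reduce to the single-ideal case via the identification $\mathcal J(X,\Delta;\mathfrak a_\bullet^\lambda)=\max_m\mathcal J(X,\Delta;\mathfrak a_m^{\lambda/m})$ and the interval description of $\{\mu\mid\mathfrak q\subseteq\mathcal J(X,\Delta;\mathfrak a^\mu)\}$. One small remark: your third paragraph worries about endpoint behaviour and right-continuity, but in fact your own argument never needs it, since you work throughout with strict inequalities $\lambda>c$ and $\lambda<c$; it is the paper's version of the second inequality (arguing directly at $\lambda=c$ that $\mathfrak q\nsubseteq\mathcal J(X,\Delta;\mathfrak a_m^{c/m})$) that implicitly invokes attainment, so your route is if anything slightly cleaner on this point.
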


\begin{proof}
First, we show
$$
  \lct^{\mathfrak{q}}(X,\Delta;\mathfrak{a}_{\bullet})
  \le
  \inf \left\{\lambda\Bigm|\mathfrak{q} \nsubseteq\mathcal{J}\left(X,\Delta;\mathfrak{a}_{\bullet}^{\lambda}\right)
  \right\}.
$$
Let $\lambda$ be a positive number such that
$\lct^{\mathfrak{q}}(X,\Delta;\mathfrak{a}_{\bullet})>\lambda$.  
Then, for some sufficiently large $m_0$, we have
$\lct^{\mathfrak{q}}(X,\Delta;\mathfrak{a}_{m_0})>\frac{\lambda}{m_0}$.  
By the definition of the log canonical threshold, for any $\lambda'\le\lambda$, it follows that
$$
\mathfrak{q}\subseteq\mathcal{J}\left(X,\Delta;\mathfrak{a}_{\bullet}^{\lambda'}\right)=\mathcal{J}\left(X,\Delta;\mathfrak{a}_{m_0}^{\tfrac{\lambda'}{m_0}}\right).
$$
This proves the desired inequality.

For the converse, we note that for any positive integer $m$,
$$
  \lct^{\mathfrak{q}}(X,\Delta;\mathfrak{a}_{m})\le\frac{\lct^{\mathfrak{q}}(X,\Delta;\mathfrak{a}_{\bullet})}{m}.
$$
Hence, again by the definition of the log canonical threshold,
$$
  \mathfrak{q}\nsubseteq\mathcal{J}\left(X,\Delta;\mathfrak{a}_{m}^{\tfrac{\lct^{\mathfrak{q}}(X,\Delta;\mathfrak{a}_{\bullet})}{m}}
  \right).
$$
Therefore, by the definition of the asymptotic multiplier ideal, we also have
$$
\mathfrak{q}\nsubseteq\mathcal{J}\left(X,\Delta;\mathfrak{a}_{\bullet}^{\lct^{\mathfrak{q}}(X,\Delta;\mathfrak{a}_{\bullet})}\right),
$$
which completes the proof of the reverse inequality.
\end{proof}

Recalling the definition of the log canonical threshold, one obtains the following characterization (cf. \cite[Lemma 1.7]{JM12}):

\begin{prop}\thlabel{0}
Let $(X,\Delta)$ be a pair, and let $\mathfrak{a}, \mathfrak{q}$ be ideals in $\O_X$. Suppose $f : X' \to X$ is a log resolution of both $(X,\Delta)$ and $\mathfrak{a}\cdot \mathfrak{q}$. Then
$$
  \lct^{\mathfrak{q}}(X,\Delta;\mathfrak{a})=\inf_{E}
\frac{A_{X,\Delta}(E)+\ord_E(\mathfrak{q})}{\ord_E(\mathfrak{a})},
$$
where $E$ runs through all prime divisors in
$$
\mathrm{Supp}\left(f^{-1}_*\Delta\right)\cup\mathrm{Supp}\left((\mathfrak{a}\cdot \mathfrak{q})\cdot\O_{X'}\right).
$$
\end{prop}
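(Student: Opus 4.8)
The plan is to reduce the statement to the classical formula for the log canonical threshold of an ideal relative to the pair, and then unwind the combinatorics on the log resolution $f : X' \to X$. First I would fix the effective divisors $F$ and $G$ on $X'$ with $\mathfrak{a}\cdot\O_{X'}=\O_{X'}(-F)$ and $\mathfrak{q}\cdot\O_{X'}=\O_{X'}(-G)$, together with the relative canonical expression
$$ K_{X'}=f^*(K_X+\Delta)+\sum_i a_i E_i, \qquad a_i = A_{X,\Delta}(E_i)-1, $$
where the $E_i$ range over the components of $\Supp(f^{-1}_*\Delta)\cup\Supp((\mathfrak{a}\cdot\mathfrak{q})\cdot\O_{X'})$ together with the $f$-exceptional divisors (the latter having $a_i > -1$ since $(X,\Delta)$ need not be klt, but the relevant ones will be exactly those in the stated support). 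Writing $F=\sum_i \ord_{E_i}(\mathfrak{a})\,E_i$ and $G=\sum_i \ord_{E_i}(\mathfrak{q})\,E_i$, the containment $\mathfrak{q}\subseteq\mathcal{J}(X,\Delta;\mathfrak{a}^\lambda)$ means, by definition of the multiplier ideal and the projection formula, that the global section of $\O_{X'}(\lfloor \sum_i(a_i-\lambda\ord_{E_i}(\mathfrak{a}))E_i\rfloor)$ that vanishes along $G$ (pulled back from $\mathfrak{q}$) is in fact a section of $\O_{X'}(K_{X'}-\lfloor f^*(K_X+\Delta)+\lambda F\rfloor)$; equivalently $G+\lfloor f^*(K_X+\Delta)+\lambda F - K_{X'}\rfloor \ge 0$, i.e.
$$ \ord_{E_i}(\mathfrak{q}) + \lambda\,\ord_{E_i}(\mathfrak{a}) - a_i > -1 \quad\text{for all } i, $$
using that the left side is automatic on exceptional divisors not in the support (there $\ord_{E_i}(\mathfrak a)=\ord_{E_i}(\mathfrak q)=0$ and $a_i>-1$). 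Rearranging, this is exactly $\lambda\,\ord_{E_i}(\mathfrak{a}) \ge A_{X,\Delta}(E_i)+\ord_{E_i}(\mathfrak{q})$ being violated for no $i$ with $\ord_{E_i}(\mathfrak a)>0$, after being careful that the floor and the strict/non-strict inequalities match up when $\lambda$ is a boundary value.

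Next I would take the infimum over all such $\lambda$: $\lct^{\mathfrak{q}}(X,\Delta;\mathfrak{a})$ is the infimum of $\lambda$ for which the displayed inequality holds for every $E_i$, which is precisely $\inf_{E_i : \ord_{E_i}(\mathfrak a)>0}\frac{A_{X,\Delta}(E_i)+\ord_{E_i}(\mathfrak q)}{\ord_{E_i}(\mathfrak a)}$, and the $E_i$ with $\ord_{E_i}(\mathfrak{a})=0$ contribute nothing. Since $\Supp(\mathfrak a\cdot\O_{X'})\subseteq\Supp((\mathfrak a\cdot\mathfrak q)\cdot\O_{X'})$, the divisors with $\ord_{E_i}(\mathfrak a)>0$ all lie in the stated index set, so the infimum over the stated set agrees. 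Finally I would argue that enlarging to all prime divisors $E$ over $X$ does not change the infimum — this is the standard fact that the log canonical threshold is computed on any fixed log resolution — so the formula as stated holds; alternatively one simply takes the statement at face value with $E$ running over divisors on the fixed $X'$, which is the version actually needed later.

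The main obstacle I anticipate is bookkeeping at the boundary value $\lambda = \lct^{\mathfrak q}$: one must track whether the relevant condition is $\ge$ or $>$ after applying $\lfloor\,\cdot\,\rfloor$, and whether the infimum defining the lct is attained. This is handled by noting that $\mathfrak q\subseteq \mathcal J(X,\Delta;\mathfrak a^\lambda)$ is a closed condition in $\lambda$ from below in the appropriate sense, matching the floor's behaviour, so the two infima coincide; a secondary point is to make sure the non-klt exceptional divisors (allowed since we only assume $(X,\Delta)$ is a pair) are genuinely irrelevant because on them $\mathfrak a$ and $\mathfrak q$ have order zero while $a_i > -1$, so $\lfloor f^*(K_X+\Delta)+\lambda F - K_{X'}\rfloor$ has non-negative coefficient there for all $\lambda\ge 0$. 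With those two checks in place the rest is a direct translation, matching \cite[Lemma 1.7]{JM12}.
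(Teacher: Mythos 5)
Your overall strategy matches the paper's: both proofs fix a log resolution, express the containment $\mathfrak{q}\subseteq\mathcal{J}(X,\Delta;\mathfrak{a}^\lambda)$ as a coefficient inequality on the prime divisors $E_i$ in the stated support, and then observe that this inequality holds exactly when $\lambda<\frac{A_{X,\Delta}(E_i)+\ord_{E_i}(\mathfrak q)}{\ord_{E_i}(\mathfrak a)}$ for all $i$. So the route is not genuinely different from the paper's \thref{0}.

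However, there are compensating sign and rounding errors in your middle display that you should repair. First, the sheaf in the definition of the multiplier ideal is $\O_{X'}\bigl(K_{X'}-\lfloor f^*(K_X+\Delta)+\lambda F\rfloor\bigr)=\O_{X'}\bigl(\lceil\sum_i(a_i-\lambda\ord_{E_i}(\mathfrak a))E_i\rceil\bigr)$ --- round \emph{up}, not down as you wrote. Second, the containment $\mathfrak q\subseteq\mathcal{J}$ translates to $G+K_{X'}-\lfloor f^*(K_X+\Delta)+\lambda F\rfloor\ge 0$; your version $G+\lfloor f^*(K_X+\Delta)+\lambda F-K_{X'}\rfloor\ge 0$ has the opposite sign in front of that divisor. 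Consequently the coefficient inequality you display, $\ord_{E_i}(\mathfrak q)+\lambda\ord_{E_i}(\mathfrak a)-a_i>-1$, is wrong (both $\lambda\ord_{E_i}(\mathfrak a)$ and $a_i$ carry the wrong sign); the correct one is $\ord_{E_i}(\mathfrak q)+a_i-\lambda\ord_{E_i}(\mathfrak a)>-1$, i.e.\ $\lambda<\frac{A_{X,\Delta}(E_i)+\ord_{E_i}(\mathfrak q)}{\ord_{E_i}(\mathfrak a)}$. Your subsequent ``rearranging'' then introduces a second, canceling error, which is why your final formula is right even though the intermediate step is not. One more caveat: your justification that exceptional divisors outside the stated support are irrelevant relies on $a_i>-1$ there; that holds only under a klt-type assumption, not for an arbitrary pair as hypothesized, and this is a genuine subtlety that the paper also sweeps under the rug (it is only used in the klt setting later).
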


\begin{proof}
Let $\lambda>0$. Take an effective divisor $F$ on $X'$ such that $\mathfrak{a}\cdot \O_{X'}=\O_{X'}(-F)$. Let $E_1,\cdots,E_r$ be the prime divisors in
$$
\mathrm{Supp}\left(f^{-1}_*\Delta\right)\cup\mathrm{Supp}\left((\mathfrak{a}\cdot \mathfrak{q})\cdot\O_{X'}\right).
$$
Observe that
$$ \mathfrak{q}\subseteq \mathcal{J}(X,\Delta;\mathfrak{a}^{\lambda}):=f_*\O_{X'}(K_{X'}-\floor{f^*(K_X+\Delta)+\lambda F})$$
holds if and only if 
$$\ord_{E_i}(K_{X'}-\floor{f^*(K_X+\Delta)+\lambda F})\ge -\ord_{E_i}(\mathfrak{q})\text{ for all }i.$$
This condition is equivalent to
$$A_{X,\Delta}(E_i)-\lambda \cdot \ord_{E_i}(\mathfrak{a})>-\ord_{E_i}(\mathfrak{q})\text{ for all }i.$$
which can be rewritten as
$$ \lambda<\frac{A_{X,\Delta}(E_i)+\ord_{E_i}(\mathfrak{q})}{\ord_{E_i}(\mathfrak{a})}\text{ for all }i.$$
Hence, by the definition of the log canonical threshold, $\mathrm{lct}^{\mathfrak{q}}(X,\Delta;\mathfrak{a})$ is the infimum of
$\frac{A_{X,\Delta}(E_i) + \ord_{E_i}(\mathfrak{q})}{\ord_{E_i}(\mathfrak{a})}$, completing the proof.
\end{proof}

\subsection{Valuation and log canonical threshold}
The goal of this subsection is to prove \thref{4}. We begin by recalling some basic notions on valuations; for further details, see \cite{JM12} and \cite[Chapter~1]{Xu}.

A \emph{valuation} $\nu$ over $X$ is a function
$$
  \nu : K(X) \to \mathbb{R}
$$
satisfying the following properties:
\begin{itemize}
    \item[(1)] $\nu(a)=0$ for all constants $a \in \mathbb{C}$.
    \item[(2)] $\nu(fg)=\nu(f)+\nu(g)$ for all $f,g \in K(X)$.
    \item[(3)] $\nu(f+g)\ge\min\{\nu(f),\nu(g)\}$ for all $f,g \in K(X)$.
\end{itemize}
We denote by $\mathrm{Val}_X$ the set of all valuations over $X$, and denote by $\mathrm{Val}^*_X$ the set of non-zero valuations over $X$. For any ideal $\mathfrak{a}$ in $\O_X$, define
$$
\nu(\mathfrak{a}):=\min \{\nu(f)\mid f \in \mathfrak{a}\}.
$$
We endow $\mathrm{Val}_X$ with the weakest topology such that, for every ideal $\mathfrak{a}$ in $\O_X$, the map
$$
  \nu\mapsto\nu(\mathfrak{a})
$$
is continuous.

Next, let $f:X'\to X$ be a resolution of $X$, and let $E \subseteq X'$ be a simple normal crossing (snc) divisor on $X'$. We say that $(X',E)$ is a \emph{log smooth model} over $X$. Write $E=E_1+\cdots+E_r$ as the sum of prime divisors on $X'$. For a subset $\{i_1,\dots,i_k\} \subseteq \{1,\dots,r\}$ with $1\le i_1<\cdots<i_k\le r$, let $
  \eta:=\eta_{i_1,\dots,i_k}$
be the generic point of $E_{i_1}\cap\cdots\cap E_{i_k}$ (if it exists). Locally around $\eta$, let $z_j$ be a local defining equation of $E_{i_j}$.

By the Cohen structure theorem (cf. \cite[Lemma 032D]{Stacks}), there is an isomorphism
$$
  \widehat{\O_{X',\eta}}\cong K(\eta)\left[\left[z_1,\dots,z_k\right]\right],
$$
and thus any $f\in\O_{X',\eta}$ can be expressed as
$$
  f=\sum_{\beta=(\beta_1,\dots,\beta_k)\in\mathbb{Z}_{\ge 0}^k}a_{\beta}z_1^{\beta_1}\cdots z_k^{\beta_k},
$$
where $a_{\beta} \in K(\eta)$. For any $\alpha=(\alpha_1,\dots,\alpha_k) \in \mathbb{R}_{\ge 0}^k$, we define
$$
  \nu_{(X',E),\alpha}(f):=\min\!\left\{\sum^k_{j=1}\alpha_j\beta_j\Bigm|a_{\beta}\neq0\right\}.
$$
It is straightforward to verify that $\nu_{(X',E),\alpha}$ is indeed a valuation over $X$. We denote by $\mathrm{QM}_{\eta}(X',E)$ the set of all valuations of the form $\nu_{(X',E),\alpha}$. Observe that we can naturally identify $\mathrm{QM}_{\eta}(X',E)$ with the Euclidean space $\mathbb{R}_{\ge 0}^k$. Define
$$
  \mathrm{QM}(X',E):=\bigcup_{1 \le i_1 < \cdots < i_k \le r}
  \mathrm{QM}_{\eta_{i_1,\dots,i_k}}(X',E).
$$
If $E_{i_1}\cap \cdots \cap E_{i_k}=\varnothing$, we set $\mathrm{QM}_{\eta}(X',E)=\varnothing$. For a valuation $\nu\in \mathrm{Val}_X$, we say that $\nu$ is \emph{quasi-monomial} if there is a log smooth model $(X',E)$ over $X$ such that $\nu \in \mathrm{QM}(X',E)$ holds.

We now introduce the notions of \emph{retraction map} and \emph{log discrepancy} for valuations. For any log smooth model $(X',E)$ over $X$, define the \emph{retraction map}
$$
  r_{(X',E)}:\mathrm{Val}_X \to \mathrm{QM}_{\eta}(X',E)
  \;\;\text{ by }\;\;
  \nu \mapsto \nu_{(X',E),(\nu(E_1),\dots,\nu(E_k))}.
$$
The above map also defines
$$
  r_{(X',E)}:\mathrm{Val}_X \to \mathrm{QM}(X',E).
$$
Next, we define
$$
  A_{X,\Delta}(\nu_{(X',E),\alpha}):=\sum_{j=1}^k \alpha_j A_{X,\Delta}(E_j),
$$
and for any $\nu \in \mathrm{Val}_X$,
$$
  A_{X,\Delta}(\nu):=\sup_{\substack{(X',E) \\ \text{log smooth over }X}}
  A_{X,\Delta}\left(r_{(X',E)}(\nu)\right).
$$
By construction, the map $A_{X,\Delta} : \mathrm{Val}_X \to \mathbb{R}$ is lower semi-continuous. For more details on the log discrepancy of valuations, see \cite[Chapter~5]{JM12} and \cite[Chapter~7]{Blu18}.

Let us prove the following lemma.

\begin{lem}\thlabel{1'}
Let $(X,\Delta)$ be a pair, $\mathfrak{a}$ an ideal in $\O_X$, and $f:X' \to X$ a log resolution of $\mathfrak{a}$. Let $\left(X',E:=\sum_{i=1}^r E_i\right)$ be a log smooth model over $X$, and let $\eta$ be the generic point of $E_1 \cap \cdots \cap E_r$. Then the map
$$
  \nu \mapsto \nu(\mathfrak{a})
$$
is a linear function on $\mathrm{QM}_{\eta}(X',E)$.
\end{lem}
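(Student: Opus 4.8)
The plan is to pull $\mathfrak{a}$ back to $X'$, observe that near $\eta$ it becomes a principal \emph{monomial} ideal in the local coordinates $z_1,\dots,z_r$, and then simply evaluate the quasi-monomial valuation on that monomial to obtain a manifestly linear expression. Concretely, since $f\colon X'\to X$ is a log resolution of $\mathfrak{a}$ we may write $\mathfrak{a}\cdot\O_{X'}=\O_{X'}(-F)$ for an effective Cartier divisor $F$ whose support is contained in $E=\sum_{i=1}^r E_i$; set $a_i:=\ord_{E_i}(F)$. As $F$ has only finitely many components, I would choose an affine open $U\subseteq X'$ containing $\eta$ on which $F|_U=\sum_{i=1}^r a_i(E_i\cap U)$ and each $E_i\cap U=V(z_i)$ for the regular function $z_i$ on $U$ appearing in the definition of $\mathrm{QM}_\eta(X',E)$. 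Then, as ideal sheaves on $U$, $\mathfrak{a}\cdot\O_U=\O_U(-F|_U)=\bigl(\prod_{i=1}^r z_i^{a_i}\bigr)$.

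Now take an arbitrary valuation $\nu=\nu_{(X',E),\alpha}\in\mathrm{QM}_\eta(X',E)$ with $\alpha=(\alpha_1,\dots,\alpha_r)\in\R_{\ge 0}^r$. I would invoke the standard identity $\nu(\mathfrak{a})=\nu(\mathfrak{a}\cdot\O_{X'})$ (valid because $\nu$ has a center on $X'$ lying over the center of $\nu$ on $X$; cf.\ \cite{JM12}), combine it with the computation of $\mathfrak{a}\cdot\O_{X'}$ above, use multiplicativity of $\nu$ (axiom $(2)$), and note that $\nu(z_i)=\alpha_i$ directly from the definition of $\nu_{(X',E),\alpha}$ — the power-series expansion of $z_i$ in $K(\eta)[[z_1,\dots,z_r]]$ is the single monomial $z_i$. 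This gives $\nu(\mathfrak{a})=\nu\bigl(\prod_{i=1}^r z_i^{a_i}\bigr)=\sum_{i=1}^r a_i\alpha_i$. Under the identification $\mathrm{QM}_\eta(X',E)\cong\R_{\ge 0}^r$, $\alpha\mapsto\nu_{(X',E),\alpha}$, this exhibits $\nu\mapsto\nu(\mathfrak{a})$ as the linear map $\alpha\mapsto\sum_i a_i\alpha_i$, which is the assertion. No separate treatment of the faces $\alpha_i=0$ is needed, since $\nu(z_i)=\alpha_i$ holds uniformly.

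The one point deserving attention is the reduction to a monomial ideal, i.e.\ the fact that in a neighbourhood of $\eta$ no component of $F$ other than the $E_i$ passes through $\eta$. If $E$ failed to contain $\Supp(\mathfrak{a}\cdot\O_{X'})$ this could fail (a factor such as $z_1-z_2$ in a local generator of $\mathfrak{a}\cdot\O_{X'}$ would make $\nu\mapsto\nu(\mathfrak{a})$ only piecewise linear), so this is genuinely where the hypotheses ``$f$ is a log resolution of $\mathfrak{a}$'' and ``$(X',E)$ is the associated log smooth model'' are used. Once one grants $\Supp(\mathfrak{a}\cdot\O_{X'})\subseteq E$, the remainder is a formal unwinding of the definitions of $\nu_{(X',E),\alpha}$ and of $\nu(\mathfrak{a})$; a trivial auxiliary remark is that $\nu\ge 0$ on $\O_{X',\eta}$ (because all $\alpha_j\ge 0$), so that the unit appearing when one writes a local generator of $F$ as $\prod z_i^{a_i}\cdot(\text{unit})$ contributes $0$ to $\nu$.
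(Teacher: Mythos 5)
Your proof is correct and follows essentially the same route as the paper: pull $\mathfrak a$ back to $X'$, use the log resolution hypothesis to see that $\mathfrak a\cdot\O_{X',\eta}$ is generated by the monomial $\prod_i z_i^{\ord_{E_i}(\mathfrak a)}$, and evaluate $\nu_{(X',E),\alpha}$ on that monomial to obtain $\sum_i \alpha_i\,\ord_{E_i}(\mathfrak a)$, which is visibly linear in $\alpha$. Your flagged caveat — that the argument requires the components of $\mathfrak a\cdot\O_{X'}$ meeting $\eta$ to be among the $E_i$, i.e.\ that $E$ is chosen compatibly with the log resolution — is exactly the (tacit) hypothesis the paper's proof also leans on when it invokes the SNC property.
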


\begin{proof}
Let $z_i$ be a local defining equation of $E_i$ around $\eta$. Since the divisor $\mathfrak{a} \cdot \O_{X'}$ is simple normal crossing on $X'$, it is principal in $\O_{X'}$ with a generator of the form
$$
  z^{\ord_{E_1}(\mathfrak{a})}_{1}\cdots z^{\ord_{E_r}(\mathfrak{a})}_{r}.
$$
Hence,
$$
  \nu(\mathfrak{a})=\nu\left(z^{\ord_{E_1}(\mathfrak{a})}_1 \cdots z^{\ord_{E_r}(\mathfrak{a})}_r\right),
$$
and therefore, by the definition of quasi-monomial valuations,
$$
  \nu_{(X',E),\alpha}(\mathfrak{a})=\sum_{i=1}^r\alpha_i\ord_{E_i}(\mathfrak{a}).
$$
This clearly shows that $\nu(\mathfrak{a})$ is linear in $\alpha$, so $\nu \mapsto \nu(\mathfrak{a})$ is a linear function on $\mathrm{QM}_{\eta}(X',E)$.
\end{proof}

\thref{1'} implies the following lemma.

\begin{lem}[{cf. \cite[Lemma 6.7]{JM12}}]\thlabel{2'}
Let $(X,\Delta)$ be a pair, and let $\mathfrak{a},\mathfrak{q}$ ideals in $\O_X$. Then
$$ \lct^{\mathfrak{q}}(X,\Delta;\mathfrak{a})=\inf_{\nu\in \mathrm{Val}^*_X}\frac{A_{X,\Delta}(\nu)+\nu(\mathfrak{q})}{\nu(\mathfrak{a})}.$$
\end{lem}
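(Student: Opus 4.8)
The plan is to prove the two inequalities separately, using \thref{0} as the bridge between the valuative formulation and divisors on a fixed log resolution, and using the retraction map $r_{(X',E)}$ to pass from an arbitrary valuation to a quasi-monomial one.

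First, for the inequality $\lct^{\mathfrak{q}}(X,\Delta;\mathfrak{a})\ge \inf_{\nu\in\mathrm{Val}^*_X}\frac{A_{X,\Delta}(\nu)+\nu(\mathfrak{q})}{\nu(\mathfrak{a})}$, I would observe that every prime divisor $E$ over $X$ gives a valuation $\ord_E\in\mathrm{Val}^*_X$ with $A_{X,\Delta}(\ord_E)=A_{X,\Delta}(E)$ (the standard comparison of the valuative and classical log discrepancies, cf. \cite[Chapter~5]{JM12}, \cite[Chapter~7]{Blu18}). Since the infimum in \thref{0} ranges over such divisors, the set $\left\{\frac{A_{X,\Delta}(E)+\ord_E(\mathfrak{q})}{\ord_E(\mathfrak{a})}\right\}$ is contained in $\left\{\frac{A_{X,\Delta}(\nu)+\nu(\mathfrak{q})}{\nu(\mathfrak{a})}:\nu\in\mathrm{Val}^*_X\right\}$, so the desired inequality is immediate from \thref{0}.

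For the reverse inequality, fix $\nu\in\mathrm{Val}^*_X$ with $\nu(\mathfrak{a})\ne 0$ (otherwise the quotient is $+\infty$). Choose a log resolution $f:X'\to X$ of $(X,\Delta)$ and of $\mathfrak{a}\cdot\mathfrak{q}$, and let $E=\sum_i E_i$ be a reduced snc divisor on $X'$ whose support contains $\mathrm{Supp}(f^{-1}_*\Delta)\cup\mathrm{Supp}((\mathfrak{a}\cdot\mathfrak{q})\cdot\O_{X'})$, so that $(X',E)$ is a log smooth model over $X$. Put $\nu':=r_{(X',E)}(\nu)$, a quasi-monomial valuation lying in $\mathrm{QM}_{\eta}(X',E)$ for the generic point $\eta$ of some stratum $E_{i_1}\cap\cdots\cap E_{i_k}$, say $\nu'=\nu_{(X',E),\alpha}$ with $\alpha_j=\nu(E_{i_j})$. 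Three observations then finish the argument: (1) $A_{X,\Delta}(\nu)\ge A_{X,\Delta}(\nu')$, directly from the definition of $A_{X,\Delta}(\nu)$ as a supremum of $A_{X,\Delta}(r_{(X',E)}(\nu))$ over log smooth models; (2) $\nu(\mathfrak{a})=\nu'(\mathfrak{a})$ and $\nu(\mathfrak{q})=\nu'(\mathfrak{q})$, since $\mathfrak{a}\cdot\O_{X'}$ and $\mathfrak{q}\cdot\O_{X'}$ are snc, so — exactly as in the proof of \thref{1'}, now at the stratum through the center of $\nu$ rather than the deepest one — both sides equal $\sum_j\nu(E_{i_j})\,\ord_{E_{i_j}}(\mathfrak{a})$, resp.\ the analogous sum for $\mathfrak{q}$; and (3) using $\nu'(\mathfrak{a})=\sum_j\alpha_j\ord_{E_{i_j}}(\mathfrak{a})$, $\nu'(\mathfrak{q})=\sum_j\alpha_j\ord_{E_{i_j}}(\mathfrak{q})$ and $A_{X,\Delta}(\nu')=\sum_j\alpha_j A_{X,\Delta}(E_{i_j})$, the quotient $\frac{A_{X,\Delta}(\nu')+\nu'(\mathfrak{q})}{\nu'(\mathfrak{a})}$ is a weighted mediant of the ratios $\frac{A_{X,\Delta}(E_{i_j})+\ord_{E_{i_j}}(\mathfrak{q})}{\ord_{E_{i_j}}(\mathfrak{a})}$ over those $j$ with $\ord_{E_{i_j}}(\mathfrak{a})>0$, hence bounded below by the minimum of these ratios, which is $\ge\lct^{\mathfrak{q}}(X,\Delta;\mathfrak{a})$ by \thref{0} (each such $E_{i_j}$ lies in $\mathrm{Supp}((\mathfrak{a}\cdot\mathfrak{q})\cdot\O_{X'})$). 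Chaining (1)--(3) gives $\frac{A_{X,\Delta}(\nu)+\nu(\mathfrak{q})}{\nu(\mathfrak{a})}\ge\lct^{\mathfrak{q}}(X,\Delta;\mathfrak{a})$, and taking the infimum over $\nu$ completes the proof.

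The step I expect to be the main obstacle is observation (2): that the retraction $r_{(X',E)}$ preserves the value of any ideal that becomes snc on the model $X'$. This needs a careful treatment of the center of $\nu$ on $X'$ and of the fact that such an ideal is locally monomial there, so that $\nu$ — being additive on products and vanishing on units of the local ring at its center — recovers the value from the data $\ord_{E_i}(\mathfrak{a})$ and $\nu(E_i)$ alone; this is precisely the computation underlying \thref{1'}, extended to an arbitrary stratum. A secondary point to handle with care is the identity $A_{X,\Delta}(\ord_E)=A_{X,\Delta}(E)$ used in the first inequality, which I would either cite from \cite{JM12}, \cite{Blu18} or re-derive from the definition of $A_{X,\Delta}$ by comparing retractions onto models dominating a given one.
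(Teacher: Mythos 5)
Your proof is correct, and shares the skeleton of the paper's argument up to the last step: both proofs use \thref{0} to express $\lct^{\mathfrak{q}}$ as a divisorial infimum on a log resolution of $(X,\Delta)$ and $\mathfrak{a}\cdot\mathfrak{q}$, and both pass from an arbitrary valuation to the quasi-monomial valuation $r_{(X',E)}(\nu)$ using the monotonicity of $A_{X,\Delta}$ and of ideal values under retraction. Where you diverge is in the finishing move. The paper sets $L$ equal to the valuative infimum, invokes compactness of $\mathcal{D}(E)\cap\mathrm{QM}_{\eta}(X',E)$ and lower semi-continuity of $\chi$ to show the infimum is attained by some $\nu_{(X',E),\alpha}$, and then uses linearity (via \thref{1'}) of $\phi=A_{X,\Delta}+\ord(\mathfrak{q})-L\cdot\ord(\mathfrak{a})$ at that minimizer to extract a divisor $E_{i_j}$ with $\chi(\ord_{E_{i_j}})=L$. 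You instead apply the weighted-mediant inequality directly to every quasi-monomial valuation, getting $\chi(\nu_{(X',E),\alpha})\ge\min_j\chi(\ord_{E_{i_j}})\ge\lct^{\mathfrak{q}}$ with no need to produce a minimizer; this avoids the compactness and semi-continuity input entirely and is arguably cleaner, although the two steps are closely related (the mediant bound is exactly the observation that a nonnegative linear form vanishing at an interior point of the cone must vanish on the generators with positive weight). The step you flag as the likely obstacle, that $r_{(X',E)}(\nu)(\mathfrak{a})=\nu(\mathfrak{a})$ and $r_{(X',E)}(\nu)(\mathfrak{q})=\nu(\mathfrak{q})$ once these ideals become monomial with respect to $E$ on $X'$, is precisely \cite[Corollary 4.8]{JM12}, which the paper cites at the corresponding point (together with Corollary 5.3 there, giving your observation (1)); you can simply cite it rather than re-derive it. One caveat worth making explicit, which applies equally to the paper's argument: when you drop the indices $j$ with $\ord_{E_{i_j}}(\mathfrak{a})=0$, you are using that $A_{X,\Delta}(E_{i_j})+\ord_{E_{i_j}}(\mathfrak{q})\ge 0$, and the paper's compactness of $\mathcal{D}(E)\cap\mathrm{QM}_{\eta}$ rests on the same positivity of the $A_{X,\Delta}(E_{i_j})$; both are automatic in the klt range where the lemma is ultimately applied, but neither is literally guaranteed by the stated hypothesis ``pair''.
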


\begin{proof}
For simplicity, let us denote by
$$ L:=\inf_{\nu\in \mathrm{Val}^*_X}\frac{A_{X,\Delta}(\nu)+\nu(\mathfrak{q})}{\nu(\mathfrak{a})},$$
and let $E_1,\cdots,E_r$ be distinct prime divisors on $X'$ such that $E_1\cup \cdots \cup E_r=\Supp f^{-1}_*\Delta\cup \Supp (\mathfrak{a}\cdot \mathfrak{q})\cdot\O_{X'}$. Let $E:=\sum^r_{i=1} E$.

Let $f:X'\to X$ be a log resolution of both $(X,\Delta)$ and $\mathfrak{a}\cdot \mathfrak{q}$, and let us define
$$ \chi(\nu):=\frac{A_{X,\Delta}(\nu)+\nu(\mathfrak{q})}{\nu(\mathfrak{a})}\text{ for any }\nu\in \mathrm{Val}^*_X.$$
Then by \cite[Corollary 4.8, Corollary 5.3]{JM12},
$$ \left(\chi\circ r_{(X',E)}\right)(\nu)\le \chi(\nu)\text{ for any }\nu\in \mathrm{Val}_X.$$
Therefore,
$$ L=\inf_{\nu\in \mathrm{Val}^{*}_X}\chi(\nu)\ge \inf_{\nu\in \mathrm{QM}(Y,E)\setminus \{0\}}\chi(\nu)\ge \inf_{\nu\in \mathrm{Val}^{*}_X}\chi(\nu)=L,$$
and
$$L=\inf_{\nu\in \mathrm{QM}(X',E)\setminus \{0\}}\chi(\nu).$$
Let $\mathcal{D}(E)\subseteq \mathrm{QM}(X',E)$ be the set of those valuations in $\mathrm{QM}(X',E)$ 
satisfying $A_{X,\Delta}(\nu) = 1$. Then
$$L=\inf_{\nu\in \mathcal{D}(E)}\chi(\nu).$$
Moreover, there exists a subset $\{i_1,\cdots,i_k\}\subseteq \{1,\cdots,r\}$ with $1\le i_1<\cdots<i_k\le r$ and the generic point $\eta$ of $E_{i_1}\cap \cdots \cap E_{i_k}$ such that
$$L=\inf_{\nu\in \mathcal{D}(E)\cap \mathrm{QM}_{\eta}(X',E)}\chi(\nu).$$
Since $\chi:\mathcal{D}(E)\cap \mathrm{QM}_{\eta}(X',E)\to \R$ is lower semi-continuous and $\mathcal{D}(E)\cap \mathrm{QM}_{\eta}(X',E)$ is compact in the subspace topology
$$ \mathcal{D}(E)\cap \mathrm{QM}_{\eta}(X',E)\subseteq \mathrm{Val}_X,$$
there is some $\alpha=(\alpha_1,\cdots,\alpha_k)\in \R^k_{\ge 0}$ such that
\begin{equation}\label{M}
L=\chi(\nu_{(X',E),\alpha})
\end{equation}

Define
$$ \phi(\nu):=A_{X,\Delta}(\nu)+\nu(\mathfrak{q})-L\cdot \nu(\mathfrak{a}).$$
By \thref{1'},
$$ \phi:\mathrm{QM}_{\eta}(X',E)\to \R$$
is a linear function.
\begin{itemize}
    \item[(i)] We know $\phi(\nu)\ge 0$ for any $\nu\in \mathrm{QM}_{\eta}(X',E)$, and
    \item[(ii)] We also note that $L=\chi(\nu)$ if and only if $\nu=0$ or $\phi(\nu)=0$.
\end{itemize}
Thus, by combining (i) and (ii) with (\ref{M}), we know that for any $j$ with $\alpha_j\ne 0$, $L=\chi\left(\ord_{E_{i_j}}\right)$. Hence, by \thref{0}, the assertion follows.
\end{proof}

Let $\mathfrak{a}_{\bullet}$ be a graded sequence of ideals in $\O_X$. For a positive integer $m$, define
$$\mathfrak{b}_m:=\mathcal{J}(X,\Delta;\mathfrak{a}^m_{\bullet}).$$
Moreover, let us define
$$ \nu\left(\mathfrak{a}_{\bullet}\right):=\lim_{\substack{m\to \infty\\m\in \Phi(\mathfrak{a}_{\bullet})}}\frac{\nu\left(\mathfrak{a}_m\right)}{m}.$$
By \cite[Lemma 2.3]{JM12}, the limit exists.

Recall the following proposition from \cite{Xu}.

\begin{prop}[{cf. \cite[Lemma 1.58]{Xu}}]\thlabel{cor:tak}
Let $(X,\Delta)$ be a klt pair and $\mathfrak{a}_{\bullet}$ a graded sequence of ideals of $\O_X$. There is a nonzero ideal $I$ in $\O_X$ which depends only on $(X,\Delta)$ such that for any $m,m'\in \Phi(\mathfrak{a}_{\bullet})$, we have $I\cdot \mathfrak{b}_{m+m'}\subseteq \mathfrak{b}_m\cdot \mathfrak{b}_{m'}$.
\end{prop}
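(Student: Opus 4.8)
The plan is to reduce the asymptotic statement to a single non-asymptotic subadditivity inequality evaluated at one level $k$, and then to feed in a uniform subadditivity theorem for multiplier ideals on the klt pair $(X,\Delta)$ (due to Takagi; see also work of Eisenstein). Fix $m,m'\in\Phi(\mathfrak a_\bullet)$. Pick $p\in\Phi(\mathfrak a_\bullet)$ lying in the subsemigroup from the definition of a graded sequence; using the semigroup inclusions $\mathfrak a_{n'p}\supseteq\mathfrak a_{np}^{\,n'/n}$ for $n\mid n'$ together with the identity $\mathcal J(X,\Delta;(\mathfrak c^{a})^{b})=\mathcal J(X,\Delta;\mathfrak c^{ab})$ (immediate from the log-resolution formula), the ideals $\mathcal J(X,\Delta;\mathfrak a_{np}^{(m+m')/(np)})$ form a non-decreasing family along the divisibility order on $n$. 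By the Noetherian property of $\O_X$ this chain stabilizes, and its limit is the maximum defining $\mathfrak b_{m+m'}$; hence $\mathfrak b_{m+m'}=\mathcal J(X,\Delta;\mathfrak a_k^{(m+m')/k})$ for some $k\in\Phi(\mathfrak a_\bullet)$. Since $\mathfrak a_k^{m+m'}=\mathfrak a_k^{m}\cdot\mathfrak a_k^{m'}$, the same bookkeeping of the resolution divisor gives
$$\mathfrak b_{m+m'}=\mathcal J\bigl(X,\Delta;(\mathfrak a_k^{m})^{1/k}\cdot(\mathfrak a_k^{m'})^{1/k}\bigr).$$

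Next I would apply the uniform subadditivity theorem for $(X,\Delta)$: there is a nonzero ideal $I=I(X,\Delta)\subseteq\O_X$, depending only on $(X,\Delta)$, such that $I\cdot\mathcal J(X,\Delta;\mathfrak c^{\lambda}\mathfrak d^{\mu})\subseteq\mathcal J(X,\Delta;\mathfrak c^{\lambda})\cdot\mathcal J(X,\Delta;\mathfrak d^{\mu})$ for all ideals $\mathfrak c,\mathfrak d$ in $\O_X$ and all $\lambda,\mu>0$. Applying this with $\mathfrak c=\mathfrak a_k^{m}$, $\mathfrak d=\mathfrak a_k^{m'}$, $\lambda=\mu=1/k$, and using that $\mathfrak b_m$, $\mathfrak b_{m'}$ are maxima (so $\mathcal J(X,\Delta;\mathfrak a_k^{m/k})\subseteq\mathfrak b_m$ and $\mathcal J(X,\Delta;\mathfrak a_k^{m'/k})\subseteq\mathfrak b_{m'}$), one obtains
$$I\cdot\mathfrak b_{m+m'}\subseteq\mathcal J(X,\Delta;\mathfrak a_k^{m/k})\cdot\mathcal J(X,\Delta;\mathfrak a_k^{m'/k})\subseteq\mathfrak b_m\cdot\mathfrak b_{m'}.$$
Because $I$ is independent of $m,m',k$ and of $\mathfrak a_\bullet$, the same $I$ works for every pair $m,m'\in\Phi(\mathfrak a_\bullet)$, which is exactly the assertion.

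The main obstacle is the uniform subadditivity input, which carries essentially all of the difficulty: on a singular variety (even a $\Q$-Gorenstein one) subadditivity is not exact, and the point is to produce a single nonzero correction ideal valid for all ideals and all exponents at once. The two reasonable routes are to quote Takagi's formulas for multiplier ideals on singular varieties, or to reprove the case needed here: fix a log resolution $\pi\colon Y\to X$ of $(X,\Delta)$ and of the relevant ideals, run the classical diagonal/fibre-product proof of subadditivity on the smooth $Y$ (where it holds exactly), push down along $\pi$, and absorb the failure of $\pi_*$ to commute with the operations involved and the non-integral part of the relative discrepancy into $I$; one then checks that $I$ may be taken inside a Jacobian/conductor-type ideal depending only on $(X,\Delta)$, and that it is nonzero precisely because $(X,\Delta)$ is klt. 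Everything else above --- the stabilization of the asymptotic multiplier ideal at a divisible level and the elementary behaviour of multiplier ideals under powers of ideals --- is routine.
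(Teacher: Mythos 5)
The paper does not actually prove this statement: it is recorded as a citation to \cite[Lemma 1.58]{Xu} and no argument is given in the text. Your proposal reconstructs what is essentially the argument of that reference, so there is no genuine divergence to report; I will instead comment on correctness.

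Your reduction is sound. The family $\{\mathcal J(X,\Delta;\mathfrak a_{q}^{(m+m')/q})\}_{q\in\Phi(\mathfrak a_\bullet)}$ is directed upward (via $\mathfrak a_{qq'}\supseteq\mathfrak a_q^{q'}$), so by Noetherianity the maximum is attained at some single level $k$, and the identity $\mathcal J\bigl(X,\Delta;(\mathfrak a_k^m)^{1/k}(\mathfrak a_k^{m'})^{1/k}\bigr)=\mathcal J(X,\Delta;\mathfrak a_k^{(m+m')/k})=\mathfrak b_{m+m'}$ is immediate from the log-resolution description. Feeding this into a uniform subadditivity statement of the form $I\cdot\mathcal J(X,\Delta;\mathfrak c^{\lambda}\mathfrak d^{\mu})\subseteq\mathcal J(X,\Delta;\mathfrak c^{\lambda})\cdot\mathcal J(X,\Delta;\mathfrak d^{\mu})$, with $I$ depending only on $(X,\Delta)$, and using that $\mathcal J(X,\Delta;\mathfrak a_k^{m/k})\subseteq\mathfrak b_m$ by maximality, gives exactly $I\cdot\mathfrak b_{m+m'}\subseteq\mathfrak b_m\cdot\mathfrak b_{m'}$. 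This is precisely the structure of the proof in Xu's book.

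You correctly isolate the genuine input: the uniform subadditivity theorem for multiplier ideals of a klt pair on a possibly singular variety, with a single nonzero correction ideal valid for all ideals and exponents. This is a real theorem (Takagi's subadditivity formula for log pairs; Eisenstein's thesis treats the $\Q$-Gorenstein case with the Jacobian ideal as corrector), and it is the result Xu's lemma rests on, so the attribution is right. One small point you should make explicit if you write this up: the version you quote must be the one that carries the boundary $\Delta$ through both factors on the right, and you should check that the correction ideal in the cited theorem (Jacobian-type ideal, possibly twisted by $\mathcal J(X,\Delta;\mathcal O_X)$) is nonzero and depends only on $(X,\Delta)$; here the klt hypothesis ensures $\mathcal J(X,\Delta;\mathcal O_X)=\mathcal O_X$ and the Jacobian ideal is nonzero on any variety, so there is no issue. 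Your sketch of how one would reprove uniform subadditivity via the diagonal on a resolution is the right picture but somewhat underspecified; since the result is in the literature, a citation is the cleaner route, as the paper itself does.
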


\begin{lem}[{cf. \cite[Lemma 1.59 (ii)]{Xu}}] \thlabel{3}
Let $(X,\Delta)$ be a klt pair, $\mathfrak{q}$ an ideal in $\O_X$, and $\mathfrak{a}_{\bullet}$ a graded sequence of ideals on $X$. Then we have
$$\lim_{m\to \infty}m\cdot \lct^{\mathfrak{q}}\left(X,\Delta;\mathfrak{b}_m\right)=\lct^{\mathfrak{q}}(X,\Delta;\mathfrak{a}_{\bullet}).$$
\end{lem}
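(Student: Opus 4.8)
The plan is to prove the equality by establishing the two inequalities separately, exploiting the subadditivity-up-to-$I$ property from \thref{cor:tak}. Recall that $\mathfrak{b}_m=\mathcal{J}(X,\Delta;\mathfrak{a}_\bullet^m)$, and note first that $\{\mathfrak{b}_m\}$ is itself (up to the fixed ideal $I$) a graded sequence, so $\nu(\mathfrak{b}_\bullet)$ and $\lct^{\mathfrak{q}}(X,\Delta;\mathfrak{b}_\bullet)=\sup_m m\cdot\lct^{\mathfrak{q}}(X,\Delta;\mathfrak{b}_m)$ make sense. The key identity we will use is a comparison between $\nu(\mathfrak{a}_\bullet)$ and $\nu(\mathfrak{b}_\bullet)$: since $\mathfrak{a}_m^{1/m}$ and $\mathfrak{b}_m=\mathcal{J}(X,\Delta;\mathfrak{a}_\bullet^m)$ both ``converge'' to the same asymptotic object, one expects $\nu(\mathfrak{a}_\bullet)=\nu(\mathfrak{b}_\bullet)$ for every $\nu\in\mathrm{Val}_X$. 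One inclusion is immediate from the containment $\mathfrak{a}_m^{1/m}\subseteq$ something controlling $\mathfrak{b}_\bullet$; the other uses that $\mathfrak{b}_m\subseteq\mathcal{J}(X,\Delta;\mathfrak{a}_m^{1/m})$-type bounds together with a uniform bound on $A_{X,\Delta}(\nu)$ on the relevant valuations.

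Concretely, I would first record that, by \thref{2'} applied to the graded sequence $\mathfrak{b}_\bullet$ (legitimate after absorbing the ideal $I$ of \thref{cor:tak}, whose contribution $\nu(I)/m\to 0$ washes out in the limit), one has
$$
\lim_{m\to\infty} m\cdot\lct^{\mathfrak{q}}(X,\Delta;\mathfrak{b}_m)
=\lct^{\mathfrak{q}}(X,\Delta;\mathfrak{b}_\bullet)
=\inf_{\nu\in\mathrm{Val}^*_X}\frac{A_{X,\Delta}(\nu)+\nu(\mathfrak{q})}{\nu(\mathfrak{b}_\bullet)}.
$$
Then I would show $\nu(\mathfrak{b}_\bullet)=\nu(\mathfrak{a}_\bullet)$ for all $\nu$: the inequality $\nu(\mathfrak{b}_\bullet)\le\nu(\mathfrak{a}_\bullet)$ follows because $\mathfrak{a}_{km}\subseteq\mathcal{J}(X,\Delta;\mathfrak{a}_k^m)\cdot(\text{something})$ forces $\mathfrak{b}_m\supseteq$ a power of $\mathfrak{a}$ up to a fixed ideal; the reverse $\nu(\mathfrak{a}_\bullet)\le\nu(\mathfrak{b}_\bullet)$ follows from $\mathfrak{b}_m=\mathcal{J}(X,\Delta;\mathfrak{a}_\bullet^m)=\mathcal{J}(X,\Delta;\mathfrak{a}_{m_0}^{m/m_0})$ for suitable large $m_0$ together with $\mathcal{J}(X,\Delta;\mathfrak{c})\subseteq\overline{\mathfrak{c}}$-style integral-closure bounds, which preserve valuations. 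Combining, the right-hand side equals $\inf_\nu\frac{A_{X,\Delta}(\nu)+\nu(\mathfrak{q})}{\nu(\mathfrak{a}_\bullet)}$, and by \thref{3} in the form already proved (or by \thref{2'} plus the definition $\lct^{\mathfrak{q}}(X,\Delta;\mathfrak{a}_\bullet)=\sup_m m\cdot\lct^{\mathfrak{q}}(X,\Delta;\mathfrak{a}_m)$ and \cite[Lemma 2.3]{JM12}) this infimum is exactly $\lct^{\mathfrak{q}}(X,\Delta;\mathfrak{a}_\bullet)$.

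An alternative, more hands-on route avoids the valuative detour: prove $\le$ by noting $\mathfrak{b}_m=\mathcal{J}(X,\Delta;\mathfrak{a}_\bullet^m)\subseteq\mathcal{J}(X,\Delta;\mathfrak{a}_m^{m/m})=\mathcal{J}(X,\Delta;\mathfrak{a}_m)$ is false as stated, so instead use $\mathfrak{b}_{m}\supseteq\mathcal{J}(X,\Delta;\mathfrak{a}_m)$ from the definition of the asymptotic multiplier ideal, hence $\lct^{\mathfrak{q}}(X,\Delta;\mathfrak{b}_m)\le\lct^{\mathfrak{q}}(X,\Delta;\mathfrak{a}_m)$; multiplying by $m$ and taking $\limsup$ gives $\limsup_m m\cdot\lct^{\mathfrak{q}}(X,\Delta;\mathfrak{b}_m)\le\sup_m m\cdot\lct^{\mathfrak{q}}(X,\Delta;\mathfrak{a}_m)=\lct^{\mathfrak{q}}(X,\Delta;\mathfrak{a}_\bullet)$. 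For $\ge$, use \thref{cor:tak}: from $I\cdot\mathfrak{b}_{m+m'}\subseteq\mathfrak{b}_m\cdot\mathfrak{b}_{m'}$ one gets, for any fixed $\ell$ and $m=q\ell+s$, a containment $I^{q}\cdot\mathfrak{b}_m\subseteq\mathfrak{b}_\ell^{q}\cdot\mathfrak{b}_s$, hence a lower bound $\ell\cdot\lct^{\mathfrak{q}}(X,\Delta;\mathfrak{b}_\ell)\le\liminf_m m\cdot\lct^{\mathfrak{q}}(X,\Delta;\mathfrak{b}_m)+o(1)$ using subadditivity of $\lct^{\mathfrak{q}}$ with respect to products and the fact that $\lct^{\mathfrak{q}\cdot I^{q}}$ differs from $\lct^{\mathfrak{q}}$ by $O(q)$ controlled against $m$; taking $\ell\to\infty$ and using that $\mathfrak{b}_\ell\to$ the asymptotic ideal yields $\lct^{\mathfrak{q}}(X,\Delta;\mathfrak{a}_\bullet)\le\liminf_m m\cdot\lct^{\mathfrak{q}}(X,\Delta;\mathfrak{b}_m)$.

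The main obstacle I expect is the interplay with the fixed ideal $I$ from \thref{cor:tak}: one must check that its presence perturbs each $\lct^{\mathfrak{q}}(X,\Delta;\mathfrak{b}_m)$ by an amount that vanishes after multiplying by $m$ and passing to the limit — i.e., that $m\cdot\lvert\lct^{\mathfrak{q}}(X,\Delta;\mathfrak{b}_m)-\lct^{\mathfrak{q}\cdot I}(X,\Delta;\mathfrak{b}_m)\rvert\to 0$, or more precisely that the error terms accumulated over $q\approx m/\ell$ subadditivity steps stay $o(m)$. This is where \cite[Lemma 2.3]{JM12} (existence of the limit $\nu(\mathfrak{a}_\bullet)$) and the klt hypothesis (finiteness of $A_{X,\Delta}(\nu)$ near the computing valuations, guaranteeing the $\inf$ in \thref{2'} is attained and positive) do the real work; the rest is bookkeeping with the definitions of $\lct^{\mathfrak{q}}$ and the asymptotic multiplier ideal.
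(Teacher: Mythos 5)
Your proposal contains a genuine gap in the $\le$ direction, which is the nontrivial half of the lemma, and a circularity in the valuative route.

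In your ``alternative, more hands-on route'' for $\le$, you invoke $\mathfrak{b}_m\supseteq\mathcal{J}(X,\Delta;\mathfrak{a}_m)$ and conclude $\lct^{\mathfrak{q}}(X,\Delta;\mathfrak{b}_m)\le\lct^{\mathfrak{q}}(X,\Delta;\mathfrak{a}_m)$. This has the monotonicity backwards: from \thref{0}, enlarging the ideal decreases each $\ord_E$ and hence \emph{increases} the log canonical threshold, so $\mathfrak{b}_m\supseteq\mathcal{J}(X,\Delta;\mathfrak{a}_m)\supseteq\mathfrak{a}_m$ gives $\lct^{\mathfrak{q}}(X,\Delta;\mathfrak{b}_m)\ge\lct^{\mathfrak{q}}(X,\Delta;\mathfrak{a}_m)$. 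That is precisely the easy $\ge$ inequality (which the paper dispatches in one line from $\mathfrak{a}_m\subseteq\mathfrak{b}_m$), not the $\le$ you need. The hard direction requires an \emph{upper} bound on $\lct^{\mathfrak{q}}(X,\Delta;\mathfrak{b}_m)$, i.e.\ a \emph{lower} bound on $\ord_E(\mathfrak{b}_m)$ against a divisor $E$ chosen to nearly compute $\lct^{\mathfrak{q}}(X,\Delta;\mathfrak{a}_\bullet)$. The paper does exactly this: fix $E$ computing $\lct^{\mathfrak{q}}(X,\Delta;\mathfrak{a}_{mp})$, use the standard multiplier-ideal estimate to get $\tfrac{1}{m}\ord_E(\mathfrak{b}_m)\ge\tfrac{1}{mp}\ord_E(\mathfrak{a}_{mp})-\tfrac{1}{m}\bigl(A_{X,\Delta}(E)+\ord_E(\mathfrak{q})\bigr)$, plug into \thref{0}, and let $p\to\infty$ then $m\to\infty$.

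Your first route is also problematic: you propose applying \thref{2'} ``to the graded sequence $\mathfrak{b}_\bullet$,'' but \thref{2'} is a statement about a \emph{single} ideal. The graded-sequence analogue, expressing $\lct^{\mathfrak{q}}(X,\Delta;\mathfrak{a}_\bullet)$ as an infimum of $\frac{A_{X,\Delta}(\nu)+\nu(\mathfrak{q})}{\nu(\mathfrak{a}_\bullet)}$, is \thref{4}, whose proof in the paper \emph{uses} the present \thref{3}; invoking it here is circular. The surrounding claims ($\nu(\mathfrak{b}_\bullet)=\nu(\mathfrak{a}_\bullet)$ via ``integral-closure bounds,'' absorbing $I$, etc.) are also left at the level of intent rather than argument, and you correctly flag the $I$-bookkeeping as the main obstacle without resolving it. The fix is to abandon the valuative reformulation for this lemma and prove the $\le$ inequality directly via the divisorial estimate above; \thref{cor:tak} is then only needed later, in \thref{4}.
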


\begin{proof}
If $\lct^{\mathfrak{q}}(X,\Delta;\mathfrak{a}_{\bullet})=\infty$, then the claimed equality holds. Indeed, for any positive integer $m$, since $\mathfrak{a}_m\subseteq \mathfrak{b}_m$, we have 
\begin{equation} \label{J}
\lct^{\mathfrak{q}}\left(X,\Delta;\mathfrak{b}_m\right)\ge \lct^{\mathfrak{q}}\left(X,\Delta;\mathfrak{a}_m\right).
\end{equation}
Hence we may assume that $\lct^{\mathfrak{q}}(X,\Delta;\mathfrak{a}_{\bullet})<\infty$. Suppose $p$ is a positive integer. By \thref{0}, there is a prime divisor $E$ over $X$ such that
$$
  \lct^{\mathfrak{q}}(X,\Delta;\mathfrak{a}_{mp})=\frac{A_{X,\Delta}(E)+\nu(\mathfrak{q})}{\nu\left(\mathfrak{a}_{mp}\right)}.
$$
By \thref{0} again,
$$
  m \cdot
  \lct^{\mathfrak{q}}(X,\Delta;\mathfrak{b}_m)\le\frac{A_{X,\Delta}(E) + \ord_E(\mathfrak{q})}{\tfrac{1}{m}\ord_E(\mathfrak{b}_m)},
$$
and by definition of $\mathfrak{b}_m$,
$$
  \frac{1}{m}\ord_E(\mathfrak{b}_m)\ge\frac{1}{mp}\ord_E\left(\mathfrak{a}_{mp}\right)-
  \frac{1}{m}\left(A_{X,\Delta}(E)+\ord_E(\mathfrak{q})\right).$$
for any positive integer $p$. Hence
$$
m\cdot\lct^{\mathfrak{q}}(X,\Delta;\mathfrak{b}_m)\le\frac{A_{X,\Delta}(E)+\ord_E(\mathfrak{q})}{\tfrac{1}{mp}\ord_E(\mathfrak{a}_{mp})-\tfrac{1}{m}\left(A_{X,\Delta}(E)+\ord_E(\mathfrak{q})\right)}=\frac{mp\cdot\lct^{\mathfrak{q}}(X,\Delta;\mathfrak{a}_{mp})}{1-p\cdot\lct^{\mathfrak{q}}(X,\Delta;\mathfrak{a}_{mp})}.
$$
Letting $p \to \infty$ and $m\to \infty$ successively, we conclude
$$
  \lim_{m \to \infty}
  \left(m \cdot
    \lct^{\mathfrak{q}}(X,\Delta;\mathfrak{b}_m)
  \right)\le\lct^{\mathfrak{q}}(X,\Delta;\mathfrak{a}_\bullet),
$$
and the reverse inequality is derived from (\ref{J}). This completes the proof.
\end{proof}

We prove the following theorem.

\begin{thm}\thlabel{4}
Let $(X,\Delta)$ be a klt pair, $\mathfrak{q}$ an ideal in $\O_X$, and $\mathfrak{a}_{\bullet}$ a graded sequence of ideals in $\O_X$. Then
$$
  \lct^{\mathfrak{q}}(X,\Delta;\mathfrak{a}_{\bullet})
  =
  \inf_{\nu \in \mathrm{Val}_X^*}
  \frac{A_{X,\Delta}(\nu)+\nu(\mathfrak{q})}{\nu(\mathfrak{a}_{\bullet})}.
$$
\end{thm}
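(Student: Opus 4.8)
The plan is to sandwich $\lct^{\mathfrak{q}}(X,\Delta;\mathfrak{a}_{\bullet})$ between the two sides of the asserted identity, using the single‑ideal valuation formula \thref{2'} together with the limit formula \thref{3}. Write $L:=\inf_{\nu\in\mathrm{Val}^*_X}\frac{A_{X,\Delta}(\nu)+\nu(\mathfrak{q})}{\nu(\mathfrak{a}_{\bullet})}$. Since $A_{X,\Delta}(t\nu)=tA_{X,\Delta}(\nu)$, $(t\nu)(\mathfrak{q})=t\nu(\mathfrak{q})$ and $(t\nu)(\mathfrak{a}_{\bullet})=t\nu(\mathfrak{a}_{\bullet})$ for $t>0$, the ratio defining $L$ is scale‑invariant, so in the estimates below I may normalize a valuation by $A_{X,\Delta}(\nu)+\nu(\mathfrak{q})=1$ whenever convenient (assuming for simplicity that $\Phi(\mathfrak{a}_\bullet)=\Z_{>0}$; otherwise all limits are taken along $\Phi(\mathfrak{a}_\bullet)$).

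For the inequality $\lct^{\mathfrak{q}}(X,\Delta;\mathfrak{a}_{\bullet})\le L$: by definition $\lct^{\mathfrak{q}}(X,\Delta;\mathfrak{a}_{\bullet})=\sup_{m}m\cdot\lct^{\mathfrak{q}}(X,\Delta;\mathfrak{a}_m)$, and by \thref{2'} each $m\cdot\lct^{\mathfrak{q}}(X,\Delta;\mathfrak{a}_m)=\inf_{\nu}\frac{A_{X,\Delta}(\nu)+\nu(\mathfrak{q})}{\frac1m\nu(\mathfrak{a}_m)}$. Because $\mathfrak{a}_m\cdot\mathfrak{a}_{m'}\subseteq\mathfrak{a}_{m+m'}$, the sequence $m\mapsto\nu(\mathfrak{a}_m)$ is subadditive, so $\frac1m\nu(\mathfrak{a}_m)\ge\nu(\mathfrak{a}_{\bullet})$ for every $m$ and every $\nu$; hence each term in that infimum is $\le\frac{A_{X,\Delta}(\nu)+\nu(\mathfrak{q})}{\nu(\mathfrak{a}_{\bullet})}$, which gives $m\cdot\lct^{\mathfrak{q}}(X,\Delta;\mathfrak{a}_m)\le L$ for all $m$, and the claim follows by taking the supremum.

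For the reverse inequality I would invoke \thref{3}, $\lct^{\mathfrak{q}}(X,\Delta;\mathfrak{a}_{\bullet})=\lim_{m}m\cdot\lct^{\mathfrak{q}}(X,\Delta;\mathfrak{b}_m)$ with $\mathfrak{b}_m=\mathcal{J}(X,\Delta;\mathfrak{a}^m_{\bullet})$, and then \thref{2'} applied to the single ideal $\mathfrak{b}_m$: $m\cdot\lct^{\mathfrak{q}}(X,\Delta;\mathfrak{b}_m)=\inf_{\nu}\frac{A_{X,\Delta}(\nu)+\nu(\mathfrak{q})}{\frac1m\nu(\mathfrak{b}_m)}$. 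Thus it suffices to bound $\frac1m\nu(\mathfrak{b}_m)$ from above by $\nu(\mathfrak{a}_{\bullet})$ plus a uniformly small error. Here \thref{cor:tak} is the key input: iterating $I\cdot\mathfrak{b}_{m+m'}\subseteq\mathfrak{b}_m\cdot\mathfrak{b}_{m'}$ gives $I^{p-1}\cdot\mathfrak{b}_{mp}\subseteq\mathfrak{b}_m^{\,p}$, hence $p\,\nu(\mathfrak{b}_m)\le\nu(\mathfrak{b}_{mp})+(p-1)\nu(I)\le\nu(\mathfrak{a}_{mp})+(p-1)\nu(I)$, the last step using $\mathfrak{a}_{mp}\subseteq\mathfrak{b}_{mp}$, which holds because $(X,\Delta)$ is klt, since $\mathfrak{b}_{mp}\supseteq\mathcal{J}(X,\Delta;\mathfrak{a}_{mp})\supseteq\mathfrak{a}_{mp}$. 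Dividing by $mp$ and letting $p\to\infty$ yields $\frac1m\nu(\mathfrak{b}_m)\le\nu(\mathfrak{a}_{\bullet})+\frac1m\nu(I)$. Now the ideal $I$ from \thref{cor:tak} is fixed and nonzero, so $c_0:=\lct^{\O_X}(X,\Delta;I)$ is positive (again because $(X,\Delta)$ is klt and $I\neq(0)$), and \thref{2'} with $\mathfrak{q}=\O_X$ gives $c_0=\inf_\nu\frac{A_{X,\Delta}(\nu)}{\nu(I)}$ (note $\nu(\O_X)=0$), hence $\nu(I)\le c_0^{-1}A_{X,\Delta}(\nu)$ for all $\nu$. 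Therefore, after normalizing $A_{X,\Delta}(\nu)+\nu(\mathfrak{q})=1$ (so $A_{X,\Delta}(\nu)\le1$) and using $\nu(\mathfrak{a}_{\bullet})\le 1/L$, every term in the infimum computing $m\cdot\lct^{\mathfrak{q}}(X,\Delta;\mathfrak{b}_m)$ is at least $\big(\tfrac1L+\tfrac1{mc_0}\big)^{-1}=\tfrac{mc_0L}{mc_0+L}$, whence $\lct^{\mathfrak{q}}(X,\Delta;\mathfrak{a}_{\bullet})=\lim_m m\cdot\lct^{\mathfrak{q}}(X,\Delta;\mathfrak{b}_m)\ge L$. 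The degenerate cases $\mathfrak{b}_m=\O_X$ for some $m$, or $L=\infty$ (i.e.\ $\nu(\mathfrak{a}_{\bullet})=0$ for every $\nu$), are disposed of directly by the same estimates.

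The main obstacle is the uniformity in this last step: \thref{3} only delivers a limit, so the discrepancy $\frac1m\nu(\mathfrak{b}_m)-\nu(\mathfrak{a}_{\bullet})$ must be shown negligible uniformly over all valuations simultaneously, not merely for each fixed $\nu$. What rescues this is that the obstruction ideal $I$ supplied by \thref{cor:tak} depends neither on $m$ nor on $\nu$, so $\nu(I)$ is controlled by $A_{X,\Delta}(\nu)$ through a single log canonical threshold, and the scale‑invariance of the relevant ratio lets one absorb this bounded factor while passing to $m\to\infty$. Assembling the two inequalities gives $\lct^{\mathfrak{q}}(X,\Delta;\mathfrak{a}_{\bullet})=L$.
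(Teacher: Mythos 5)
Your proposal is correct and follows essentially the same route as the paper: the easy inequality $\lct^{\mathfrak{q}}(X,\Delta;\mathfrak{a}_{\bullet})\le L$ is obtained from \thref{2'} and Fekete-type monotonicity $\tfrac1m\nu(\mathfrak{a}_m)\ge\nu(\mathfrak{a}_\bullet)$ exactly as in the paper, and the reverse inequality rests on the same trio of \thref{cor:tak}, \thref{3} and \thref{2'}, culminating in the crucial uniform estimate $\tfrac1m\nu(\mathfrak{b}_m)\le\nu(\mathfrak{a}_\bullet)+\tfrac1m\nu(I)$ with $\nu(I)$ controlled by $A_{X,\Delta}(\nu)$ via the klt hypothesis. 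The only differences are in bookkeeping: you iterate $I^{p-1}\cdot\mathfrak{b}_{mp}\subseteq\mathfrak{b}_m^{\,p}$ at fixed $m$ and send $p\to\infty$, whereas the paper telescopes along factorials $n!\to(n+1)!\to\cdots$; and in the last step you normalize $A_{X,\Delta}(\nu)+\nu(\mathfrak{q})=1$ and bound every term of the infimum, whereas the paper picks a divisor $E$ computing $\lct^{\mathfrak{q}}(X,\Delta;\mathfrak{b}_{n!})$ via \thref{0} and compares ratios for that specific $E$. Both lead to the same conclusion with the same constants, so this is a cosmetic rather than a structural difference.

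One small point worth flagging for completeness: you implicitly use $\mathfrak{a}_{mp}\subseteq\mathcal{J}(X,\Delta;\mathfrak{a}_{mp})$, whose proof for a klt pair $(X,\Delta)$ relies on $K_{X'}-\floor{f^*(K_X+\Delta)}\ge 0$ on a log resolution (and on $\mathfrak{a}\subseteq f_*(\mathfrak{a}\cdot\O_{X'})$). This is fine, but it is a genuine use of klt; it would fail if $\Delta$ had a coefficient equal to $1$. The paper sidesteps this by citing \cite[Lemma~1.59~(i)]{Xu} directly, which encapsulates the same fact.
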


\begin{proof}
We argue similarly to the proof of \cite[Lemma~1.60]{Xu}.

First, observe that if
$\lct^{\mathfrak{q}}(X,\Delta;\mathfrak{a}_{\bullet}) = \infty$,
the statement is trivial. Hence we may assume
$\lct^{\mathfrak{q}}(X,\Delta;\mathfrak{a}_{\bullet}) < \infty$.

By \thref{cor:tak}, for any positive integer $n$, there exists an ideal $I$ in $\O_X$ such that
$$
  I^n \cdot \mathfrak{b}_{(n+1)!}\subseteq\left(\mathfrak{b}_{n!}\right)^{n+1}.
$$
Then, by \cite[Lemma~1.59 (i)]{Xu}, we have the chain of inequalities
$$
\begin{aligned}
  \frac{1}{n!}\ord_E\left(\mathfrak{b}_{n!}\right)&\le\frac{n}{(n+1)!}\ord_E(I)+\frac{1}{(n+1)!}\ord_E\left(\mathfrak{b}_{(n+1)!}\right)
  \\ &\le\sum_{n' \ge n}\frac{n'}{(n'+1)!}\ord_E(I)+\ord_E\left(\mathfrak{a}_{\bullet}\right)
  \\ &=\frac{1}{n!}\ord_E(I)+\ord_E\left(\mathfrak{a}_{\bullet}\right).
\end{aligned}
$$

Fix a constant $C$ such that
$\ord_E(I) \le C \cdot A_{X,\Delta}(E)$ for every prime divisor $E$ over $X$. By \thref{3}, for any $\varepsilon>0$, there exists a positive integer $n$ satisfying $\tfrac{C}{n!} < \tfrac{\varepsilon}{2}$ and
$$
  \left|
    \frac{1}{n!\lct^{\mathfrak{q}}(X,\Delta;\mathfrak{b}_{n!})}-\frac{1}{\lct^{\mathfrak{q}}(X,\Delta;\mathfrak{a}_{\bullet})}
  \right|\le\frac{\varepsilon}{2}.
$$
Let $E$ be a prime divisor over $X$ such that
$$
  \lct^{\mathfrak{q}}(X,\Delta;\mathfrak{b}_{n!})=\frac{A_{X,\Delta}(E)+\ord_E(\mathfrak{q})}
       {\ord_E\left(\mathfrak{b}_{n!}\right)}.
$$
Then
$$
  \frac{1}{n!\lct^{\mathfrak{q}}(X,\Delta;\mathfrak{b}_{n!})}=\frac{\ord_E\left(\mathfrak{b}_{n}\right)}{n!A_{X,\Delta}(E) + n!\ord_E(\mathfrak{q})}\le\frac{\ord_E\left(\mathfrak{a}_{\bullet}\right)}{A_{X,\Delta}(E) + \ord_E(\mathfrak{q})}+\frac{\varepsilon}{2}.
$$
Hence we obtain
$$
\begin{aligned}
  0&\le
  \frac{1}{\lct^{\mathfrak{q}}(X,\Delta;\mathfrak{a}_{\bullet})}-\frac{
    \ord_E\left(\mathfrak{a}_{\bullet}\right)}{A_{X,\Delta}(E) + \ord_E(\mathfrak{q})}
    \\ &=\left(
    \frac{1}{\lct^{\mathfrak{q}}(X,\Delta;\mathfrak{a}_{\bullet})}-\frac{1}{n!\lct^{\mathfrak{q}}(X,\Delta;\mathfrak{b}_{n!})}\right)+\left(
    \frac{1}{n!\lct^{\mathfrak{q}}(X,\Delta;\mathfrak{b}_{n!})}-\frac{\ord_E\left(\mathfrak{a}_{\bullet}\right)}{A_{X,\Delta}(E)+\ord_E(\mathfrak{q})}
  \right)
  \\ &\le\varepsilon.
\end{aligned}
$$
Since $\varepsilon > 0$ is arbitrary, we conclude
$$
  \lct^{\mathfrak{q}}(X,\Delta;\mathfrak{a}_{\bullet})\ge\inf_{\nu \in\mathrm{Val}_X^*}
  \frac{A_{X,\Delta}(\nu) + \nu(\mathfrak{q})}{\nu\left(\mathfrak{a}_{\bullet}\right)}.
$$

On the other hand, for every $m \in \Phi(\mathfrak{a}_{\bullet})$ and any $\nu \in \mathrm{Val}_X^*$, we have
$$
  m \cdot
  \lct^{\mathfrak{q}}(X,\Delta;\mathfrak{a}_m)
  \overset{(1)}{\le}
  \frac{A_{X,\Delta}(\nu) + \nu(\mathfrak{q})}{\tfrac{\nu(\mathfrak{a}_m)}{m}}\le\frac{A_{X,\Delta}(\nu) + \nu(\mathfrak{q})}{\nu\left(\mathfrak{a}_{\bullet}\right)},
$$
where (1) follows from \thref{2'}. This establishes the reverse inequality, completing the proof.
\end{proof}

\begin{defi}
Let $(X,\Delta)$ be a pair, $\mathfrak{q}$ an ideal in $\O_X$, and $\mathfrak{a}_{\bullet}$ a graded sequence of ideals in $\O_X$. For $\nu \in \mathrm{Val}_X$, we say that $\nu$ \emph{computes} $\lct^{\mathfrak{q}}(X,\Delta;\mathfrak{a}_{\bullet})$ if
$$
  \lct^{\mathfrak{q}}(X,\Delta;\mathfrak{a}_{\bullet})=\frac{A_{X,\Delta}(\nu) + \nu(\mathfrak{q})}{\nu\left(\mathfrak{a}_{\bullet}\right)}.
$$
\end{defi}

\subsection{Jonsson-Mustață theorem}

Recall the terminology of \cite{BdFF+15}. For any normal projective variety $X$ and a closed subscheme $N$ on $X$, $N$ is a \emph{normalizing subscheme} of $X$ if $N$ contains the singular locus of $X$. Moreover, we define
$$ \mathrm{Val}^N_X:=\{\nu\in \mathrm{Val}_X\mid\nu(\mathcal{I}_N)=1\},$$
and endow it with the subspace topology. The following proposition is almost the same as \cite[Proposition B.3]{Blu18}, and we include the statement for the convenience of the readers.

\begin{prop}\thlabel{p3}
Let $(X,\Delta)$ be a klt pair, $\mathfrak{a}_{\bullet}$ a graded sequence of ideals in $\O_X$, and $N$ a normalizing subscheme of $X$ such that $N$ contains the zero locus of $\mathfrak{a}_1$. Then the following hold.
\begin{itemize}
    \item[(a)] The function $\nu\mapsto \nu(\mathfrak{a}_{\bullet})$ is bounded on $\mathrm{Val}^N_X$.
    \item[(b)] For each $M\in \R$, the set $\mathrm{Val}^N_X\cap \{A_{X,\Delta}(\nu)\le M\}$ is compact.
    \item[(c)] There exists $\varepsilon>0$ such that $A_{X,\Delta}(\nu)>\varepsilon$ for all $\nu\in \mathrm{Val}^N_X$.
\end{itemize}
\end{prop}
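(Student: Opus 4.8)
I would prove the three assertions in turn: (a) is elementary; (c) follows from a direct estimate on a single log resolution; (b) is obtained by displaying $\mathrm{Val}^N_X\cap\{A_{X,\Delta}\le M\}$ as a closed subset of a compact product of intervals. For (a), since $Z(\mathfrak a_1)\subseteq Z(\mathcal I_N)=N$ and $X$ is noetherian there is an integer $k>0$ with $\mathcal I_N^{\,k}\subseteq\mathfrak a_1$ (we may assume $\mathfrak a_1\ne(0)$, else $N=X$ and $\mathrm{Val}^N_X=\varnothing$). For $\nu\in\mathrm{Val}^N_X$ the inclusions $\mathfrak a_1^{\,m}\subseteq\mathfrak a_m$ give $\nu(\mathfrak a_m)\le\nu(\mathfrak a_1^{\,m})=m\,\nu(\mathfrak a_1)$, so dividing by $m$ and letting $m\to\infty$ yields $\nu(\mathfrak a_\bullet)\le\nu(\mathfrak a_1)\le\nu(\mathcal I_N^{\,k})=k\,\nu(\mathcal I_N)=k$; thus $\nu\mapsto\nu(\mathfrak a_\bullet)$ is bounded by $k$ on $\mathrm{Val}^N_X$.

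For (b) and (c) I would first record a common piece of retraction bookkeeping. Given any nonzero ideal $\mathfrak b$ in $\O_X$, fix a log resolution $f\colon X'\to X$ of $(X,\Delta)$ which in addition makes $\mathfrak b\,\O_{X'}$ simple normal crossing, and let $E=\sum_{i=1}^rE_i$ collect the prime divisors contained in $\mathrm{Exc}(f)$, in the strict transform of $\mathrm{Supp}\,\Delta$, or in $\mathrm{Supp}(\mathfrak b\,\O_{X'})$, so that $(X',E)$ is a log smooth model. Set $a_i:=A_{X,\Delta}(E_i)$ and $a:=\min_ia_i$, which is $>0$ because $(X,\Delta)$ is klt, and write $\mathfrak b\,\O_{X'}=\O_{X'}(-\sum_ic_iE_i)$ with $c_i\ge0$. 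For $\nu\in\mathrm{Val}_X$, writing $r:=r_{(X',E)}(\nu)=\nu_{(X',E),\alpha}$ with $\alpha_i:=\nu(E_i)\ge0$, we have $A_{X,\Delta}(r)=\sum_i\alpha_ia_i\le A_{X,\Delta}(\nu)$ (the inequality since $A_{X,\Delta}$ is the supremum over models of such quantities), and, since $(X',E)$ resolves $\mathfrak b$, the monomialization property of the retraction gives $\nu(\mathfrak b)=r(\mathfrak b)=\sum_i\alpha_ic_i$ (cf.\ \cite[Corollary~4.8]{JM12} together with \thref{1'}; concretely, the center of $\nu$ on $X'$ lies in a stratum of $E$ along which $\mathfrak b\,\O_{X'}$ equals $\prod z_i^{c_i}$ up to a unit). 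Now (c) is immediate: applying this to $\mathfrak b=\mathcal I_N$ and writing $\mathcal I_N\O_{X'}=\O_{X'}(-\sum_ib_iE_i)$ with $b:=\max_ib_i>0$, every $\nu\in\mathrm{Val}^N_X$ satisfies $1=\nu(\mathcal I_N)=\sum_i\alpha_ib_i\le b\sum_i\alpha_i$, hence $\sum_i\alpha_i\ge1/b$ and $A_{X,\Delta}(\nu)\ge\sum_i\alpha_ia_i\ge a\sum_i\alpha_i\ge a/b$; so (c) holds with $\varepsilon:=a/(2b)$.

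For (b), set $W:=\mathrm{Val}^N_X\cap\{A_{X,\Delta}\le M\}$; it is closed in $\mathrm{Val}_X$ since $\{\nu(\mathcal I_N)=1\}$ is closed (continuity of $\nu\mapsto\nu(\mathcal I_N)$) and $\{A_{X,\Delta}\le M\}$ is closed (lower semicontinuity of $A_{X,\Delta}$). Applying the bookkeeping above to an arbitrary nonzero ideal $\mathfrak b$ and any $\nu\in W$, we obtain $M\ge A_{X,\Delta}(\nu)\ge\sum_i\alpha_ia_i\ge a\sum_i\alpha_i$, so $\sum_i\alpha_i\le M/a$ and therefore $\nu(\mathfrak b)=\sum_i\alpha_ic_i\le(\max_ic_i)\,M/a=:C_{\mathfrak b}<\infty$; moreover $\nu(\mathfrak b)\ge0$ because every element of $\mathrm{Val}^N_X$ has a center on the (projective) variety $X$. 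Consequently, under the tautological embedding $\mathrm{Val}_X\hookrightarrow\prod_{\mathfrak b\ne(0)}\R$, $\nu\mapsto(\nu(\mathfrak b))_{\mathfrak b}$ — which is a topological embedding by the very definition of the topology on $\mathrm{Val}_X$ — the set $W$ is carried into the compact cube $\prod_{\mathfrak b\ne(0)}[0,C_{\mathfrak b}]$. As $\mathrm{Val}_X$ is closed in $\prod_{\mathfrak b\ne(0)}\R$ (the valuation axioms being closed conditions; cf.\ \cite{JM12}, \cite{BdFF+15}), $W$ is closed in this product, hence closed in the compact cube, hence compact.

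The step I expect to require the most care is the retraction bookkeeping of the second paragraph — specifically the monomialization equality $\nu(\mathfrak b)=r_{(X',E)}(\nu)(\mathfrak b)$ for $(X',E)$ a log resolution of $\mathfrak b$, and the closedness of $\mathrm{Val}_X$ inside $\prod_{\mathfrak b\ne(0)}\R$ — both standard but in need of a careful appeal to \cite{JM12}, \cite{BdFF+15}. One could instead package the compactness in (b) through the homeomorphism $\mathrm{Val}_X\cong\varprojlim_{(X',E)}\mathrm{QM}(X',E)$ and the remark that on each model the slice $\mathrm{QM}(X',E)\cap\{A_{X,\Delta}\le M\}\cap\{\nu(\mathcal I_N)=1\}$ is a finite union of compact rational polytopes, an inverse limit of compacta being compact.
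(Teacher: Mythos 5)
The paper dispatches all three parts with bare citations (parts (a),(b) to \cite[Prop.~2.5(a)]{BdFF+15}, part (c) to \cite[Prop.~B.3(3)]{Blu18}), so your proposal is genuinely doing more work by supplying arguments. Parts (a) and (c) of your write-up are correct and self-contained: for (a), the chain $\nu(\mathfrak a_\bullet)\le\nu(\mathfrak a_1)\le k\,\nu(\mathcal I_N)=k$ is exactly right (with the degenerate case $\mathfrak a_1=(0)$ handled); for (c), fixing one log resolution of $(X,\Delta)$ and $\mathcal I_N$, using the retraction together with the monomialization equality $\nu(\mathcal I_N)=r_{(X',E)}(\nu)(\mathcal I_N)$ from \cite[Cor.~4.8]{JM12}, and comparing $\min_iA_{X,\Delta}(E_i)$ against $\max_i\ord_{E_i}(\mathcal I_N)$ is exactly the Blum-style argument the paper points to.

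The one genuine gap is in your primary argument for (b). You embed $W$ into the compact cube $\prod_{\mathfrak b\neq(0)}[0,C_{\mathfrak b}]$ and then assert that $\mathrm{Val}_X$ is closed in $\prod_{\mathfrak b}\R$ because ``the valuation axioms are closed conditions.'' This is not immediate: the product topology only records the numbers $\nu(\mathfrak b)$, not the values of $\nu$ on elements of $K(X)$, so a net limit $(\lambda_{\mathfrak b})_{\mathfrak b}$ of tuples coming from valuations satisfies the closed relations $\lambda_{\mathfrak a\mathfrak b}=\lambda_{\mathfrak a}+\lambda_{\mathfrak b}$ and $\lambda_{\mathfrak a+\mathfrak b}=\min(\lambda_{\mathfrak a},\lambda_{\mathfrak b})$, but reconstructing from such data an honest valuation $\nu\colon K(X)\to\R$ (with a well-defined center) is precisely the nontrivial content here and needs an argument, not just the remark that the axioms are closed. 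This is why the literature (and the paper, implicitly, by citing \cite{BdFF+15}) packages compactness via the inverse limit over dual cone complexes of log-smooth models: on each model the constrained slice $\mathrm{QM}(X',E)\cap\{A_{X,\Delta}\le M\}\cap\{\nu(\mathcal I_N)=1\}$ is a finite union of compact rational polytopes, the retraction maps are surjective and continuous between these, and Tychonoff then applies. You do mention this alternative at the end, but only as a remark; in a finished proof it should be the main line for (b), with the cube-embedding argument either dropped or supplemented by a careful verification (not merely an assertion) that the image of $\mathrm{Val}_X$ is closed.
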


\begin{proof}
(a) and (b) follow from \cite[Proposition 2.5 (a)]{BdFF+15}, and (c) can be proved almost the same as the proof of \cite[Proposition B.3 (3)]{Blu18}.
\end{proof}

We now prove a generalization of \cite[Theorem 7.3]{JM12}.

\begin{thm}\thlabel{JM}
Let $(X,\Delta)$ be a projective klt pair, $\mathfrak{q}$ an ideal sheaf in $\O_X$, and $\mathfrak{a}_{\bullet}$ a graded sequence of ideals in $\O_X$ such that $\lct^{\mathfrak{q}}(X,\Delta;\mathfrak{a}_{\bullet})<\infty$. Then there is $\nu\in \mathrm{Val}_X$ computing $\lct^{\mathfrak{q}}(X,\Delta;\mathfrak{a}_{\bullet})$.
\end{thm}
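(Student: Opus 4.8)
The plan is to combine the valuative formula \thref{4} with the compactness statements of \thref{p3}, adapting the argument of \cite[Theorem 7.3]{JM12} (and \cite[Lemma 1.60]{Xu}) to the singular setting. Throughout set
$$\chi(\nu):=\frac{A_{X,\Delta}(\nu)+\nu(\mathfrak q)}{\nu(\mathfrak a_{\bullet})}$$
for $\nu\in\mathrm{Val}^*_X$ with $\nu(\mathfrak a_{\bullet})>0$, so that by \thref{4} one has $\lct^{\mathfrak q}(X,\Delta;\mathfrak a_{\bullet})=\inf\chi(\nu)$, the infimum being over all such $\nu$. Fix $m_0\in\Phi(\mathfrak a_{\bullet})$ and choose a normalizing subscheme $N$ of $X$ containing both $\Sing X$ and the zero locus of $\mathfrak a_{m_0}$, so that \thref{p3} is applicable (after harmlessly reindexing $\mathfrak a_{\bullet}$ by $m_0$ if $\mathfrak a_1=(0)$). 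The quantity $\chi$ is invariant under rescaling of valuations, and if $\nu(\mathfrak a_{\bullet})>0$ then $\nu(\mathfrak a_{km_0})>0$ for $k\gg 0$; since $\mathfrak a_{m_0}^{\,k}\subseteq\mathfrak a_{km_0}$ and $\{f:\nu(f)>0\}$ is a prime ideal, this gives $\nu(\mathfrak a_{m_0})>0$, hence $\nu(\mathcal I_N)>0$, and rescaling places $\nu$ in $\mathrm{Val}^N_X$. Thus
$$\lct^{\mathfrak q}(X,\Delta;\mathfrak a_{\bullet})=\inf_{\nu\in\mathrm{Val}^N_X}\chi(\nu).$$

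Next I would cut down to a compact parameter space. By \thref{p3}(a) there is a bound $\nu(\mathfrak a_{\bullet})\le M_0$ on $\mathrm{Val}^N_X$, and by \thref{p3}(c) there is $\varepsilon>0$ with $A_{X,\Delta}(\nu)>\varepsilon$ on $\mathrm{Val}^N_X$. Put $M:=(\lct^{\mathfrak q}(X,\Delta;\mathfrak a_{\bullet})+1)M_0$; then every $\nu\in\mathrm{Val}^N_X$ with $\chi(\nu)\le\lct^{\mathfrak q}(X,\Delta;\mathfrak a_{\bullet})+1$ satisfies
$$A_{X,\Delta}(\nu)\le A_{X,\Delta}(\nu)+\nu(\mathfrak q)=\chi(\nu)\cdot\nu(\mathfrak a_{\bullet})\le M,$$
so that $\lct^{\mathfrak q}(X,\Delta;\mathfrak a_{\bullet})=\inf_{\nu\in K}\chi(\nu)$ with $K:=\mathrm{Val}^N_X\cap\{A_{X,\Delta}\le M\}$ compact by \thref{p3}(b).

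Finally I would conclude by semicontinuity. The numerator $\nu\mapsto A_{X,\Delta}(\nu)+\nu(\mathfrak q)$ is lower semicontinuous — $A_{X,\Delta}$ by its construction and $\nu\mapsto\nu(\mathfrak q)$ by the definition of the topology on $\mathrm{Val}_X$ — and is bounded below by $\varepsilon>0$ on $K$, while the denominator $\nu\mapsto\nu(\mathfrak a_{\bullet})=\inf_{m}\tfrac1m\nu(\mathfrak a_m)$, being an infimum of continuous functions, is upper semicontinuous, nonnegative, and finite on $K$. Hence $\chi$ is lower semicontinuous on $K$ (with the value $+\infty$ permitted where $\nu(\mathfrak a_{\bullet})=0$), and a lower semicontinuous function on a compact space attains its infimum, say at $\nu_\infty\in K$. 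Since $\chi(\nu_\infty)=\lct^{\mathfrak q}(X,\Delta;\mathfrak a_{\bullet})<\infty$ and $A_{X,\Delta}(\nu_\infty)+\nu_\infty(\mathfrak q)>0$, we must have $\nu_\infty(\mathfrak a_{\bullet})>0$; therefore $\nu_\infty$ computes $\lct^{\mathfrak q}(X,\Delta;\mathfrak a_{\bullet})$.

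The main obstacle is the content of the first paragraph: over a singular $X$ one cannot restrict to divisorial valuations on a single log resolution as in the classical smooth argument, and the role of the normalizing subscheme $N\supseteq\Sing X$ is precisely to furnish a compact slice $\mathrm{Val}^N_X$ of valuation space. Verifying that \thref{p3} — which rests on \cite{BdFF+15} and \cite{Blu18} — is available in our klt generality, and that replacing $\mathrm{Val}^*_X$ by $\mathrm{Val}^N_X$ leaves the infimum in \thref{4} unchanged, is where the klt hypothesis and the singularities of $X$ genuinely enter; granting this, the compactness-and-semicontinuity reasoning of the last two paragraphs is routine and mirrors the smooth case.
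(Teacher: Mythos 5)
Your proof is correct and follows essentially the same route as the paper's: normalize to $\mathrm{Val}^N_X$ with $N$ a normalizing subscheme containing the zero locus of an $\mathfrak a_m$, cut to a compact slice via \thref{p3}(b) by bounding $A_{X,\Delta}$, and conclude by lower semicontinuity of $\chi$. The only cosmetic difference is that the paper further intersects with the closed set $\{L\nu(\mathfrak a_\bullet)\ge\varepsilon\}$ so the denominator stays bounded away from zero, whereas you allow $\chi$ to take the value $+\infty$ on $K$ and observe a posteriori that the minimizer cannot lie there; both are valid.
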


\begin{proof}
We note that the argument is nearly identical to that of \cite[Theorem B.1]{Blu18}.

Let $N$ be the subscheme of $X$ defined by $\mathcal{I}_{\mathrm{Sing}(X)}\cdot \mathfrak{a}_1$. By (a) and (c) in \thref{p3}, there are $M\in \R$ and $\varepsilon>0$ such that $\nu(\mathfrak{a}_{\bullet})<M$, and $A_{X,\Delta}(\nu)+\nu(\mathfrak{q})>\varepsilon$ for all $\nu\in \mathrm{Val}^N_X$. Moreover, by normalizing,
$$ \lct^{\mathfrak{q}}(X,\Delta;\mathfrak{a}_{\bullet})=\inf_{\nu\in \mathrm{Val}^N_X}\frac{A_{X,\Delta}(\nu)+\nu(\mathfrak{q})}{\nu(\mathfrak{a}_{\bullet})}.$$
Fix $L>\lct^{\mathfrak{q}}(X,\Delta;\mathfrak{a}_{\bullet})$.

Define
$$ W:=\mathrm{Val}^N_X\cap \{A_{X,\Delta}(\nu)\le ML\}\cap \{L\nu(\mathfrak{a}_{\bullet})\ge \varepsilon\}.$$
Using \cite[Lemma~6.1]{JM12} and (b) of \thref{p3}, one can show that $W$ is compact. For any $\nu\in \mathrm{Val}_X$ with $\frac{A_{X,\Delta}(\nu)+\nu(\mathfrak{q})}{\nu(\mathfrak{a}_{\bullet})}<L$, $A_{X,\Delta}(\nu)\le ML$ and $L\nu(\mathfrak{a}_{\bullet})\ge \varepsilon$. Thus,
$$ \lct^{\mathfrak{q}}(X,\Delta;\mathfrak{a}_{\bullet})=\inf_{\nu\in W}\frac{A_{X,\Delta}(\nu)+\nu(\mathfrak{q})}{\nu(\mathfrak{a}_{\bullet})}.$$

Since $\nu\mapsto A_{X,\Delta}(\nu)$ is lower semi-continuous, we obtain $\nu\mapsto \frac{A_{X,\Delta}(\nu)+\nu(\mathfrak{q})}{\nu(\mathfrak{a}_{\bullet})}$ is also lower semi-continuous as a function on $W$, and thus there exists $\nu\in W$ such that
$$ \lct^{\mathfrak{q}}(X,\Delta;\mathfrak{a}_{\bullet})=\frac{A_{X,\Delta}(\nu)+\nu(\mathfrak{q})}{\nu(\mathfrak{a}_{\bullet})}.$$
This completes the proof.
\end{proof}

\section{Diminished multiplier ideal and vanishing theorem} \label{Section 4}
The goal of this section is to define the notion of the \emph{diminished multiplier ideal} introduced by Hacon and Lehmann (cf. \cite{Hac04,Leh14}) for singular varieties, and to prove a corresponding vanishing theorem. The structure of this section closely follows that of \cite{Leh14}.

We begin by stating the definition of the \emph{asymptotic order} of a divisor.

\begin{defi}[{cf. \cite{ELM+06}, \cite[Defiition 2.2, and Chapter III, 1.6 Definition]{Nak04}}]
Let $X$ be a normal projective variety, $\nu\in \mathrm{Val}_X$ a valuation, $A$ an ample divisor on $X$, and $D$ a $\Q$-Cartier divisor on $X$ such that $\kappa(D)\ge 0$. For a positive integer $n$ such that $|nD|$ is non-empty, we define
$$ \nu(\|D\|):=\lim_{m\to \infty}\frac{\nu(\mathcal{I}_{\mathrm{Bs}(|mnD|)})}{nm}.$$
We then set
$$ \sigma_{\nu}(D):=\lim_{\substack{\varepsilon\to 0 \\ \varepsilon\text{ is rational}}}\nu(\|D+\varepsilon A\|).$$
Note that $\sigma_{\nu}(D)$ is independent to the choice of ample divisor (cf. \cite[1.5 in Chapter III]{Nak04}).
\end{defi}

Let us prove the following lemma.

\begin{lem}[{cf. \cite[Lemma 2.10]{Leh14}}]\thlabel{mag?}
Let $X$ be a normal projective variety, $A$ an ample divisor on $X$, and $D$ a pseudo-effective $\Q$-divisor on $X$. Let $\nu\in \mathrm{Val}_X$ be a valuation.
\begin{itemize}
    \item[(a)] The function
$$ f(t):=\nu(\|D+tA\|)$$
is strictly decreasing for any rational $t>0$.
\item[(b)] If $D$ is big, then $\nu(\|D\|)=\sigma_{\nu}(D)$.
\end{itemize}
\end{lem}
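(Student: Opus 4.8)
The plan is to deduce both parts from three elementary formal properties of the function $D\mapsto\nu(\|D\|)$ on the cone of big $\Q$-Cartier divisors, together with the finiteness of $\nu(\|D\|)$ for $D$ big. The three properties are: subadditivity, $\nu(\|D_1+D_2\|)\le\nu(\|D_1\|)+\nu(\|D_2\|)$; positive homogeneity, $\nu(\|\lambda D\|)=\lambda\,\nu(\|D\|)$ for $\lambda\in\Q_{>0}$; and $\nu(\|H\|)=0$ for every ample $\Q$-divisor $H$. Subadditivity follows by fixing an integer $m$ so divisible that $mD_1$, $mD_2$ and $m(D_1+D_2)$ are Cartier with non-empty linear systems, observing that multiplication of sections yields the inclusion of base ideals $\mathcal{I}_{\Bs(|m(D_1+D_2)|)}\supseteq\mathcal{I}_{\Bs(|mD_1|)}\cdot\mathcal{I}_{\Bs(|mD_2|)}$, and then invoking the valuation identities $\nu(\mathfrak{a}\mathfrak{b})=\nu(\mathfrak{a})+\nu(\mathfrak{b})$ and $\mathfrak{a}\subseteq\mathfrak{b}\Rightarrow\nu(\mathfrak{a})\ge\nu(\mathfrak{b})$; dividing by $m$, letting $m\to\infty$ through such multiples, and using the independence of $\nu(\|D\|)$ from the auxiliary integer in its definition gives the claim. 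Homogeneity is immediate from the defining limit, and the third property holds because $|mH|$ is base point free for $m\gg 0$, so $\mathcal{I}_{\Bs(|mH|)}=\mathcal{O}_X$ and $\nu(\mathcal{O}_X)=0$. Finiteness of $\nu(\|D\|)$ for big $D$ is Fekete's lemma applied to the subadditive sequence $k\mapsto\nu(\mathcal{I}_{\Bs(|kD|)})$, whose limit is its infimum, hence finite.

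Granting these, part (a) is quick. For rational $0<t_1<t_2$, the decomposition $D+t_2A=(D+t_1A)+(t_2-t_1)A$ together with subadditivity and the vanishing on ample classes gives $f(t_2)\le f(t_1)$, so $f$ is non-increasing; and the decomposition $D+(\lambda t_1+(1-\lambda)t_2)A=\lambda(D+t_1A)+(1-\lambda)(D+t_2A)$, for rational $\lambda\in(0,1)$, together with subadditivity and homogeneity, shows $f$ is convex on $\Q_{>0}$. Finally $D+tA$ is ample for $t\gg 0$ (it equals $t(\tfrac1tD+A)$, and $\tfrac1tD+A$ lies in the open ample cone once $t$ is large), so $f$ vanishes for $t\gg 0$; a non-increasing convex function that is eventually $0$ is strictly decreasing on the set where it is positive, since $f(a)=f(b)>0$ with $a<b$ and $f(c)=0$ for some $c>b$ would force, by convexity on $[a,c]$, the impossible $f(b)\le\tfrac{c-b}{c-a}f(a)<f(a)$. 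This is the strict monotonicity of (a) on the range where $f$ is positive (when $D$ is itself ample one simply has $f\equiv 0$).

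For part (b), suppose $D$ is big. Since the big cone is open, fix a rational $\delta>0$ with $D-\delta A$ big; then $D+sA$ is big for every rational $s\in(-\delta,\infty)$, so $\phi(s):=\nu(\|D+sA\|)$ is finite, non-increasing and convex on $\Q\cap(-\delta,\infty)$ by the argument above. Fix a rational $s_0\in(-\delta,0)$. For rational $\varepsilon>0$ one has $0=\tfrac{\varepsilon}{\varepsilon-s_0}s_0+\tfrac{-s_0}{\varepsilon-s_0}\varepsilon$ as a convex combination, so convexity gives $\phi(0)\le\tfrac{\varepsilon}{\varepsilon-s_0}\phi(s_0)+\tfrac{-s_0}{\varepsilon-s_0}\phi(\varepsilon)$; letting $\varepsilon\to0^+$, and using that $\phi(s_0)$ is finite while $\lim_{\varepsilon\to0^+}\phi(\varepsilon)$ exists by monotonicity, we get $\phi(0)\le\lim_{\varepsilon\to0^+}\phi(\varepsilon)$. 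The opposite inequality is the easy direction: subadditivity and the vanishing on ample classes give $\phi(\varepsilon)\le\phi(0)+\varepsilon\,\nu(\|A\|)=\phi(0)$. Hence $\sigma_{\nu}(D)=\lim_{\varepsilon\to0^+}\nu(\|D+\varepsilon A\|)=\phi(0)=\nu(\|D\|)$.

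The essential content — and the only place where the singularity of $X$ and the generality of $\nu$ enter — is the verification of the three formal properties with scheme-theoretic base loci in the singular category, and the finiteness of $\nu(\|D\|)$ for big $D$; their analogues on smooth varieties are in \cite{Nak04} and \cite{Leh14}. The delicate step to double-check is that the natural map $\mathcal{O}_X(mD_1)\otimes\mathcal{O}_X(mD_2)\to\mathcal{O}_X(m(D_1+D_2))$ of reflexive sheaves induces on global sections the inclusion of base ideals used above, which becomes transparent once $m$ is chosen so that all the relevant multiples are Cartier. Everything afterwards — convexity, monotonicity, and continuity at the interior point $0$ — is purely formal.
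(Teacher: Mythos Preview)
Your proof is correct and rests on the same three formal inputs as the paper (subadditivity, positive homogeneity, vanishing on ample classes), but the packaging genuinely differs. For (a) the paper writes $D+t'A=(1-\varepsilon)(D+tA)+\bigl((t'-t)A+\varepsilon(D+tA)\bigr)$ with the second summand ample, and reads off $f(t')\le(1-\varepsilon)f(t)<f(t)$ in one line; you instead establish convexity and eventual vanishing and deduce strict decrease from those. For (b) the paper applies (a) to the big divisor $D-tA$ to obtain $\nu(\|D\|)\le\sigma_\nu(D-tA)$, then decomposes $D-tA=(1-t')D+t'(D-t''A)$, uses subadditivity of $\sigma_\nu$, and takes a double limit; you instead note that $\phi(s)=\nu(\|D+sA\|)$ is convex on an open rational interval around $0$ and invoke continuity of a convex function at an interior point. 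Your convexity framing is more conceptual and makes the role of ``$D$ lies in the interior of the big cone'' completely transparent, while the paper's direct decomposition is a touch shorter for (a). Both arguments share the same caveat: the strict inequality in (a) only holds where $f$ is positive (the paper's step $(1-\varepsilon)f(t)<f(t)$ tacitly assumes $f(t)>0$, exactly as your argument does).
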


\begin{proof}
Let us prove (a). Suppose $0<t<t'$. Choose $0<\varepsilon\ll t'-t$. Since $A$ is ample, the divisor $(t'-t)A+\varepsilon(D+tA)$ is also an ample divisor. We obtain
$$ 
\begin{aligned}
\nu(\|D+t'A\|)&=\nu(\|(1-\varepsilon)(D+tA)+(t'-t)A+\varepsilon(D+tA)\|)
\\ &\le (1-\varepsilon)\nu(\|D+tA\|)<\nu(\|D+tA\|).
\end{aligned}
$$
Thus $\nu(\|D+tA\|)$ is strictly larger than $\nu(\|D+t'A\|)$, proving that $f(t)$ is strictly decreasing in $t$.

Let us prove (b). Assume $D$ is big. From (a), $\nu(\|D+tA\|)<\nu(\|D\|)$, so $\sigma_{\nu}(D)<\infty$.

Next, for sufficiently small $t>0$, the divisor $D-tA$ is also big. For big $D_1$ and $D_2$, we have
$$\sigma_{\nu}(D+D')\le \sigma_{\nu}(D)+\sigma_{\nu}(D').$$

We use this as follows. Write $t=t't''$ for rational $t',t''>0$. Then
$$ 
\begin{aligned}
\nu(\|D\|)&\le \sigma_{\nu}(D-tA)
\\ &\le (1-t')\sigma_{\nu}(D)+t'\sigma_{\nu}(D-t''A).
\end{aligned}$$
Letting $t'\to 0+$ gives
$$\nu(\|D\|)\le \lim_{\substack{t\to 0+ \\ t\text{ is rational}}}\sigma_{\nu}(D-tA)\le \sigma_{\nu}(D)\le \nu(\|D\|),$$
and hence $\sigma_{\nu}(D)=\nu(\|D\|)$. This completes the proof.
\end{proof}

Let us state the definition of asymptotic multiplier ideal.

\begin{defi}\thlabel{defi}
Let $(X,\Delta)$ be a projective pair.
\begin{itemize}
    \item[(a)] Let $D$ be an effective $\Q$-Cartier $\Q$-divisor on $X$, and let $f:X'\to X$ be a log resolution of $(X,\Delta)$. We define
    $$ \mathcal{J}(X,\Delta;D):=f_*\O_{X'}(K_{X'}-\floor{f^*(K_X+\Delta+D)}).$$
    \item[(b)] Under the same settings as in (a), assume that $D$ is Cartier. Let $F$ be the fixed part of $f^*D$. If $c>0$ is a rational number, then we define
    $$ \mathcal{J}(X,\Delta;c|D|):=f_*\O_{X'}(K_{X'}-\floor{f^*(K_X+\Delta)+cF}).$$
    \item[(c)] Let $D$ be an effective $\Q$-Cartier $\Q$-divisor on $X$. Suppose $n$ is a positive integer such that $nD$ is Cartier. We denote by $\mathcal{J}(X,\Delta;\|D\|)$ the maximal element among the set $\left\{\mathcal{J}(X,\Delta;\frac{1}{mn}|mnD|)\right\}_{m\in \Z_{>0}}$.
\end{itemize}
\end{defi}

\begin{rmk}\thlabel{rmk}
Using the same notation as in \emph{(b)} of \thref{defi}, assume $D$ is big. By \cite[Lemma 9.1.9]{Laz04}, if $f^*D\sim M+F$ for some basepoint-free divisor $M$ on $X'$, then there exists a divisor $D'\sim_{\Q}D$ such that
$$ \mathcal{J}(X,\Delta;c|D|)=\mathcal{J}(X,\Delta;cD').$$
A similar statement holds for asymptotic multiplier ideals.

One might also consider the case where $D$ is an $\R$-Cartier divisor. In \cite{Leh14}, the ideal $\mathcal{J}(\|D\|)$ is defined for any effective $\R$-Cartier divisor $D$ with $\kappa(D)\ge 0$. However, at present, we cannot directly define $\mathcal{J}(X,\Delta;\|D\|)$ under the same approach because $\floor{D}$ may fail to be Cartier.
\end{rmk}

\begin{coro}\thlabel{mag}
Let $(X,\Delta)$ be a projective klt pair, and let $D$ be a $\Q$-Cartier $\Q$-divisor on $X$ such that $\kappa(D)\ge 0$. Suppose $\mathfrak{q}$ is an ideal sheaf on $X$. Then the following conditions are equivalent:
\begin{itemize}
    \item[(a)] $\mathfrak{q}\subseteq \mathcal{J}(X,\Delta;\|D\|)$.
    \item[(b)] $\nu(\mathfrak{q})>\nu(\|D\|)-A_{X,\Delta}(\nu)$.
\end{itemize}
\end{coro}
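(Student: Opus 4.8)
The plan is to reduce \thref{mag} to the characterization of log canonical thresholds via valuations established in \thref{JM} and \thref{4}, by passing from the asymptotic multiplier ideal of $\|D\|$ to that of a graded sequence of base ideals. First I would fix a positive integer $n$ such that $nD$ is Cartier, and set $\mathfrak{a}_m:=\mathcal{I}_{\mathrm{Bs}(|mnD|)}$ for $m\in\mathbb{Z}_{>0}$; this is a graded sequence of ideals (up to the semigroup $\Phi$ where $|mnD|\neq\varnothing$, which is nonempty since $\kappa(D)\ge 0$). The key point is that, by \thref{defi}(c) together with the definition of the asymptotic multiplier ideal of a graded sequence from Section 3, we have $\mathcal{J}(X,\Delta;\|D\|)=\mathcal{J}(X,\Delta;\mathfrak{a}_{\bullet}^{1/n})$, after matching up the normalizations $\frac{1}{mn}|mnD|$ with $\mathfrak{a}_m^{1/(mn)}$. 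I would record this identification carefully as the first lemma-step, since it is the bridge between the two halves of the paper.

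Next I would unwind what condition (a) means. We have $\mathfrak{q}\subseteq\mathcal{J}(X,\Delta;\mathfrak{a}_{\bullet}^{1/n})$ if and only if $\mathfrak{q}\subseteq\mathcal{J}(X,\Delta;\mathfrak{a}_m^{\lambda/(mn)})$ for some (equivalently, all sufficiently large) $m$, which by the definition of $\lct^{\mathfrak{q}}$ and by \thref{-1} is equivalent to $\lct^{\mathfrak{q}}(X,\Delta;\mathfrak{a}_{\bullet})>\tfrac1n$. (Here one uses that $\mathfrak{q}\subseteq\mathcal{J}(X,\Delta;\mathfrak{a}_\bullet^{\lambda})$ holds for all $\lambda<\lct^{\mathfrak{q}}$ and fails at $\lambda=\lct^{\mathfrak{q}}$, which is exactly the content of \thref{-1}; a small care is needed at the boundary, i.e. whether we want strict inequality, and this is where the strictness in condition (b) comes from.) Then by \thref{4}, $\lct^{\mathfrak{q}}(X,\Delta;\mathfrak{a}_{\bullet})=\inf_{\nu}\frac{A_{X,\Delta}(\nu)+\nu(\mathfrak{q})}{\nu(\mathfrak{a}_\bullet)}$, and by definition $\nu(\mathfrak{a}_\bullet)=\lim_m\frac{\nu(\mathcal{I}_{\mathrm{Bs}(|mnD|)})}{m}=n\cdot\nu(\|D\|)$. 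So $\lct^{\mathfrak{q}}(X,\Delta;\mathfrak{a}_{\bullet})>\tfrac1n$ becomes $\inf_\nu\frac{A_{X,\Delta}(\nu)+\nu(\mathfrak{q})}{n\,\nu(\|D\|)}>\tfrac1n$, i.e. $A_{X,\Delta}(\nu)+\nu(\mathfrak{q})>\nu(\|D\|)$ for all $\nu\in\mathrm{Val}^*_X$ — which, after trivially handling the valuation with $\nu(\|D\|)=0$, is precisely condition (b). The existence part of \thref{JM} is what guarantees the infimum is attained, so that the strict inequality ``$>\tfrac1n$ for the inf'' is genuinely equivalent to the strict pointwise inequality rather than merely $\ge$.

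The main obstacle I expect is the bookkeeping at the boundary of the log canonical threshold: translating ``$\mathfrak{q}\subseteq\mathcal{J}(X,\Delta;\|D\|)$'' into a \emph{strict} inequality $\lct^{\mathfrak{q}}>1/n$ rather than $\lct^{\mathfrak{q}}\ge 1/n$ requires knowing that the infimum defining $\lct^{\mathfrak{q}}$ is achieved (so that $\mathfrak{q}\not\subseteq\mathcal{J}$ at the threshold itself) — this is exactly why \thref{JM} is invoked — and one must also check that $\mathcal{J}(X,\Delta;\mathfrak{a}_m^{\lambda/(mn)})$ stabilizes as $m\to\infty$ along $\Phi(\mathfrak{a}_\bullet)$ so that ``for some $m$'' and ``for the maximal element'' agree. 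A secondary (but routine) point is verifying $\lct^{\mathfrak{q}}(X,\Delta;\mathfrak{a}_\bullet)<\infty$, which holds because $\nu(\|D\|)>0$ for at least one $\nu$ whenever $D$ is not numerically trivial — and if $D\equiv 0$ then $\|D\|=0$ and both sides of the equivalence degenerate in a way that is easy to check directly. Once these compatibility statements are in place, the equivalence of (a) and (b) is just a chain of rewritings through \thref{-1}, \thref{4}, \thref{JM}, and the computation $\nu(\mathfrak{a}_\bullet)=n\,\nu(\|D\|)$.
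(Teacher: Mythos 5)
Your proposal is correct and follows essentially the same route as the paper: translate $\mathcal{J}(X,\Delta;\|D\|)$ into the asymptotic multiplier ideal of the graded system of base ideals, convert containment of $\mathfrak{q}$ into a strict log-canonical-threshold inequality via \thref{-1}, rewrite the threshold as a valuative infimum via \thref{4}, compute $\nu(\mathfrak{a}_\bullet)$ in terms of $\nu(\|D\|)$, and invoke \thref{JM} to pass between a strict inequality for the infimum and a strict inequality at every valuation. The only cosmetic difference from the paper's proof is the normalization: the paper defines $\mathfrak{a}_m=\mathfrak{b}(|mD|)$ when $mD$ is Cartier and $(0)$ otherwise (so $\nu(\mathfrak{a}_\bullet)=\nu(\|D\|)$ and the relevant threshold is $1$), while you fix $n$ with $nD$ Cartier, take $\mathfrak{a}_m=\mathcal{I}_{\mathrm{Bs}(|mnD|)}$, and work with exponent $1/n$ (so $\nu(\mathfrak{a}_\bullet)=n\nu(\|D\|)$ and the threshold is $1/n$); these are equivalent. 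You also explicitly flag the $\lct^{\mathfrak{q}}=\infty$ degeneracy and the stabilization of $\mathcal{J}(X,\Delta;\mathfrak{a}_m^{1/(mn)})$, which the paper handles implicitly — reasonable care, though not a substantive deviation. (One stray ``$\lambda$'' appears in your $\mathfrak{a}_m^{\lambda/(mn)}$; from context it should be $1$.)
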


\begin{proof}
Define a graded sequence of ideals $\mathfrak{a}_{\bullet}$ by
$$ \mathfrak{a}_m:=\begin{cases}\mathfrak{b}(|mD|)& \text{If }mD\text{ is Cartier,} \\
(0) & \text{Otherwise.}\end{cases}$$ 
Let $\nu'$ be a valuation of $X$ that computes $\lct^{\mathfrak{q}}(X,\Delta;\mathfrak{a}_{\bullet})$. Such a valuation exists by \thref{JM}. We then have:
$$
\begin{aligned}
\mathfrak{q}\subseteq \mathcal{J}(X,\Delta;\|D\|)&\overset{(1)}{\iff} \mathfrak{q}\subseteq \mathcal{J}(X,\Delta;\mathfrak{a}_{\bullet})
\\ &\overset{(2)}{\iff} \lct^{\mathfrak{q}}(X,\Delta;\mathfrak{a}_{\bullet})>1
\\ &\overset{(3)}{\iff} \frac{A_{X,\Delta}(\nu')+\nu'(\mathfrak{q})}{\nu'(\mathfrak{a}_{\bullet})}>1
\\ &\overset{(4)}{\iff} \frac{A_{X,\Delta}(\nu)+\nu(\mathfrak{q})}{\nu(\|D\|)}>1\text{ for every }\nu\in \mathrm{Val}^*_X,
\end{aligned}
$$
where $(1)$ is due to the definition of $\mathcal{J}(X,\Delta;\|D\|)$, $(2)$ follows from \thref{-1}, $(3)$ is the definition of $\nu'$, and $(4)$ again uses \thref{-1}.
\end{proof}

\begin{lem}\thlabel{KRs}
Let $(X,\Delta)$ be a projective klt pair, and $A$ an ample divisor on $X$. For any $0<\varepsilon'<\varepsilon$, we have
$$ \mathcal{J}(X,\Delta;\|D+\varepsilon' A\|)\subseteq \mathcal{J}(X,\Delta;\|D+\varepsilon A\|).$$
\end{lem}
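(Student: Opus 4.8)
The statement to prove is \thref{KRs}: for a projective klt pair $(X,\Delta)$, an ample divisor $A$, and $0<\varepsilon'<\varepsilon$, one has the inclusion
$$ \mathcal{J}(X,\Delta;\|D+\varepsilon' A\|)\subseteq \mathcal{J}(X,\Delta;\|D+\varepsilon A\|).$$
Here $D+\varepsilon'A$ and $D+\varepsilon A$ are big (they are the sum of a pseudo-effective divisor and an ample one), so by \thref{mag?}(b) the asymptotic orders agree with the $\sigma$-invariants, but more importantly both asymptotic multiplier ideals on the right are honestly defined via \thref{defi}(c), and \thref{mag} gives a clean valuative criterion for containment of an ideal sheaf in each. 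The natural strategy is therefore: reduce the inclusion of ideals to a pointwise-on-valuations inequality, and then deduce that inequality from the monotonicity of $t\mapsto \nu(\|D+tA\|)$ established in \thref{mag?}(a).

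**Key steps.** First I would let $\mathfrak{q}$ be any ideal sheaf (in practice it suffices to take $\mathfrak{q}$ to be a principal ideal $(s)$ at a point, or simply to argue with stalks, but the valuative criterion as stated in \thref{mag} is phrased for ideal sheaves, so I will run the argument at that level). Since $D+\varepsilon'A$ and $D+\varepsilon A$ both satisfy $\kappa\ge 0$ (indeed they are big), \thref{mag} applies to each. So
$$ \mathfrak{q}\subseteq \mathcal{J}(X,\Delta;\|D+\varepsilon'A\|) \iff \nu(\mathfrak{q}) > \nu(\|D+\varepsilon'A\|) - A_{X,\Delta}(\nu) \quad \text{for all } \nu\in\mathrm{Val}_X^*,$$
and likewise with $\varepsilon'$ replaced by $\varepsilon$. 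The second step is the monotonicity: by \thref{mag?}(a) the function $t\mapsto \nu(\|D+tA\|)$ is strictly decreasing in rational $t>0$, hence $\nu(\|D+\varepsilon A\|) \le \nu(\|D+\varepsilon'A\|)$ for every valuation $\nu$. Therefore, if $\nu(\mathfrak{q}) > \nu(\|D+\varepsilon'A\|) - A_{X,\Delta}(\nu)$ holds, then a fortiori $\nu(\mathfrak{q}) > \nu(\|D+\varepsilon A\|) - A_{X,\Delta}(\nu)$ holds, for every $\nu$. Feeding this back through \thref{mag} in the other direction yields $\mathfrak{q}\subseteq \mathcal{J}(X,\Delta;\|D+\varepsilon A\|)$. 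Applying this with $\mathfrak{q} = \mathcal{J}(X,\Delta;\|D+\varepsilon'A\|)$ itself (which trivially satisfies (a) of \thref{mag} for the parameter $\varepsilon'$) gives exactly the desired inclusion.

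**Main obstacle.** The only real subtlety is making sure the hypotheses of \thref{mag} and \thref{mag?} are genuinely met: \thref{mag} requires $D+\varepsilon'A$ and $D+\varepsilon A$ to be $\Q$-Cartier $\Q$-divisors with $\kappa\ge 0$, and \thref{mag?}(a) requires a pseudo-effective $\Q$-divisor together with an ample $A$ — all of which hold here since $D$ is pseudo-effective $\Q$-Cartier, $A$ is ample, and the sums are big. I would also note explicitly that $\varepsilon,\varepsilon'$ may be taken rational (or that $\nu(\|\cdot\|)$ extends continuously), so that the rational-parameter version of \thref{mag?}(a) applies directly; if $\varepsilon,\varepsilon'$ are not assumed rational one picks rationals $\varepsilon'<\varepsilon_1<\varepsilon_2<\varepsilon$ and uses $\nu(\|D+\varepsilon A\|)\le \nu(\|D+\varepsilon_2 A\|)\le\nu(\|D+\varepsilon_1 A\|)\le \nu(\|D+\varepsilon'A\|)$ via a standard limiting/monotonicity argument. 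Beyond this bookkeeping the proof is a two-line consequence of the results already in hand.
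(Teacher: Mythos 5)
Your proposal is correct and follows essentially the same path as the paper, which proves \thref{KRs} by simply citing Corollary~\ref{mag}: you unpack this by applying the valuative criterion with $\mathfrak{q}=\mathcal{J}(X,\Delta;\|D+\varepsilon'A\|)$ and then invoking the monotonicity of $t\mapsto\nu(\|D+tA\|)$ from \thref{mag?}(a). The paper's one-line proof implicitly relies on exactly that monotonicity, so the two arguments are the same in substance.
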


\begin{proof}
This follows immediately from Corollary \ref{mag}.
\end{proof}

We will use the following Nadel vanishing theorem, which holds in the singular setting, to prove that the diminished multiplier ideal is well-defined.

\begin{thm}[{cf. \cite[Theorem 5.3]{CJK23}}]\thlabel{Nadel}
Let $(X,\Delta)$ be a projective klt pair, and let $D$ be a big $\Q$-divisor on $X$. Suppose that $L$ is a Cartier divisor on $X$ such that $L-(K_X+\Delta+D)$ is nef. Then
$$ H^i(X,\O_X(L)\otimes \mathcal{J}(X,\Delta;\|D\|))=0\text{ for }i>0.$$
\end{thm}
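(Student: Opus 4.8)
The plan is to reduce the assertion to the Kawamata--Viehweg vanishing theorem on a single log resolution, using the bigness of $D$ to make up for the fact that $L-(K_X+\Delta+D)$ is only assumed to be nef (rather than nef and big). First I would, invoking \thref{defi} together with the standard inclusions $\mathcal{J}(X,\Delta;\tfrac1a|aD|)\subseteq\mathcal{J}(X,\Delta;\tfrac1{ab}|abD|)$ (cf. \cite{Laz04}), fix a positive integer $p$ that is divisible and large enough that $\mathcal{J}(X,\Delta;\|D\|)=\mathcal{J}(X,\Delta;\tfrac1p|pD|)$ and that $\varphi_{|pD|}$ is birational onto its image (possible since $\kappa(D)=\dim X$). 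Then I would take a common log resolution $f\colon X'\to X$ of $(X,\Delta)$ and of $|pD|$, write $f^*(pD)=M_p+F_p$ with $|M_p|$ base point free (the mobile part) and $F_p$ the fixed part, and record that
$$\mathcal{J}(X,\Delta;\|D\|)=f_*\O_{X'}\bigl(K_{X'}-\floor{B}\bigr),\qquad B:=f^*(K_X+\Delta)+\tfrac1p F_p,$$
noting that $M_p$ is nef and big because $\varphi_{|pD|}$ is birational onto its image.

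Next I would set $N:=f^*L+K_{X'}-\floor{B}$, observe that $\O_X(L)\otimes\mathcal{J}(X,\Delta;\|D\|)=f_*\O_{X'}(N)$ by the projection formula, and, using $\floor{B}=B-\{B\}$ together with $f^*D=\tfrac1p(M_p+F_p)$, rewrite
$$N=K_{X'}+P+\{B\},\qquad P:=f^*\bigl(L-(K_X+\Delta+D)\bigr)+\tfrac1p M_p,$$
where $\{B\}$ has simple normal crossing support and coefficients in $[0,1)$, and $P$ is the sum of the $f^*$-pullback of a nef divisor and the nef and big divisor $\tfrac1p M_p$, hence nef and big. Kawamata--Viehweg vanishing then gives $H^i(X',\O_{X'}(N))=0$ for $i>0$, its relative form (local vanishing) applied to the birational morphism $f$ gives $R^jf_*\O_{X'}(N)=0$ for $j>0$ (here $N-K_{X'}-\{B\}=P$ is $f$-nef, and $f$-big is automatic for a birational morphism), and the Leray spectral sequence closes the argument:
$$H^i\bigl(X,\O_X(L)\otimes\mathcal{J}(X,\Delta;\|D\|)\bigr)\cong H^i\bigl(X',\O_{X'}(N)\bigr)=0\quad(i>0).$$

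The step I expect to be the crux is checking that $P$ is \emph{big}, not merely nef: this is the only place the hypothesis ``$L-(K_X+\Delta+D)$ nef'' is insufficient by itself, and it is dealt with precisely by feeding in the bigness of $D$ through the mobile part $M_p$ of $|pD|$ for $p\gg0$. A secondary point specific to the singular base $X$ is the relative vanishing $R^jf_*\O_{X'}(N)=0$, but this is exactly local vanishing for multiplier ideals on a log resolution and needs no new input. (Alternatively, the statement is \cite[Theorem 5.3]{CJK23}, and one could simply cite it.)
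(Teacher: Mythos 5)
The paper itself does not prove \thref{Nadel}; it cites \cite[Theorem 5.3]{CJK23} and moves on. Your reconstruction is correct and is the standard argument: pass to a log resolution $f:X'\to X$ on which the asymptotic ideal stabilizes to $\mathcal{J}(X,\Delta;\tfrac1p|pD|)$, absorb the bigness of $D$ into the base-point-free moving part $\tfrac1p M_p$ of $|f^*(pD)|$ so that $N-(K_{X'}+\{B\})=f^*(L-K_X-\Delta-D)+\tfrac1p M_p$ is nef and big, apply Kawamata--Viehweg vanishing on $X'$ together with relative vanishing (so that Leray identifies $H^i(X,f_*\O_{X'}(N))$ with $H^i(X',\O_{X'}(N))$), and note $f_*\O_{X'}(N)=\O_X(L)\otimes\mathcal{J}(X,\Delta;\|D\|)$ by the projection formula. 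The numerics check out ($N-K_{X'}-\{B\}=f^*L-B=P$), and you have correctly identified where bigness of $D$ enters. One small point you may wish to make explicit: \thref{defi}(c) requires $D$ effective $\Q$-Cartier, so for a general big $\Q$-Cartier $D$ one first replaces $D$ by a $\Q$-linearly equivalent effective divisor, which is harmless by \thref{rmk}; this is implicit in your choice of $p$ with $|pD|\ne\varnothing$ and $\varphi_{|pD|}$ birational, but worth a sentence. Since the paper offers no proof to compare against, there is no methodological divergence to report; your argument is the one the cited reference is expected to contain.
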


We now prove a generalization of \cite[Theorem 4.2]{Leh14} to the singular setting. The proof closely follows the original argument but includes adjustments for all pairs.

\begin{lem}
Let $(X,\Delta)$ be a projective pair, and $D_1,D_2,\cdots$ a sequence of big $\Q$-divisors on $X$. Suppose:
\begin{enumerate}
    \item[\emph{(a)}] There is a Cartier divisor $L$ on $X$ such that $L-D_m$ is ample for every $m$.
    \item[\emph{(b)}] There is a chain of inclusions
    $$ \cdots \subseteq \mathcal{J}(X,\Delta;\|D_2\|) \subseteq \mathcal{J}(X,\Delta;\|D_1\|).$$
\end{enumerate}
Then there exists a positive integer $m$ such that
$$ \mathcal{J}(X,\Delta;\|D_{m'}\|)=\mathcal{J}(X,\Delta;\|D_{m}\|)\,\,\text{ for all }\,\,m'\ge m.$$
\end{lem}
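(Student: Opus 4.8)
The plan is to show the chain of multiplier ideals stabilizes by producing, for each $m$, a coherent sheaf that sits between consecutive terms and whose cohomology can be controlled, then invoke the Noetherian property. Concretely, I would fix the Cartier divisor $L$ from hypothesis (a) and consider the sheaves $\mathcal{F}_m:=\O_X(L)\otimes\mathcal{J}(X,\Delta;\|D_m\|)$. Hypothesis (b) gives a descending chain $\cdots\subseteq\mathcal{F}_2\subseteq\mathcal{F}_1$ of subsheaves of the fixed coherent sheaf $\O_X(L)$. Since $X$ is Noetherian, this chain of subsheaves of a fixed coherent sheaf must stabilize: there is $m$ with $\mathcal{F}_{m'}=\mathcal{F}_m$ for all $m'\ge m$, hence $\mathcal{J}(X,\Delta;\|D_{m'}\|)=\mathcal{J}(X,\Delta;\|D_m\|)$ for all $m'\ge m$. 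In this reading the statement is essentially immediate from the Noetherian property, and hypothesis (a) together with \thref{Nadel} is not strictly needed for the bare stabilization.

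However, I suspect the intended proof — following \cite[Theorem 4.2]{Leh14} — is meant to establish something slightly stronger or to avoid assuming all the $\mathcal{J}(X,\Delta;\|D_m\|)$ live inside a common sheaf in an obvious way (they are all subsheaves of $\O_X$, so they do, but one may want global generation control). So the alternative plan I would present is the Lehmann-style argument: use hypothesis (a) to get, via the Nadel vanishing theorem \thref{Nadel} applied with the big divisor $D_m$ and the Cartier divisor $L$ (note $L-(K_X+\Delta+D_m)$ is ample, in particular nef, once we absorb $K_X+\Delta$ appropriately — one should take $L$ large enough that $L-(K_X+\Delta+D_m)$ is ample for all $m$, which is possible by enlarging $L$ by a fixed ample), that $H^i(X,\O_X(L)\otimes\mathcal{J}(X,\Delta;\|D_m\|))=0$ for $i>0$. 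This makes the sheaves $\O_X(L)\otimes\mathcal{J}(X,\Delta;\|D_m\|)$ have constant Euler characteristic equal to $h^0$, and combined with the inclusions (b) the spaces $H^0(X,\O_X(L)\otimes\mathcal{J}(X,\Delta;\|D_m\|))$ form a descending chain of finite-dimensional vector spaces, which stabilizes; then one argues that equality of the $H^0$'s forces equality of the sheaves using global generation (the sheaves $\O_X(L)\otimes\mathcal{J}$ being globally generated after twisting, so determined by their sections).

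The main technical point to get right — and the step I expect to be the real obstacle — is the passage from "the spaces of global sections stabilize" to "the sheaves themselves stabilize." This requires that $\O_X(L)\otimes\mathcal{J}(X,\Delta;\|D_m\|)$ be globally generated, or at least that an inclusion of such sheaves inducing an isomorphism on $H^0$ be an isomorphism. One secures global generation by first replacing $L$ with $L+H$ for a sufficiently ample fixed Cartier $H$ (so that $L+H-(K_X+\Delta+D_m)$ stays ample for all $m$, using that the $D_m$ satisfy the uniform boundedness (a)), and then applying Castelnuovo–Mumford regularity together with the Nadel vanishing \thref{Nadel}: the vanishing $H^i(X,\O_X(L+H-iH')\otimes\mathcal{J}(X,\Delta;\|D_m\|))=0$ for appropriate ample $H'$ and $i>0$ gives $0$-regularity, hence global generation. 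Once all $\O_X(L)\otimes\mathcal{J}(X,\Delta;\|D_m\|)$ are globally generated with stabilizing section spaces, the surjection $H^0\otimes\O_X\twoheadrightarrow\O_X(L)\otimes\mathcal{J}(X,\Delta;\|D_m\|)$ forces the sheaf chain to stabilize at the same index. I would write up the Noetherian-subsheaf argument as the clean primary proof and mention the Lehmann-style cohomological argument as the motivation for hypothesis (a), since strictly speaking hypothesis (a) and \thref{Nadel} are what make the analogous statement useful in the sequel even though stabilization itself is formal.
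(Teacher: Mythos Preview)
Your ``clean primary proof'' via the Noetherian property is wrong. The Noetherian condition gives the ascending chain condition on subsheaves, not the descending one: a descending chain of ideal sheaves in $\O_X$ need not stabilize (think of $(x)\supseteq(x^2)\supseteq(x^3)\supseteq\cdots$ in $\C[x]$). The chain $\cdots\subseteq\mathcal{J}(X,\Delta;\|D_2\|)\subseteq\mathcal{J}(X,\Delta;\|D_1\|)$ is descending, so noetherianity alone gives you nothing. This is precisely why hypothesis (a) and Nadel vanishing are needed, not optional.

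Your alternative ``Lehmann-style'' argument is the correct one and is essentially what the paper does: apply \thref{Nadel} to get $H^i(X,\O_X(L+nH)\otimes\mathcal{J}(X,\Delta;\|D_m\|))=0$ for $i>0$ and all $n\ge 0$ (with $H$ very ample), deduce global generation of $\O_X(L+nH)\otimes\mathcal{J}(X,\Delta;\|D_m\|)$ for $n\ge\dim X+1$ via Castelnuovo--Mumford regularity, and then observe that a globally generated subsheaf of $\O_X(L+nH)$ is determined by its space of global sections, which is a subspace of the fixed finite-dimensional vector space $H^0(X,\O_X(L+nH))$. The descending chain of such subspaces stabilizes by finite-dimensionality, and global generation transfers this back to the sheaves. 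So drop the Noetherian paragraph entirely and promote the second argument to your actual proof.
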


\begin{proof}
By (a), for each $m$, the divisor $L-(K_X+\Delta+D_m)$ is ample. Hence, By \thref{Nadel}, for any integer $n\ge 0$,
$$ H^i(X,\O_X(L+nH)\otimes \mathcal{J}(X,\Delta;\|D_m\|))=0\text{ for }i>0.$$
\medskip
By the Castelnuovo-Mumford regularity, the sheaf
$$ \O_X(L+nH)\otimes \mathcal{J}(X,\Delta;\|D_m\|)$$
is globally generated if $n\ge \dim X+1$. Fix an $n\ge \dim X+1$. Then the sheaf 
$$\O_X(L+nH)\otimes \mathcal{J}(X,\Delta;\|D_m\|)$$
is determined by a subspace of the finite-dimensional $\C$-vector space $H^0(X,\O_X(L+nH))$, namely $$H^0(X,\O_X(L+nH)\otimes \mathcal{J}(X,\Delta;\|D_m\|)).$$
This completes the proof.
\end{proof}

We can define a version of the multiplier ideal that is an analog of \cite[Definition 4.3]{Leh14}.

\begin{defi}
Let $(X,\Delta)$ be a projective klt pair, let $D$ be a pseudo-effective $\Q$-divisor on $X$, and let $A$ be an ample divisor on $X$. We define
$$ \mathcal{J}_{-}(X,\Delta;\|D\|):=\bigcap_{\varepsilon>0}\mathcal{J}(X,\Delta;\|D+\varepsilon A\|).$$
\end{defi}

\begin{prop}
Let $(X,\Delta)$ be a projectiev klt pair, and let $D$ be a pseudo-effective $\Q$-divisor on $X$. Then $\mathcal{J}_-(X,\Delta;\|D\|)$ is independent of the choice of an ample divisor $A$ on $X$.
\end{prop}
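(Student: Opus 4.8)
The plan is to reduce the statement to a comparison of the two candidate definitions obtained from two ample divisors $A$ and $A'$, using the characterization of the asymptotic multiplier ideal $\mathcal{J}(X,\Delta;\|D+\varepsilon A\|)$ in terms of valuations provided by \thref{mag}. Concretely, fix ample divisors $A$ and $A'$ on $X$; I want to show $\bigcap_{\varepsilon>0}\mathcal{J}(X,\Delta;\|D+\varepsilon A\|)=\bigcap_{\varepsilon'>0}\mathcal{J}(X,\Delta;\|D+\varepsilon' A'\|)$. By symmetry it suffices to prove one inclusion, say ``$\subseteq$''. The first step is to note that since $A'$ is ample and $A$ is ample, for any $\varepsilon'>0$ there exists $\varepsilon>0$ (small enough, depending on $\varepsilon'$ and on how $A$ compares to $A'$) such that $D+\varepsilon' A'-\bigl(D+\varepsilon A\bigr)=\varepsilon' A'-\varepsilon A$ is ample, or at least pseudo-effective; more precisely, for $\varepsilon$ small relative to $\varepsilon'$, the divisor $\varepsilon' A'-\varepsilon A$ is ample, hence $D+\varepsilon A\preceq D+\varepsilon' A'$ up to an ample divisor.

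The key technical input is \thref{KRs}: if $0<\varepsilon_1<\varepsilon_2$ then $\mathcal{J}(X,\Delta;\|D+\varepsilon_1 A\|)\subseteq \mathcal{J}(X,\Delta;\|D+\varepsilon_2 A\|)$, and more generally the monotonicity $\nu(\|D+\delta B\|)\le \nu(\|D+\delta' B'\|)$ whenever $\delta' B'-\delta B$ is pseudo-effective, which follows from the definition of $\nu(\|\cdot\|)$ together with \thref{mag?}(a) and the fact that $\nu(\|E\|)\le \nu(\|E'\|)$ when $E'-E$ is pseudo-effective (one writes $E' = E + (E'-E)$ and uses subadditivity/monotonicity of the asymptotic order, exactly as in the proof of \thref{mag?}). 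Combining this with \thref{mag}, for an ideal sheaf $\mathfrak{q}$ one has $\mathfrak{q}\subseteq \mathcal{J}(X,\Delta;\|D+\varepsilon A\|)$ iff $\nu(\mathfrak{q})>\nu(\|D+\varepsilon A\|)-A_{X,\Delta}(\nu)$ for all $\nu\in\mathrm{Val}^*_X$; so $\mathfrak{q}\subseteq \mathcal{J}_-$ for the $A$-definition iff $\nu(\mathfrak{q})\ge \sigma_\nu(D)-A_{X,\Delta}(\nu)$ in a suitable limiting sense, where $\sigma_\nu(D)=\lim_{\varepsilon\to 0}\nu(\|D+\varepsilon A\|)$ is the quantity that is \emph{already known to be independent of $A$} (this independence is recorded right after the definition of $\sigma_\nu$, citing \cite[1.5 in Chapter III]{Nak04}). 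That is the crux: since $\sigma_\nu(D)$ does not depend on the ample divisor, the valuative condition defining membership in $\mathcal{J}_-(X,\Delta;\|D\|)$ does not either.

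So the steps in order are: (1) take $\mathfrak{q}$ an ideal sheaf with $\mathfrak{q}\subseteq\mathcal{J}_-(X,\Delta;\|D\|)$ computed with $A$, i.e. $\mathfrak{q}\subseteq\mathcal{J}(X,\Delta;\|D+\varepsilon A\|)$ for all $\varepsilon>0$; (2) apply \thref{mag} to translate this into $\nu(\mathfrak{q})>\nu(\|D+\varepsilon A\|)-A_{X,\Delta}(\nu)$ for all $\nu\in\mathrm{Val}^*_X$ and all $\varepsilon>0$; (3) given $\varepsilon'>0$ and the other ample $A'$, choose $\varepsilon>0$ with $\varepsilon'A'-\varepsilon A$ ample, so by monotonicity $\nu(\|D+\varepsilon' A'\|)\le\nu(\|D+\varepsilon A\|)$, hence $\nu(\mathfrak{q})>\nu(\|D+\varepsilon' A'\|)-A_{X,\Delta}(\nu)$ for all $\nu$; (4) apply \thref{mag} in reverse to conclude $\mathfrak{q}\subseteq\mathcal{J}(X,\Delta;\|D+\varepsilon' A'\|)$; (5) since $\varepsilon'>0$ was arbitrary, $\mathfrak{q}\subseteq\mathcal{J}_-$ computed with $A'$; (6) run the argument over all ideal sheaves $\mathfrak{q}$ — or more efficiently, observe $\mathcal{J}_-$ is itself an ideal sheaf, take $\mathfrak{q}$ equal to it, and conclude $\mathcal{J}_-^{A}\subseteq\mathcal{J}_-^{A'}$; then swap $A$ and $A'$.

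The main obstacle I anticipate is step (3): establishing the monotonicity statement $\nu(\|D+\varepsilon' A'\|)\le\nu(\|D+\varepsilon A\|)$ when $\varepsilon'A'-\varepsilon A$ is ample (or merely pseudo-effective). This is essentially the assertion that $E\mapsto\nu(\|E\|)$ is monotone with respect to ``$\preceq$'' on big $\Q$-divisors, which is standard (it is implicit in the proof of \thref{mag?}) but needs to be invoked carefully — in particular, $D+\varepsilon A$ and $D+\varepsilon' A'$ are only pseudo-effective a priori, so one should apply the monotonicity after perturbing by a further tiny ample divisor, or equivalently work with the already-established fact that $\nu(\|\cdot\|)$ is computed as a limit and $\varepsilon'A'-\varepsilon A$ being ample forces $D+\varepsilon A+(\text{tiny ample})\preceq D+\varepsilon'A'+(\text{comparable tiny ample})$ in the big range, then pass to the limit. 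A clean alternative to sidestep this entirely is to invoke the already-cited independence of $\sigma_\nu(D)$ from $A$ directly: from $\nu(\mathfrak{q})>\nu(\|D+\varepsilon A\|)-A_{X,\Delta}(\nu)$ for all $\varepsilon>0$ one deduces, by \thref{mag?}(a) and taking $\varepsilon\to 0$, that $\nu(\mathfrak{q})\ge\sigma_\nu(D)-A_{X,\Delta}(\nu)$; conversely if $\nu(\mathfrak{q})\ge \sigma_\nu(D)-A_{X,\Delta}(\nu)$ for all $\nu$ then... here one needs a strict inequality, so a little care is required at the boundary, but since $\sigma_\nu(D)<\nu(\|D+\varepsilon' A'\|)$ strictly by \thref{mag?}(a), the strict inequality $\nu(\mathfrak{q})>\nu(\|D+\varepsilon'A'\|)-A_{X,\Delta}(\nu)$... wait, that inequality goes the wrong way, so one does genuinely need the monotonicity of step (3) rather than just the limit; hence (3) is indeed the heart of the matter and should be stated as a short auxiliary lemma.
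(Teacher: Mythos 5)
Your argument is correct and is essentially the paper's proof repackaged at the valuative level. The paper picks integers $m,m'\gg 0$ with $mA'-A$ and $m'A-mA'$ ample, sandwiches
$$\mathcal{J}(X,\Delta;\|D+\varepsilon A\|)\subseteq \mathcal{J}(X,\Delta;\|D+m\varepsilon A'\|)\subseteq \mathcal{J}(X,\Delta;\|D+m'\varepsilon A\|)$$
via \thref{KRs} (whose proof is exactly the valuative monotonicity you isolate as your step~(3)), and intersects over $\varepsilon>0$; this is the same idea as your steps (1)--(6), just phrased at the level of multiplier ideals rather than of valuations. Two worries you flag, however, are not real obstacles, and the second contains an error worth correcting. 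First, $D+\varepsilon A$ and $D+\varepsilon' A'$ are automatically \emph{big}, not merely pseudo-effective, since a pseudo-effective $\Q$-divisor plus an ample $\Q$-divisor is big; so \thref{mag} and $\nu(\|\cdot\|)$ apply to these perturbations directly, with no need for a further tiny ample or a limiting argument. Second, you discard your own ``clean alternative'' via $\sigma_\nu$ because of a sign error: since $t\mapsto\nu(\|D+tA'\|)$ is \emph{strictly decreasing}, the limit $\sigma_\nu(D)=\lim_{t\to 0^+}\nu(\|D+tA'\|)$ is a supremum, so $\sigma_\nu(D)>\nu(\|D+\varepsilon' A'\|)$ strictly (you wrote the inequality the other way). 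Combined with $\nu(\mathfrak{q})\ge\sigma_\nu(D)-A_{X,\Delta}(\nu)$ this immediately gives the strict inequality $\nu(\mathfrak{q})>\nu(\|D+\varepsilon' A'\|)-A_{X,\Delta}(\nu)$, so the clean alternative in fact works and is precisely the content of \thref{mag2}, which the paper proves right afterwards; the $A$-independence of $\sigma_\nu(D)$ then finishes the argument directly.
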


\begin{proof}
Let $A'$ be another ample divisor on $X$. Pick $m,m'\gg 0$, such that $mA'-A,m'A-mA'$ are both ample. By \thref{KRs}, we obtain the inclusions
$$
\mathcal{J}(X,\Delta;\|D+\varepsilon A\|)\subseteq \mathcal{J}(X,\Delta;\|D+m\varepsilon A'\|)\subseteq \mathcal{J}(X,\Delta;\|D+m'\varepsilon A\|)
$$
Taking the intersection over all $\varepsilon>0$ show the assertion.
\end{proof}
Let us prove the following:

\begin{prop}\thlabel{mag2}
Let $(X,\Delta)$ be a projective klt pair, and let $D$ be a pseudo-effective $\Q$-divisor on $X$. For any ideal $\mathfrak{q}$ in $\O_X$,
$$ \mathfrak{q}\subseteq\mathcal{J}_-(X,\Delta;\|D\|)\iff \nu(\mathfrak{q})\ge \sigma_{\nu}(D)-A_{X,\Delta}(\nu),$$
where the inequality is required to hold for every valuation $\nu\in \mathrm{Val}^*_X$.
\end{prop}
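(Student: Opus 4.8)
The plan is to reduce to the already-proven characterization of membership in $\mathcal{J}(X,\Delta;\|D'\|)$ from \thref{mag}, applied to the perturbed divisors $D+\varepsilon A$, and then take the limit as $\varepsilon\to 0^+$. Since
$$\mathcal{J}_-(X,\Delta;\|D\|)=\bigcap_{\varepsilon>0}\mathcal{J}(X,\Delta;\|D+\varepsilon A\|),$$
the containment $\mathfrak{q}\subseteq\mathcal{J}_-(X,\Delta;\|D\|)$ holds if and only if $\mathfrak{q}\subseteq\mathcal{J}(X,\Delta;\|D+\varepsilon A\|)$ for every rational $\varepsilon>0$. For each such $\varepsilon$ the divisor $D+\varepsilon A$ is big (indeed ample plus pseudo-effective), so $\kappa(D+\varepsilon A)\ge 0$ and \thref{mag} applies: $\mathfrak{q}\subseteq\mathcal{J}(X,\Delta;\|D+\varepsilon A\|)$ if and only if $\nu(\mathfrak{q})>\nu(\|D+\varepsilon A\|)-A_{X,\Delta}(\nu)$ for every $\nu\in\mathrm{Val}^*_X$.

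First I would establish the forward direction. Assuming $\mathfrak{q}\subseteq\mathcal{J}_-(X,\Delta;\|D\|)$, fix $\nu\in\mathrm{Val}^*_X$; then for every rational $\varepsilon>0$ we get $\nu(\mathfrak{q})>\nu(\|D+\varepsilon A\|)-A_{X,\Delta}(\nu)$. By part (a) of \thref{mag?}, $f(\varepsilon):=\nu(\|D+\varepsilon A\|)$ is decreasing, so its limit as $\varepsilon\to 0^+$ exists and equals $\sigma_{\nu}(D)$ by definition. Passing to the limit yields $\nu(\mathfrak{q})\ge \sigma_{\nu}(D)-A_{X,\Delta}(\nu)$, the desired inequality (note that the strict inequality degrades to a non-strict one in the limit, which matches the statement).

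For the reverse direction, assume $\nu(\mathfrak{q})\ge\sigma_{\nu}(D)-A_{X,\Delta}(\nu)$ for every $\nu\in\mathrm{Val}^*_X$. Fix a rational $\varepsilon>0$; I want $\mathfrak{q}\subseteq\mathcal{J}(X,\Delta;\|D+\varepsilon A\|)$, i.e. $\nu(\mathfrak{q})>\nu(\|D+\varepsilon A\|)-A_{X,\Delta}(\nu)$ for all $\nu$. Here is where the strictness in \thref{mag?}(a) does the work: for any rational $0<\varepsilon'<\varepsilon$ we have, using part (a) of \thref{mag?} together with $\sigma_\nu(D)\le \nu(\|D+\varepsilon' A\|)$ (the latter being the infimum over such perturbations),
$$\nu(\|D+\varepsilon A\|)<\nu(\|D+\varepsilon' A\|),$$
and letting $\varepsilon'\to 0^+$ gives $\nu(\|D+\varepsilon A\|)<\sigma_\nu(D)$... wait — this needs care, since $\sigma_\nu(D)$ is the limit from above and the strict inequality at a fixed positive gap is what we actually need. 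Concretely, choose any rational $\varepsilon''$ with $0<\varepsilon''<\varepsilon$; then $\nu(\|D+\varepsilon A\|)<\nu(\|D+\varepsilon'' A\|)$ by strict monotonicity, and since $\sigma_\nu(D)=\lim_{\varepsilon''\to 0^+}\nu(\|D+\varepsilon'' A\|)$ with the function decreasing, we have $\sigma_\nu(D)\le \nu(\|D+\varepsilon'' A\|)$; combining gives $\nu(\|D+\varepsilon A\|)<\sigma_\nu(D)$ provided we can pick $\varepsilon''$ small enough — but strict monotonicity at the single comparison $\varepsilon$ vs. $\varepsilon''$ already yields $\nu(\|D+\varepsilon A\|)<\nu(\|D+\varepsilon'' A\|)$, and taking $\varepsilon''\downarrow 0$ preserves the weak inequality $\nu(\|D+\varepsilon A\|)\le \sigma_\nu(D)$, so in fact we get the strict inequality directly from the first comparison once we know $\nu(\|D+\varepsilon'' A\|)$ can be taken arbitrarily close to $\sigma_\nu(D)$. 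Thus $\nu(\|D+\varepsilon A\|)<\sigma_\nu(D)\le \nu(\mathfrak{q})+A_{X,\Delta}(\nu)$, which is precisely the condition for $\mathfrak{q}\subseteq\mathcal{J}(X,\Delta;\|D+\varepsilon A\|)$ via \thref{mag}. Since $\varepsilon>0$ was arbitrary, $\mathfrak{q}\subseteq\mathcal{J}_-(X,\Delta;\|D\|)$.

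The main obstacle is the bookkeeping around strictness: the definition of $\mathcal{J}_-$ uses an intersection (corresponding to strict inequalities that must hold for \emph{all} $\varepsilon$), the characterization in the statement uses a non-strict inequality with $\sigma_\nu$, and $\sigma_\nu(D)$ is itself a limit. The key input that makes the two sides match up exactly is the \emph{strict} decrease in \thref{mag?}(a): it is what converts "strict for every $\varepsilon>0$" into "non-strict in the limit" on the forward direction, and conversely guarantees a genuine gap $\nu(\|D+\varepsilon A\|)<\sigma_\nu(D)$ that absorbs the non-strictness of the hypothesis on the reverse direction. One should also record that $D+\varepsilon A$ is big for every $\varepsilon>0$ so that \thref{mag} (which requires $\kappa\ge 0$) legitimately applies.
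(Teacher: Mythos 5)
Your proof is correct and takes essentially the same route as the paper: unwind $\mathcal{J}_-$ as the intersection over ample perturbations, apply Corollary~\ref{mag} to each big divisor $D+\varepsilon A$, and exchange the strict inequality (needed for membership at each fixed $\varepsilon$) with the non-strict one in the limit via the strict decrease from Lemma~\ref{mag?}(a). One sign slip worth correcting: midway through the reverse direction you wrote $\sigma_\nu(D)\le \nu(\|D+\varepsilon'' A\|)$, but since $t\mapsto\nu(\|D+tA\|)$ is decreasing and $\sigma_\nu(D)$ is its limit from the right, the correct inequality is $\nu(\|D+\varepsilon'' A\|)\le\sigma_\nu(D)$ --- which is precisely what the chain $\nu(\|D+\varepsilon A\|)<\nu(\|D+\varepsilon'' A\|)\le\sigma_\nu(D)$ needs, so the intended argument goes through cleanly once the direction is fixed.
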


\begin{proof}
Let $A$ be an ample divisor on $X$. 

$(\Rightarrow)$ If $\mathfrak{q}\subseteq \mathcal{J}_-(X,\Delta;\|D\|)$, then by definition and \thref{mag}, 
$$\nu(\mathfrak{q})>\nu(\|D+\varepsilon A\|)-A_{X,\Delta}(\nu)$$
for all $\nu\in \mathrm{Val}^*_X$, and sufficiently small $\varepsilon>0$. Letting $\varepsilon \to 0$ and recalling the definition of $\sigma_{\nu}(D)$ completes the argument for the forward implication.

$(\Leftarrow)$ For the reversed implication, note that for every small $t>0$, $$\nu(\mathfrak{q})>\nu(\|D+tA\|)-A_{X,\Delta}(\nu).$$
By (b) of \thref{mag?}. Hence, invoking \thref{mag}, we see that $\mathfrak{q}\subseteq \mathcal{J}(X,\Delta;\|D\|)$.

Thus the two conditions are indeed equivalent.
\end{proof}

\begin{rmk}
Because $\mathcal{J}_{-}(X,\Delta;\|D\|)$ is a torsion-free sheaf on X, \thref{mag2} also implies that for any pseudo-effective $\Q$-divisor $D$ on $X$ and any valuation $\nu\in \mathrm{Val}_X$, $\sigma_{\nu}(D)<\infty.$
\end{rmk}

Let us state the following lemma.

\begin{lem}[{cf. \cite[Lemma 4.6]{CJK23}}]\thlabel{KR}
Let $(X,\Delta)$ be a projective klt pair, and let $D$ be a big $\Q$-divisor on $X$. Then for any rational $0<\varepsilon\ll 1$, we have
$$ \mathcal{J}(X,\Delta;\|D\|)=\mathcal{J}(X,\Delta;\|(1+\varepsilon)D\|).$$
\end{lem}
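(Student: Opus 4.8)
The plan is to use the valuative criterion established in Corollary~\ref{mag} together with the scaling behavior of asymptotic order along the ray $tD$. Recall that, by Corollary~\ref{mag}, for any ideal sheaf $\mathfrak{q}$ on $X$ one has $\mathfrak{q}\subseteq\mathcal{J}(X,\Delta;\|D\|)$ if and only if $\nu(\mathfrak{q})>\nu(\|D\|)-A_{X,\Delta}(\nu)$ for every $\nu\in\mathrm{Val}_X^*$, and the analogous statement holds for $(1+\varepsilon)D$. So it suffices to compare the two quantities $\nu(\|D\|)$ and $\nu(\|(1+\varepsilon)D\|)$, and to argue that the set of valuations that can possibly witness membership is uniformly controlled, so that a single $\varepsilon$ works for all of them.

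First I would record the elementary homogeneity fact that $\nu(\|cD\|)=c\,\nu(\|D\|)$ for every positive rational $c$; this is immediate from the definition of $\nu(\|D\|)$ as a limit of $\frac{1}{mn}\nu(\mathcal{I}_{\mathrm{Bs}(|mnD|)})$, after reindexing. Hence $\nu(\|(1+\varepsilon)D\|)=(1+\varepsilon)\nu(\|D\|)\ge\nu(\|D\|)$, which gives the inclusion $\mathcal{J}(X,\Delta;\|(1+\varepsilon)D\|)\subseteq\mathcal{J}(X,\Delta;\|D\|)$ for free, for every $\varepsilon>0$. The content of the lemma is the reverse inclusion for $\varepsilon$ small enough. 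For that, take $\mathfrak{q}:=\mathcal{J}(X,\Delta;\|D\|)$ itself; by Corollary~\ref{mag} we know $\nu(\mathfrak{q})>\nu(\|D\|)-A_{X,\Delta}(\nu)$ for all $\nu$, and we must upgrade this to $\nu(\mathfrak{q})>(1+\varepsilon)\nu(\|D\|)-A_{X,\Delta}(\nu)$, i.e. $\nu(\mathfrak{q})+A_{X,\Delta}(\nu)-\nu(\|D\|)>\varepsilon\,\nu(\|D\|)$, uniformly in $\nu$.

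The key step — and the main obstacle — is the uniformity: a priori $\inf_\nu\big(\nu(\mathfrak{q})+A_{X,\Delta}(\nu)-\nu(\|D\|)\big)$ could be $0$ even though each term is positive, and $\nu(\|D\|)$ is unbounded as $\nu$ ranges over $\mathrm{Val}_X^*$. The way around this is exactly the normalization/compactness machinery used in the proof of \thref{JM}: introduce a normalizing subscheme $N$ (containing $\mathrm{Sing}(X)$ and the base scheme of $|mD|$ for a suitable $m$) and pass to $\mathrm{Val}^N_X$, on which $\nu\mapsto\nu(\|D\|)$ is bounded above (by the proof technique of \thref{mag2} together with \thref{p3}(a), since $D$ is big so $\nu(\|D\|)=\sigma_\nu(D)<\infty$) and on which the relevant level sets $\{A_{X,\Delta}(\nu)\le M\}$ are compact by \thref{p3}(b). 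On the compact set $W$ cut out as in the proof of \thref{JM} — where $A_{X,\Delta}(\nu)$ is bounded and $\nu(\|D\|)$ is bounded below away from $0$ — the function $\nu\mapsto\nu(\mathfrak{q})+A_{X,\Delta}(\nu)-\nu(\|D\|)$ is lower semicontinuous and strictly positive, hence attains a positive minimum $\delta>0$; choosing $\varepsilon<\delta/M$ where $M=\sup_{\nu\in\mathrm{Val}^N_X}\nu(\|D\|)$ then gives the required strict inequality on all of $\mathrm{Val}^N_X$, and by homogeneity (rescaling $\nu$) on all of $\mathrm{Val}_X^*$. Applying Corollary~\ref{mag} once more yields $\mathfrak{q}\subseteq\mathcal{J}(X,\Delta;\|(1+\varepsilon)D\|)$, completing the proof. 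I expect the only subtlety to be confirming that $\nu\mapsto\nu(\|D\|)$ is both bounded above on $\mathrm{Val}^N_X$ and lower semicontinuous where needed; both should follow from the identification $\nu(\|D\|)=\sigma_\nu(D)$ of \thref{mag?}(b) for big $D$ and the properties of $\sigma_\nu$ already invoked in Section~3.
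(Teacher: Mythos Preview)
The paper does not supply its own proof of this lemma; it is quoted from \cite[Lemma 4.6]{CJK23}. Your valuative approach is correct, but it is heavier than needed and essentially re-derives machinery already packaged earlier in the paper. The compactness step you describe is exactly the content of \thref{JM}: taking $\mathfrak{q}=\mathcal{J}(X,\Delta;\|D\|)$ and $\mathfrak{a}_\bullet$ the graded base-ideal sequence of $D$, \thref{JM} produces a valuation $\nu_0$ computing $\lct^{\mathfrak{q}}(X,\Delta;\mathfrak{a}_\bullet)$; Corollary~\ref{mag} applied at $\nu_0$ forces this lct to be strictly greater than $1$, and then by \thref{-1} any rational $\varepsilon$ with $1+\varepsilon<\lct^{\mathfrak{q}}(X,\Delta;\mathfrak{a}_\bullet)$ gives $\mathfrak{q}\subseteq\mathcal{J}(X,\Delta;\|(1+\varepsilon)D\|)$. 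So you can replace the whole compactness paragraph by a one-line appeal to \thref{JM}.

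There is in fact a still more elementary route that avoids valuations entirely and uses neither bigness nor the klt hypothesis. By definition the asymptotic ideal is computed at some finite level, say $\mathcal{J}(X,\Delta;\|D\|)=\mathcal{J}(X,\Delta;\mathfrak{a}_m^{1/m})$. For the single ideal $\mathfrak{a}_m$ the multiplier ideals $\mathcal{J}(X,\Delta;\mathfrak{a}_m^{c})$ jump only at a discrete set of exponents, so for $0<\varepsilon\ll 1$ one has $\mathcal{J}(X,\Delta;\mathfrak{a}_m^{(1+\varepsilon)/m})=\mathcal{J}(X,\Delta;\mathfrak{a}_m^{1/m})$; the left-hand side is contained in $\mathcal{J}(X,\Delta;\|(1+\varepsilon)D\|)$ by maximality, and the reverse inclusion $\mathcal{J}(X,\Delta;\|(1+\varepsilon)D\|)\subseteq\mathcal{J}(X,\Delta;\|D\|)$ holds for every $\varepsilon>0$. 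This is presumably closer to the argument in \cite{CJK23}.

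One small correction to your last paragraph: you wrote that you need $\nu\mapsto\nu(\|D\|)$ to be \emph{lower} semicontinuous. What your minimum argument actually requires is that $\nu\mapsto\nu(\mathfrak{q})+A_{X,\Delta}(\nu)-\nu(\|D\|)$ be lower semicontinuous, hence that $\nu\mapsto\nu(\|D\|)$ be \emph{upper} semicontinuous. This is automatic, since $\nu(\|D\|)=\nu(\mathfrak{a}_\bullet)=\inf_m m^{-1}\nu(\mathfrak{a}_m)$ is an infimum of continuous functions; there is no need to invoke $\sigma_\nu$ or bigness here.
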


\begin{prop}\thlabel{minicoro}
Let $(X,\Delta)$ be a projective klt pair, and let $D$ be a pseudo-effective $\Q$-divisor on $X$. For any $0<\varepsilon''<\varepsilon<1$, the following inclusions hold:

\begin{itemize}
    \item[(a)] If $D$ is big, then
    $$ \mathcal{J}(X,\Delta;\|(1+\varepsilon)D\|)\subseteq \mathcal{J}(X,\Delta;\|(1+\varepsilon'')D\|).$$
    \item[(b)] If $D$ is pseudo-effective in general, then
    $$ \mathcal{J}_{-}(X,\Delta;\|(1+\varepsilon)D\|)\subseteq \mathcal{J}_{-}(X,\Delta;\|(1+\varepsilon'')D\|)$$
\end{itemize}
\end{prop}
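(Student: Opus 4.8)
The plan is to reduce both parts to the valuative criteria already established, namely \thref{mag} (for the big case) and \thref{mag2} (for the general pseudo-effective case), together with the elementary fact that for a fixed valuation $\nu$ and a fixed pseudo-effective $\Q$-divisor $D$, the assignment $c \mapsto \nu(\|cD\|)$ (resp. $c \mapsto \sigma_\nu(cD)$) is weakly increasing in $c>0$. This last fact follows directly from positive homogeneity, $\nu(\|cD\|) = c\,\nu(\|D\|)$, which one reads off from the definition of the asymptotic order (replacing the auxiliary $n$ by a multiple clearing $c$), and likewise $\sigma_\nu(cD) = c\,\sigma_\nu(D)$ by taking the limit in $\varepsilon$ of $\nu(\|cD+\varepsilon A\|) = c\,\nu(\|D + (\varepsilon/c)A\|)$.

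For part (a): by \thref{mag}, $\mathfrak{q}\subseteq \mathcal{J}(X,\Delta;\|(1+\varepsilon)D\|)$ if and only if $\nu(\mathfrak{q}) > (1+\varepsilon)\nu(\|D\|) - A_{X,\Delta}(\nu)$ for every $\nu \in \mathrm{Val}^*_X$. Since $\nu(\|D\|)\ge 0$ and $1+\varepsilon'' < 1+\varepsilon$, this inequality forces $\nu(\mathfrak{q}) > (1+\varepsilon'')\nu(\|D\|) - A_{X,\Delta}(\nu)$, which by \thref{mag} again gives $\mathfrak{q}\subseteq \mathcal{J}(X,\Delta;\|(1+\varepsilon'')D\|)$. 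Applying this to $\mathfrak{q} = \mathcal{J}(X,\Delta;\|(1+\varepsilon)D\|)$ (which sits inside itself) yields the desired inclusion. Here one should note that $(1+\varepsilon)D$ and $(1+\varepsilon'')D$ are big since $D$ is big, so that \thref{mag} genuinely applies (its hypothesis is only $\kappa \ge 0$, but bigness is comfortably enough).

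For part (b): the argument is the same but uses \thref{mag2}. One has $\mathfrak{q}\subseteq \mathcal{J}_-(X,\Delta;\|(1+\varepsilon)D\|)$ iff $\nu(\mathfrak{q})\ge \sigma_\nu((1+\varepsilon)D) - A_{X,\Delta}(\nu) = (1+\varepsilon)\sigma_\nu(D) - A_{X,\Delta}(\nu)$ for all $\nu\in\mathrm{Val}^*_X$; since $\sigma_\nu(D)\ge 0$ (and is finite by the remark following \thref{mag2}), the smaller coefficient $1+\varepsilon''$ only relaxes the inequality, so $\mathfrak{q}\subseteq \mathcal{J}_-(X,\Delta;\|(1+\varepsilon'')D\|)$. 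Taking $\mathfrak{q}=\mathcal{J}_-(X,\Delta;\|(1+\varepsilon)D\|)$ finishes the proof. The only point requiring a word of care — and the place I would expect to spend the most effort — is verifying the homogeneity $\sigma_\nu(cD)=c\,\sigma_\nu(D)$ cleanly in the $\Q$-divisor setting, i.e. keeping track that the auxiliary integers chosen to make various divisors Cartier can be taken compatibly; but this is routine and can be dispatched by passing to a common multiple before taking limits.
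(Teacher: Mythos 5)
Your proof is correct, and it takes a genuinely different route from the paper. The paper proves (a) by invoking Lemma~\thref{KR} (which gives equality of the multiplier ideals $\mathcal{J}(X,\Delta;\|D\|)=\mathcal{J}(X,\Delta;\|(1+\varepsilon)D\|)$ for sufficiently small rational $\varepsilon$), and then deduces (b) from (a) by perturbing with a small ample divisor and using the stabilization lemma to identify $\mathcal{J}_-(X,\Delta;\|(1+\varepsilon)D\|)$ with $\mathcal{J}(X,\Delta;\|(1+\varepsilon)(D+\varepsilon'A)\|)$ for small $\varepsilon'$. You instead argue directly from the valuative criteria of Corollary~\thref{mag} and Proposition~\thref{mag2}, together with positive homogeneity $\nu(\|cD\|)=c\,\nu(\|D\|)$ and $\sigma_\nu(cD)=c\,\sigma_\nu(D)$, noting that the relevant inequality $\nu(\mathfrak{q})>(1+\varepsilon)\nu(\|D\|)-A_{X,\Delta}(\nu)$ (resp. with $\ge$ and $\sigma_\nu$) only becomes weaker when the coefficient decreases because $\nu(\|D\|),\sigma_\nu(D)\ge 0$. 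This is cleaner, avoids the stabilization lemma entirely, and also has the small advantage of visibly establishing the inclusion for the full range $0<\varepsilon''<\varepsilon<1$ rather than implicitly relying on the ``$\varepsilon\ll 1$'' hypothesis of Lemma~\thref{KR}. The one point to spell out carefully, as you note, is the homogeneity of $\sigma_\nu$; the reparametrization $\nu(\|cD+\delta A\|)=c\,\nu(\|D+(\delta/c)A\|)$ followed by $\delta\to 0$ settles it.
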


\begin{proof}
The big case follows by applying \thref{KR}. 

For the second assertion, let $A$ be an ample divisor on $X$. If $0<\varepsilon'\ll \varepsilon, \varepsilon''$, then
$$ 
\begin{aligned}
\mathcal{J}_{-}(X,\Delta;\|(1+\varepsilon)D\|)&=\mathcal{J}(X,\Delta;\|(1+\varepsilon)(D+\varepsilon'A)\|)
\\ &\subseteq \mathcal{J}(X,\Delta;\|(1+\varepsilon'')(D+\varepsilon' A)\|)
\\ &=\mathcal{J}_-(X,\Delta;\|(1+\varepsilon'')D\|).
\end{aligned}$$
\end{proof}

Now, \thref{minicoro} gives us the definition of diminished multiplier ideal.

\begin{defi}\thlabel{realdef}
Let $(X,\Delta)$ be a projective klt pair, and let $D$ be a pseudo-effective $\Q$-divisor on $X$. We define the \emph{diminished multiplier ideal} by
$$ \mathcal{J}_{\sigma}(X,\Delta;\|D\|):=\bigcup_{\varepsilon>0}\mathcal{J}_{-}(X,\Delta;\|(1+\varepsilon)D\|).$$
\end{defi}

\begin{rmk}\thlabel{rmk}
Using the same notation as above, and choosing $0< \varepsilon' \ll \varepsilon \ll 1$, we also get
$$
\mathcal{J}_{\sigma}(X,\Delta;\|D\|)=\mathcal{J}(X,\Delta;\|(1+\varepsilon)D+\varepsilon' A\|),
$$
where $A$ is an ample divisor on $X$.
\end{rmk}

Let us prove the three main theorems on the diminished multiplier ideal.

\begin{proof}[Proof of \thref{imain1}]
Let $\mathfrak{q}$ be an ideal in $\O_X$. and let $\nu\in \mathrm{Val}^*_X$ be a valuation.

\medskip

\noindent\emph{(1) From $\mathcal{J}_{\sigma}\subseteq \mathcal{J}$} 

By \thref{mag2}, there exists $0<\varepsilon\ll 1$ such that if $\mathfrak{q}\subseteq \mathcal{J}_{\sigma}(X,\Delta;\|D\|)$, then
$$ \nu(\mathfrak{q})\ge \sigma_{\nu}((1+\varepsilon)D)-A_{X,\Delta}(\nu).$$
Using (b) of \thref{mag?}, we have $\sigma_{\nu}((1+\varepsilon))=(1+\varepsilon)\nu(\|D\|)$, so 
$$ \nu(\mathfrak{q})\ge(1+\varepsilon)\nu(\|D\|)-A_{X,\Delta}(\nu)>\nu(\|D\|)-A_{X,\Delta}(\nu).$$
Furthermore, $\nu(\|D\|)\ne 0$ in this context (otherwise the inequality would be trivial). Then by \thref{mag}, it follows that $\mathfrak{q}\subseteq \mathcal{J}(X,\Delta;\|D\|)$. Hence we have one containment:
$$ \mathcal{J}_{\sigma}(X,\Delta;\|D\|)\subseteq \mathcal{J}(X,\Delta;\|D\|).$$

\medskip

\noindent \emph{(2) From $\mathcal{J}\subseteq \mathcal{J}_{\sigma}$} 

Let $A$ be an ample divisor on $X$. By \thref{KR}, for every $0<\varepsilon\ll 1$, we know 
$$\mathcal{J}(X,\Delta;\|D\|)=\mathcal{J}(X,\Delta;\|(1+\varepsilon)D\|).$$
Thus, applying \thref{mag} again shows that if $\mathfrak{q}\subseteq \mathcal{J}(X,\Delta;\|D\|)$, we must have
$$ \nu(\mathfrak{q})>\nu(\|(1+\varepsilon)D\|)-A_{X,\Delta}(\nu)\,\,\text{ for all }\nu\in \mathrm{Val}^*_X.$$
Now, pick an even smaller $0<\varepsilon'\ll \varepsilon\ll 1$. Then
$$ \nu(\mathfrak{q})>\nu(\|(1+\varepsilon)D+\varepsilon' A\|)-A_{X,\Delta}(\nu)\,\,\text{ for all }\nu\in \mathrm{Val}^*_X,$$
so by \thref{mag} once more,
$$ \mathfrak{q}\subseteq \mathcal{J}(X,\Delta;\|(1+\varepsilon)D+\varepsilon' A\|)$$
Finally, by \thref{rmk}, $\mathfrak{q}\subseteq \mathcal{J}_{\sigma}(X,\Delta;\|D\|)$. Thus the reverse containment is established.
\end{proof}

\begin{proof}[Proof of \thref{imain2}]
By \cite[Theorem 1.2 and Corollary 5.2]{CD13}, for a big divisor $D$, we have
$$ \B_-(D)=\bigcup_mZ(\mathcal{J}(X,\Delta;\|mD\|))_{\mathrm{red}}.$$

Thus, by \thref{diminished},
$$
\begin{aligned}
\B_-(D)&=\bigcup_m \B_-\left(D+\frac{1}{m}A\right)
\\ &=\bigcup_m \bigcup_{m'}Z\left(\mathcal{J}\left(X,\Delta;\left\|m'D+\frac{m'}{m}A\right\|\right)\right)_{\mathrm{red}}
\\ &=\bigcup_{m'}\bigcup_mZ\left(\mathcal{J}\left(X,\Delta;\left\|m'D+\frac{m'}{m}A\right\|\right)\right)_{\mathrm{red}}
\\ &=\bigcup_{m'}Z(\mathcal{J}_-(X,\Delta;\|D\|))_{\mathrm{red}}.
\end{aligned}
$$
Moreover, the containments
$$ \mathcal{J}_{\sigma}(X,\Delta;\|(m+1)D\|)\subseteq \mathcal{J}_-(X,\Delta;\|(m+1)D\|)\subseteq \mathcal{J}_{\sigma}(X,\Delta;\|mD\|)$$
yield the conclusion.
\end{proof}

\begin{proof}[Proof of \thref{imain3}]
First, suppose that $D$ is big. In this case, $\mathcal{J}_\sigma(X,\Delta;\|D\|)$ agrees with $\mathcal{J}(X,\Delta;\|D\|)$ by \thref{imain1}, and the vanishing statement follows directly from \thref{Nadel}. Hence, the result holds when $D$ is big.

We now proceed by induction on $\dim X-\kappa_{\sigma}(D)$. Let $H$ be a very ample divisor on $X$, and let $H'\in |H|$ be a very general element of $|H|$. By \cite[Theorem 9.5.16]{Laz04}, we have:
$$ \mathcal{J}_{\sigma}(X,\Delta;\|D\|)|_{H'}=\mathcal{J}_{\sigma}(H',\Delta|_{H'};\|D|_{H'}\|).$$
Since $(X,\Delta)$ is klt, the pair $(H',\Delta|_{H'})$ is klt by the adjunction formula (cf. \cite[Lemma 5.17]{KM98}). 

Consider the following exact sequence
$$ 
\begin{aligned}
    0\to \O_X(L)\otimes \mathcal{J}_{\sigma}(X,\Delta;\|D\|)&\to \O_X(L+H)\otimes \mathcal{J}_{\sigma}(X,\Delta;\|D\|)
    \\ &\to \O_{H'}((L+H)|_{H'})\otimes \mathcal{J}_{\sigma}(H',\Delta|_{H'};\|D|_{H'}\|)\to 0.
\end{aligned}$$
The exactness of the left holds because the support of $\mathcal{T}or^{\O_X}_1(\mathcal{J}_{\sigma}(H',\Delta|_{H'};\|D|_{H'}\|),\O_{H'})$ is contained in $H'$. Consequently, the natural map
$$\mathcal{T}or^{\O_X}_1(\mathcal{J}_{\sigma}(H',\Delta|_{H'};\|D|_{H'}\|),\O_{H'}((L+H)|_{H'})\to \mathcal{J}_{\sigma}(X,\Delta;\|D\|)\otimes \O_X(L)$$
is zero. 

By \cite[2.7 Proposition (5) in Chapter V]{Nak04}, we have $\kappa_{\sigma}(D)\le \kappa_{\sigma}(D|_{H'})$. Moreover, by the adjunction formula (\cite[(4.2.9), (4.2.10), and Proposition 4.5 (1)]{Kol13}), we see $(L+H)|_{H'}\sim_{\Q}K_{H'}+\Delta|_{H'}+D|_{H'}$ if we use the adjunction theorem. Thus, when we take cohomology of the above short exact sequence, we can apply the induction hypothesis to the restricted divisor $D|_{H'}$, and then use \thref{Nadel} to the middle cohomology. This completes the inductive step and hence the proof.
\end{proof}

Let us prove an important corollary that is used to prove \thref{ithm}.

\begin{coro}\thlabel{corocoro}
Let $(X,\Delta)$ be a $\Q$-factorial projective klt pair such that $\kappa_{\sigma}(K_X+\Delta)\ge \dim X-1$. Suppose every irreducible component of $\B_-(K_X+\Delta)$ has dimension at most $1$. Then for any positive integer $m$ such that $m(K_X+\Delta)$ is Cartier,
$$ H^i(X,\O_X(m(K_X+\Delta)))=0\text{ for all }i\ge 2.$$
\end{coro}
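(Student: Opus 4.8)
The plan is to derive the vanishing formally from \thref{imain2} and \thref{imain3}, applied to the $\Q$-divisor $D:=(m-1)(K_X+\Delta)$ and the Cartier divisor $L:=m(K_X+\Delta)$, which satisfy $L=K_X+\Delta+D$. First I would note that $\kappa_{\sigma}(K_X+\Delta)\ge\dim X-1\ge 0$ forces $K_X+\Delta$ to be pseudo-effective, hence so is $D$; and, since $\kappa_{\sigma}$ is unchanged under multiplication by a positive rational (cf.\ \cite[Chapter~V]{Nak04}), one has $\kappa_{\sigma}(D)=\kappa_{\sigma}(K_X+\Delta)\ge\dim X-1$ as soon as $m\ge 2$. (I would treat only $m\ge 2$: this is all that is needed for \thref{ithm}, which invokes the corollary through the even multiples $2m$; for $m=1$ one has $D=0$, so \thref{imain3} gives nothing, and the bare statement would even fail, e.g.\ on $X=E\times C$ with $E$ elliptic and $C$ of genus $\ge 2$.) Since $X$ is $\Q$-factorial, $\Delta$ is $\Q$-Cartier, so \thref{imain3} applies to $L$ and $D$ and yields
$$H^i\!\left(X,\O_X(L)\otimes\mathcal{J}_{\sigma}(X,\Delta;\|D\|)\right)=0\qquad\text{for }i>\dim X-\kappa_{\sigma}(D);$$
as $\dim X-\kappa_{\sigma}(D)\le 1$, this is precisely vanishing in all degrees $i\ge 2$.

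Next I would bound the cosupport of the diminished multiplier ideal. Keeping the $m'=1$ term of the union in \thref{imain2}, and using the scale-invariance $\B_-(cD)=\B_-(D)$ for $c\in\Q_{>0}$, the closed subscheme $Z:=Z(\mathcal{J}_{\sigma}(X,\Delta;\|D\|))$ satisfies $Z_{\mathrm{red}}\subseteq\B_-(D)=\B_-(K_X+\Delta)$. By hypothesis every irreducible closed subvariety contained in $\B_-(K_X+\Delta)$ lies in one of its irreducible components, hence has dimension $\le 1$; therefore $\dim Z\le 1$.

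Finally I would tensor the structure sequence of $Z$ with $\O_X(L)$,
$$0\to\O_X(L)\otimes\mathcal{J}_{\sigma}(X,\Delta;\|D\|)\to\O_X(L)\to\O_X(L)\otimes\left(\O_X/\mathcal{J}_{\sigma}(X,\Delta;\|D\|)\right)\to 0,$$
and pass to the long exact cohomology sequence. The quotient sheaf is supported on $Z$, so by Grothendieck vanishing its cohomology vanishes in degrees $>\dim Z$, in particular for $i\ge 2$; combined with the vanishing of the left-hand term for $i\ge 2$ established above, this gives $H^i(X,\O_X(L))=0$ for all $i\ge 2$. I do not expect a real obstacle here: the substance is entirely in \thref{imain2} and \thref{imain3}, and the only things requiring a little care are the two scaling invariances, the use of $\Q$-factoriality to know $\Delta$ is $\Q$-Cartier (as required by \thref{imain3}), and the restriction to $m\ge 2$.
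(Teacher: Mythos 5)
Your proof is essentially the paper's: set $D:=(m-1)(K_X+\Delta)$ and $L:=m(K_X+\Delta)$, apply \thref{imain3} to get $H^i(X,\O_X(L)\otimes\mathcal{J}_\sigma(X,\Delta;\|D\|))=0$ for $i\ge 2$, use \thref{imain2} together with the hypothesis on $\B_-(K_X+\Delta)$ to bound $\dim Z(\mathcal{J}_\sigma(X,\Delta;\|D\|))_{\mathrm{red}}\le 1$, kill the quotient term by dimension, and conclude from the long exact sequence of the structure sequence tensored with $\O_X(L)$. The details you spell out (that $\Q$-factoriality is what makes $\Delta$ a $\Q$-Cartier divisor, so \thref{imain3} applies; that $\kappa_\sigma$ and $\B_-$ are invariant under rational rescaling) are all implicit in the paper's argument.

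The one genuine point you add is the restriction to $m\ge 2$. The paper's proof tacitly uses $\kappa_\sigma((m-1)(K_X+\Delta))\ge\dim X-1$, which fails when $m=1$ since then $D=0$ and $\kappa_\sigma(0)=0$, so \thref{imain3} only gives vanishing for $i>\dim X$. Your counterexample $X=E\times C$ (with $E$ elliptic, $C$ of genus $\ge 2$, $\Delta=0$) satisfies all hypotheses of the corollary --- $K_X$ is nef so $\B_-(K_X)=\varnothing$, and $\kappa_\sigma(K_X)=1=\dim X-1$ --- yet $H^2(X,\O_X(K_X))=H^0(X,\O_X)^\ast\ne 0$. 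So the statement as written really does fail at $m=1$, and the corollary should be stated with $m\ge 2$ (which is harmless: in the proof of \thref{ithm} it is only invoked for the Cartier index $2m\ge 2$).
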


\begin{proof}
Since $\kappa_{\sigma}(K_X+\Delta)\ge \dim X-1$ and $m(K_X+\Delta)=K_X+\Delta+(m-1)(K_X+\Delta)$, by \thref{imain3}, we get
$$ H^i(X,\O_X(m(K_X+\Delta))\otimes \mathcal{J}_{\sigma}(X,\Delta;\|(m-1)(K_X+\Delta)\|))=0\text{ for }i\ge 2.$$

Next, by \thref{imain2}, 
$$Z(\mathcal{J}_{\sigma}(X,\Delta;\|(m-1)(K_X+\Delta)\|))_{\mathrm{red}}\subseteq \B_-(K_X+\Delta).$$ By hypothesis, $\dim Z(\mathcal{J}_{\sigma}(X,\Delta;\|(m-1)(K_X+\Delta)\|))_{\mathrm{red}}\le 1$. Hence
$$ H^i(X,\O_X(m(K_X+\Delta))\otimes \O_X/\mathcal{J}_{\sigma}(X,\Delta;\|(m-1)(K_X+\Delta)\|))=0\text{ for }i\ge 2.$$

Finally, consider the short exact sequence
$$ 0\to \mathcal{J}_{\sigma}(X,\Delta;\|(m-1)(K_X+\Delta)\|)\to \O_X\to \O_X/\mathcal{J}_{\sigma}(X,\Delta;\|(m-1)(K_X+\Delta)\|)\to 0,$$
tensor it with $\O_X(m(K_X+\Delta))$, and take cohomology. Since both of the higher cohomology groups we identified above vanish for $i\ge 2$, it follows that
$$ H^i(X,\O_X(m(K_X+\Delta)))=0\text{ for all }i\ge 2.$$
This concludes the proof.
\end{proof}

\section{Proof of the main theorem on termination of flips}

The goal of this section is to prove \thref{ithm}. 

\begin{proof}[Proof of \thref{ithm}]
Let us divide the proof into several steps.
\medskip

\noindent\textbf{Step 1}. For each $i$, let
$$
\begin{tikzcd}
X_{i-1}\ar[dashed,"f_i"]{rr}\ar["\varphi_{i1}"']{rd}& &X_i \ar["\varphi_{i2}"]{ld}\\
& Z_i&
\end{tikzcd}
$$
be the flipping diagram of $f_i$. By \thref{KRss}, there exist common resolutions $g_{i1}:X'_i\to X_{i-1}$, and $g_{i2}:X'_i\to X_i$ such that the following diagram commutes:
$$
\begin{tikzcd}
& X'_1\ar["g_{11}"']{ld}\ar["g_{12}"]{rd}& & X'_2\ar["g_{21}"']{ld}\ar["g_{22}"]{rd}& & & & X'_n \ar["g_{n1}"']{ld}\ar["g_{n2}"]{rd}&\\
X_0\ar[dashed,"f_1"]{rr}\ar["\varphi_{11}"']{rd}& &X_1\ar["\varphi_{12}"]{ld}\ar[dashed,"f_2"]{rr}\ar["\varphi_{21}"']{rd}& &X_2\ar["\varphi_{22}"]{ld}& \cdots & X_{n-1} \ar["\varphi_{n1}"']{rd}\ar[dashed,"f_n"]{rr}& & X_n.\ar["\varphi_{n2}"]{ld}\\
& Z_1& & Z_2& & & & Z_n &
\end{tikzcd}
$$
Furthermore, by \thref{forapp}, we have $\kappa_{\sigma}(K_{X_i}+\Delta_i)\ge \dim X-1$ for every $i$, and by \thref{diminished2}, each irreducible component of $\B_-(K_{X_i}+\Delta_i)$ has dimension at most $1$.

\medskip

\noindent\textbf{Step 2}. By (b), there is a positive integer $m$ such that $m(K_{X_i}+\Delta_i)$ is Cartier for each $i$. Set
$$ D_i:=2m(K_{X_i}+\Delta_i)\text{ and }d_i:=h^i(X_i,\O_{X_i}(D_i)).$$
Since $X$ has rational singularities (cf. \cite[Theorem 5.22]{KM98}), the Leray spectral sequence shows that
$$ H^s(X_{i-1},\O_{X_{i-1}}(D_{i-1}))=H^s(X'_i,\O_{X'_i}(g^*_{i1}D_{i-1}))$$
By the Negativity lemma, there is an effective $g_{i2}$-exceptional divisor $E_i$ on $X'_i$ such that
\begin{equation}\label{2} 
g^*_{i1}D_{i-1}=g^*_{i2}D_i+E_i
\end{equation}
Next, consider the Leray spectral sequence
$$ E^{st}_2=H^s(X_i,R^tg_{i2*}\O_{X'_i}(g^*_{i1}D_{i-1}))\implies H^{s+t}(X'_i,\O_{X'_i}(g^*_{i1}D_{i-1})).$$
An inspection of its second page shows there is a natural injection
$$ 
\begin{aligned}
H^1(X_i,\O_{X_i}(D_i))&=H^1(X_i,g_{i2*}\O_{X'_i}(D_{i-1}))
\\ &\to H^1(X'_i,\O_{X'_i}(g^*_{i1}D_{i-1}))=H^1(X_{i-1},\O_{X_{i-1}}(D_{i-1})),
\end{aligned}$$
Hence we conclude that $d_i\le d_{i-1}$ for all $i$.

\medskip

\noindent\textbf{Step 3}. Assume that the MMP does not terminate. Since $\{d_i\}$ is a non-increasing sequence of positive integers, for sufficiently large $i_0$ we must have $d_{i_0-1}=d_{i_0}$.

Next, consider the following Leray spectral sequences:
$$
\begin{aligned}
{}^{'}E^{st}_2&=H^s(Z_{i_0},R^t\varphi_{i_01*}\O_{X_{i_0-1}}(D_{i_0-1}))\implies H^{s+t}(X_{i_0-1},\O_{X_{i_0-1}}(D_{i_0-1})),\text{ and } \\
{}^{''}E^{st}_2&=H^s(Z_{i_0},R^t(\varphi_{i_01}\circ g_{i_01})_*\O_{X'_{i_0-1}}(D_{i_0-1}))\implies H^{s+t}(X'_{i_0},\O_{X'_{i_0}}(g^*_{i_01}D_{i_0-1})).
\end{aligned}
$$
Because $X$ has rational singularities (cf. \cite[Theorem 5.22]{KM98}), the natural map 
$$\O_{X_{i_0-1}}(D_{i-1})\cong \O_{X'_{i_0}}(g^*_{i_01}D_{i_0-1})$$
induces isomorphisms ${}^{'}E^{st}_2\cong {}^{''}E^{st}_2$ for all $s,t$. Thus, these two spectral sequences are isomorphic. From the resulting edge map
$$ \alpha_{1}:H^1(Z_{i_0},\varphi_{i_01*}\O_{X_{i_0-1}}(D_{i_0-1}))\to H^1(X_{i_0-1},\O_{X_{i_0-1}}(D_{i_0-1}))$$
we can construct an injection
$$ \alpha_{2}:H^1(Z_{i_0},(\varphi_{i_02}\circ g_{i_02})_*\O_{X'_{i_0}}(g^*_{i_01}D_{i_0-1}))\to H^1(X_{i_0-1},\O_{X_{i_0-1}}(D_{i_0-1})),$$
using the commutative diagram from earlier steps. By \eqref{2}, this in turn yields
$$ \alpha_{3}:H^1(Z_{i_0},\varphi_{i_02*}\O_{X_{i_0}}(D_{i_0}))\to H^1(X_{i_0-1},\O_{X_{i_0-1}}(D_{i_0-1})).$$
Observe that 
$$D_{i_0}=(K_{X_{i_0}}+\Delta_{i_0})+(D_{i_0}-(K_{X_{i_0}}+\Delta_{i_0}))$$
and 
$$D_{i_0}-(K_{X_{i_0}}+\Delta_{i_0})=(2m-1)(K_{X_{i_0}}+\Delta_{i_0})$$
which is $\varphi_{i_02}$-ample. By the relative Kawamata-Viehweg vanishing theorem (cf. \cite[Theorem 1-2-3]{KMM87}) and the Leray spectral sequence, we obtain an injection
$$ \alpha_{4}:H^1(X_{i_0},\O_{X_{i_0}}(D_{i_0}))\to H^1(X_{i_0-1},\O_{X_{i_0-1}}(D_{i_0-1})).$$
Moreover, $\alpha_1$ is an isomorphism if and only if $\alpha_4$ is an isomorphism. Since $d_{i_0-1}=d_{i_0}$, we conclude that $\alpha_4$, and hence $\alpha_1$ is indeed an isomorphism.

\medskip

\noindent\textbf{Step 4}. The first four terms of the five-term exact sequence of ${}^{'}E^{st}_2$ are
$$ 
\begin{aligned}
0&\to H^1(Z_{i_0},\varphi_{i_01*}\O_{X_{i_0-1}}(D_{i_0-1}))\overset{\alpha_{1}}{\to}H^1(X_{i_0-1},\O_{X_{i_0-1}}(D_{i_0-1})) \\ &\to H^0(Z_{i_0},R^1\varphi_{i_01*}\O_{X_{i_0-1}}(D_{i_0-1}))\to H^2(Z_{i_0},\varphi_{i_01*}\O_{X_{i_0-1}}(D_{i_0-1})).
\end{aligned}
$$
Arguing as in Step 3, we compute:
$$
\begin{aligned}
H^2(Z_{i_0},\varphi_{i_01*}\O_{X_{i_0-1}}(D_{i_0-1}))&=H^2(Z_{i_0},(\varphi_{i_01}\circ g_{i_01})_*\O_{X'_{i_0}}(g^*_{i_01}D_{i_0-1}))
\\ &\overset{(1)}{=}H^2(Z_{i_0},(\varphi_{i_02}\circ g_{i_02})_*\O_{X'_{i_0}}(g^*_{i2}D_{i_0}+E_{i_0}))
\\ &=H^2(Z_{i_0},\varphi_{i_02*}\O_{X_{i_0}}(D_{i_0}))
\\ &\overset{(2)}{=}H^2(X_{i_0},\O_{X_{i_0}}(D_{i_0}))\overset{(3)}{=}0,
\end{aligned}$$
where $(1)$ follows from the commutative diagram and \eqref{2}, $(2)$ follows from the relative Kawamata--Viehweg vanishing theorem combined with the Leray spectral sequence, and $(3)$ holds by the argument in the last sentence of Step 1, $(c)$, together with \thref{corocoro}.

Since $\alpha_1$ is an isomorphism, it follows that
$$ H^0(Z_{i_0},R^1\varphi_{i_01*}\O_{X_{i_0-1}}(D_{i_0-1}))=0.$$
By (a), the map $f_{i_0}$ is a flip of type $(1,\dim X-2)$. Hence the support of
$$R^1\varphi_{i_01*}\O_{X_{i_0-1}}(D_{i_0-1})$$
is zero-dimensional, which forces 
$$R^1\varphi_{i_01*}\O_{X_{i_0-1}}(D_{i_0-1})=0.$$

\medskip

\noindent\textbf{Step 5}. In this step, we show that there exists a rational curve $C\subseteq X_{i_0-1}$ such that
$$H^1(C,\O_{X_{i_0-1}}(D_{i_0-1})|_C)=0.$$
By the Cone Theorem (cf. \cite[Theorem 1]{Kaw91}), there is a rational curve $C\subseteq X_{i_0-1}$ that is contracted by $\varphi_{i_01}$. Let $x:=\varphi_{i_01}(C)$, and let $E:=\varphi^{-1}_{i_01}(x)$ be the scheme-theoretic fiber of $\varphi_{i_01}$ over $x$. Since $C\subseteq E\subseteq X_{i_0-1}$, we can construct a coherent sheaf $\mathcal{F}$ on $X_{i_0-1}$ fitting into an exact sequence
$$ 0\to \mathcal{F}\to \O_{X_{i_0-1}}(D_{i_0-1})\to \O_{X_{i_0-1}}(D_{i_0-1})|_C\to 0.$$
Because every fiber of $\varphi_{i_01}$ is $1$-dimensional, we have $R^2\varphi_{i_01*}\mathcal{F}=0$. Consequently, thee is a surjection
$$ 0=R^1\varphi_{i_01*}\O_{X_{i_0-1}}(D_{i_0-1})\twoheadrightarrow  R^1\varphi_{i_01*}\O_{X_{i_0-1}}(D_{i_0-1})|_C,$$
whoch implies
$$ H^1(C,\O_{X_{i_0-1}}(D_{i_0-1})|_C)=R^1\varphi_{i_01*}\O_{X_{i_0-1}}(D_{i_0-1})|_C=0.$$

\medskip

\noindent\textbf{Step 6}. Since $-(K_{X_{{i_0}-1}}+\Delta_{i_0-1})$ is $\varphi_{i_01}$-ample, it follows that 
$$\mathcal{L}:=\O_{X_{i_0-1}}(m(K_{X_{i_0-1}}+\Delta_{i_0-1}))|_C$$
is an anti-ample line bundle on $C$. Let $\nu:\P^1\to C$ be the normalization. As $\nu$ is finite and pullback of an ample line bundle along a finite morphism remains ample (cf. \cite[Lemma 0B5V]{Stacks}), we see that $\nu^*\mathcal{L}$ is anti-ample on $\P^1$. Hence $\O_{\P^1}(-\ell)=\nu^*\mathcal{L}$ for some positive integer $\ell$. Recalling that
$$ D_{i_0-1}=2m(K_{X_{i_0-1}}+\Delta_{i_0-1}),$$
we obtain $\O_{X_{i_0-1}}(D_{i_0-1})|_C=\O_{\P^1}(-2\ell)$. Moreover, since any finite morphism has vanishing higher direct images (cf. \cite[Lemma 02OE]{Stacks}), the Leray spectral sequence gives
$$ H^1(\P^1,\O_{\P^1}(-2\ell))=H^1(C,\O_{X_{i_0-1}}(D_{i_0-1})|_C)=0,$$
which contradicts the well-known fact that
$$H^1(\P^1,\O_{\P^1}(-N))\ne 0\text{ for all }N\ge 2.$$
This contradiction forces the MMP to terminate, thereby completing the proof of the assertion.
\end{proof}

\begin{rmk}
It is worth noting that Step 5 and 6 in the proof of \thref{ithm} are similar to the proof of \cite[Theorem 1]{Bro99}.
\end{rmk}

\end{document}